\begin{document}


\title{Average-case and smoothed analysis of graph isomorphism}
\author{
    Julia Gaudio\thanks{Northwestern University; \url{julia.gaudio@northwestern.edu}.}
    \and
    Mikl\'os Z.\ R\'acz\thanks{Northwestern University; \url{miklos.racz@northwestern.edu}.} 
    \and
    Anirudh Sridhar\thanks{MIT; \url{anisri@mit.edu}.} 
}
\date{\today}

\maketitle


\begin{abstract}
We propose a simple and efficient local algorithm for graph isomorphism which succeeds for a large class of sparse graphs. This algorithm produces a low-depth canonical labeling, which is a labeling of the vertices of the graph that identifies its isomorphism class using vertices' local neighborhoods.

Prior work by Czajka and Pandurangan showed that the degree profile of a vertex (i.e., the sorted list of the degrees of its neighbors) gives a canonical labeling with high probability when $n p_n = \omega( \log^{4}(n) / \log \log n )$ (and $p_{n} \leq 1/2$); subsequently, Mossel and Ross showed that the same holds when $n p_n = \omega( \log^{2}(n) )$. We first show that their analysis essentially cannot be improved: we prove that when $n p_n = o( \log^{2}(n) / (\log \log n)^{3} )$, with high probability there exist distinct vertices with isomorphic $2$-neighborhoods. Our first main result is a positive counterpart to this, showing that $3$-neighborhoods give a canonical labeling when $n p_n \geq (1+\delta) \log n$ (and $p_n \leq 1/2$); 
this improves a recent result of Ding, Ma, Wu, and Xu, completing the picture above the connectivity threshold. 

Our second main result is a smoothed analysis of graph isomorphism, showing that for a large class of deterministic graphs, a small random perturbation ensures that $3$-neighborhoods give a canonical labeling with high probability. 
While the worst-case complexity of graph isomorphism is still unknown, this shows that graph isomorphism has polynomial smoothed complexity.
\end{abstract}


\section{Introduction} \label{sec:intro} 
Graph isomorphism is a fundamental problem in computer science. Given a pair of graphs $G$ and $H$ on $n$ vertices, the graph isomorphism problem is to decide whether $G$ and $H$ are isomorphic. Curiously, graph isomorphism has neither been shown to be polynomial-time solvable, nor to be NP-complete. Assuming $\mathrm{P} \neq \mathrm{NP}$, the problem is believed to lie in the so-called NP-intermediate complexity class~\cite{Schoning1988}. 
Recently, Babai gave a quasi-polynomial time algorithm for graph isomorphism~\cite{Babai2016} (see also the expository papers~\cite{Helfgott2017,babai2018group}).

The focus of this paper is on \emph{canonical labeling} algorithms for graph isomorphism.
A canonical labeling of a graph is an assignment of a unique label to each vertex which identifies its isomorphism class. More precisely, a labeling, denoted $f(\cdot)$, is a mapping on graphs. Given a graph $G$, the labeling $f(G)$ produces a graph which is isomorphic to $G$. A labeling is canonical if the following property is satisfied: whenever $G$ and $H$ are isomorphic, then the graphs $f(G)$ and $f(H)$ are identical. It is well-known that such a labeling can be used to solve the graph isomorphism problem~\cite{babai1979canonical, babai1980random, babai1983canonical, karp1979probabilistic, lipton1978beacon, babai1980complexity}, 
which has motivated the study of canonical labeling algorithms for certain classes of graphs~\cite{babai1983canonical, lipton1978beacon, babai1980complexity} and for random graphs~\cite{babai1980random, babai1979canonical, karp1979probabilistic, lipton1978beacon}.

The study of canonical labeling algorithms for random graphs was initiated by Babai, Erd\H{o}s, and Selkow~\cite{babai1980random}, who gave a simple $O(n^{2})$ time canonical labeling algorithm for $G(n,1/2)$ which succeeds with high probability\footnote{Throughout the paper we say that an event holds with high probability if it holds with probability tending to $1$ as $n\to\infty$.}. Following (the circulation of) \cite{babai1980random}, several works strengthened this result~\cite{karp1979probabilistic,lipton1978beacon,babai1979canonical}. 
Since $G(n,1/2)$ can be viewed as a uniformly random sample from the set of all (labeled) graphs on $n$ vertices, this result implies that almost all graphs admit an efficiently computable canonical labeling. 
Our paper contributes to the understanding of efficient canonical labeling in two ways:
\begin{enumerate}
    \item We show that \emph{sparse} graphs admit an efficiently computable canonical labeling with high probability. In parallel to the line of work on $G(n,1/2)$, we study canonical labeling of $G(n,p_n)$ where $p_n = o(1)$, 
    and give an efficient local canonical labeling algorithm above the connectivity threshold $p_{n} \geq (1+\delta) \log(n) / n$. 
    \item Taking a smoothed analysis perspective, we consider canonical labeling of randomly perturbed deterministic graphs. In particular, we show that for every \emph{deterministic} graph among a large class of graphs (which includes all graphs with degree bounded by $n^{c}$ for $c < 1/2$), a sparse random perturbation ensures an efficiently computable canonical labeling.
\end{enumerate}

\subsection{Canonical labeling of sparse random graphs} 
The study of canonical labeling of sparse random graphs was initiated by 
Bollob\'as~\cite[Theorem 3.17]{bollobas2001random}, 
showing that 
an extension of the algorithm in~\cite{babai1980random} provides a canonical labeling of $G(n,p_n)$ with high probability when  
$p_{n} = \omega( \log(n) / n^{1/5})$ 
and $p_{n} \leq 1/2$. 
In a separate work~\cite{Bollobas1982}, Bollob\'as gave an efficient, high probability canonical labeling algorithm when 
$p_{n} \geq (\log(n) + \psi_{n})/ n$, 
where $\psi_{n} \to \infty$, 
and 
$p_{n} \leq 2n^{-11/12}$. 
Note that this algorithm works all the way down to the connectivity threshold of $G(n, p_n)$. 
Canonical labeling is also possible below the connectivity threshold; Linial and Moshieff \cite{Linial2017} showed that it is efficiently achievable with high probability when $p_n = \omega(1/n)$ and $p_n \leq 1/2$. When $p_n = o(1/n)$, canonical labeling is also possible since $G(n,p_n)$ is a collection of trees with high probability, and trees admit an efficient canonical labeling algorithm \cite{Read1972}. 

The aforementioned canonical labeling algorithms \cite{babai1980random,karp1979probabilistic,lipton1978beacon,babai1979canonical, bollobas2001random, Bollobas1982} crucially leverage \emph{global} information about the graph (e.g., locations of high degree vertices). In contrast, the focus of our work is on \emph{local} canonical labeling algorithms, where the label of each vertex depends only on its low-depth neighborhood. Our focus is on the sparse regime where $p_n = n^{-1 + o(1)}$ (though some of our results hold more broadly). 
Czajka and Pandurangan~\cite{Czajka2008} developed a high probability canonical labeling algorithm from 2-neighborhoods\footnote{For any $r \in \mathbb{N}$, the $r$-neighborhood of $v$ in $G$ is the induced subgraph on all vertices at distance at most $r$ from $v$.} in the regime where $n p_{n} = \omega ( \log^{4}(n) / \log \log n )$ 
and $p_{n} \leq 1/2$. 
Their algorithm is simple and effective: 
to each vertex $v$ it assigns 
the sorted list of the degrees of $v$'s neighbors. 
Mossel and Ross~\cite{Mossel2019} improved the analysis of the same algorithm, 
showing that it works whenever 
$n p_{n} = \omega( \log^{2}(n) )$ 
and $p_{n} \leq 1/2$.

Our first result identifies a regime in which depth-$2$ neighborhoods are not unique,
showing that the analysis of~\cite{Mossel2019} essentially cannot be improved further. 

\begin{theorem}\label{thm:2nbrhood_impossibility}
Let $G \sim G(n,p_{n})$ and assume that $n p_{n} = o ( \log^{2}(n) / \left( \log \log n \right)^{3})$. 
Then, with high probability, 
there exist distinct vertices $u, v \in V(G)$ such that 
the $2$-neighborhoods of $u$ and $v$ are isomorphic. 
\end{theorem}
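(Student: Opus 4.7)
\emph{Approach.} My plan is a pigeonhole argument on ``types'' of $2$-neighborhoods: show that a $(1-o(1))$ fraction of vertices have $2$-neighborhoods that are simple trees encoded by a small amount of data (a degree and a multiset of neighbor-degrees), and that the total number of such types is $n^{o(1)}$. Concretely, set $\mu := np_n$ and $s := 2\sqrt{\mu \log \mu}$, let $I := [\mu - s, \mu + s]$, and call $v$ \emph{typical} if (i) its $2$-neighborhood $B_2(v)$ is a tree (no edge among vertices of $N(v)$, each distance-$2$ vertex has a unique parent in $N(v)$, no edge within the distance-$2$ shell), and (ii) every vertex of $B_2(v)$ has $G$-degree in $I$. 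For typical $v$, the induced subgraph $B_2(v)$ is determined, as an abstract graph, by $\tau(v) := (\deg v,\, \{\deg u : u \sim v\})$, with the second coordinate viewed as a multiset. Any two typical vertices with $\tau(u) = \tau(v)$ therefore have isomorphic $2$-neighborhoods, so it suffices to produce two typical vertices sharing a type.

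\emph{Most vertices are typical.} A Bernstein/Chernoff bound gives $\Pr(|\deg u - \mu| > s) \leq 2 \mu^{-2}$ for any fixed vertex $u$; union-bounding over $v$ and its at most $\mu + s$ neighbors shows (ii) fails with probability $O(1/\mu)$. A first-moment computation bounds the expected number of ``excess'' edges in $B_2(v)$ (edges within $N(v)$, multi-parent distance-$2$ vertices, or edges in the distance-$2$ shell) by $O(\mu^5/n)$, which is $o(1)$ under the hypothesis. Hence each vertex is typical with probability $1 - o(1)$, and a Markov argument on the number of non-typical vertices yields $(1-o(1))n$ typical vertices with high probability.

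\emph{Counting types and conclusion.} The first coordinate of $\tau$ has at most $2s+1$ values; the multiset in the second coordinate has at most $\mu+s$ elements from the $(2s+1)$-element alphabet $I$, so there are at most $\binom{\mu+3s}{2s} \leq (e\mu/(2s))^{2s}$ such multisets. Therefore
\[
\log \bigl|\{\tau(v) : v \text{ typical}\}\bigr| = O\bigl(s \log(\mu/s)\bigr) = O\bigl(\sqrt{\mu}\,(\log \mu)^{3/2}\bigr).
\]
Since $\log \mu \asymp \log\log n$ and $\mu = o(\log^2 n / (\log\log n)^3)$, the right-hand side is $o(\log n)$, so there are only $n^{o(1)}$ possible types. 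Pigeonhole then produces two distinct typical vertices sharing a type, hence with isomorphic $2$-neighborhoods.

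\emph{Main obstacle.} The delicate step is calibrating the window width $s$: shrinking $s$ reduces the type count but inflates the probability that some neighbor's degree falls outside $I$, while enlarging $s$ does the reverse. The scaling $s \asymp \sqrt{\mu \log \mu}$ is forced by the Chernoff tail surviving a union bound over $\approx \mu$ neighbors (requiring $s^2/\mu \gtrsim \log \mu$), and substituting this into the counting estimate is exactly what produces the $(\log \log n)^3$ factor in the hypothesis; tightening either step would be needed to improve the theorem.
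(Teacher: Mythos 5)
Your proposal follows essentially the same route as the paper's proof: restrict attention to vertices whose $2$-neighborhood is a tree and whose degree (and neighbors' degrees) lie in a window of width $\Theta(\sqrt{np_n \log\log n})$ around the mean, note that such a neighborhood is determined by the multiset of neighbor degrees, count these multisets (stars-and-bars) to get $n^{o(1)}$ possible types, and pigeonhole; your exponent $O(\sqrt{\mu}(\log\mu)^{3/2}) = o(\log n)$ is exactly the paper's bound on the number of degree profiles of ``good'' vertices.

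Two loose ends need repair, both minor. First, your condition (ii) requires \emph{every} vertex of $B_2(v)$ --- including the roughly $\mu^2$ distance-two vertices --- to have degree in $I$, but your union bound only covers $v$ and its $\approx \mu$ neighbors; over $\mu^2$ vertices with tail $2\mu^{-2}$ the bound degenerates to $O(1)$, so (ii) as stated is not shown to hold typically. The fix is to weaken (ii) to $v$ and its neighbors only, which is all that the type-determination step uses (the $G$-degrees of distance-two vertices do not affect the isomorphism class of the induced tree); this is what the paper's definition of good vertices does. Second, your argument needs $\mu = np_n \to \infty$: the failure probability $O(1/\mu)$ is useless for bounded $\mu$, and $s = 2\sqrt{\mu\log\mu}$ is ill-defined for $\mu \le 1$, whereas the hypothesis allows $np_n$ bounded or vanishing. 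The paper disposes of this regime in one line: when $np_n \le \tfrac12 \log n$ there are with high probability several isolated vertices, whose (trivial) $2$-neighborhoods are isomorphic. Adding that reduction makes your argument complete.
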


Recent independent and concurrent work by Johnston, Kronenberg, Roberts, and Scott~\cite{johnston2022shotgun} also proves Theorem~\ref{thm:2nbrhood_impossibility} and shows that the loglog factors are indeed necessary.
While non-uniqueness of $2$-neighborhoods does not rule out the possibility of a depth-$2$ canonical labeling algorithm, it does rule out the possibility of using the isomorphism class of each vertex's $2$-neighborhood as a canonical label. Theorem~\ref{thm:2nbrhood_impossibility} naturally raises the question of whether there exists a local canonical labeling algorithm in this regime using a somewhat larger local neighborhood.\footnote{For $p_n \geq \sqrt{\frac{(2+\epsilon) \log n}{n}}$, the graph $G \sim G(n,p_n)$ has diameter $2$ with high probability~\cite{Bollobas1981}, so $2$-neighborhoods include the whole graph. However, for sufficiently sparser graphs, $2$-neighborhoods and $3$-neighborhoods are truly local.}
In particular, do $3$-neighborhoods provide a canonical labeling in this regime? 
Recent work of Ding, Ma, Wu, and Xu \cite{Ding2021} 
showed that this is the case when $np_n \geq C\log n$ for a sufficiently large constant~$C$.
Our next result completes the picture, showing that $3$-neighborhoods are sufficient (essentially) all the way down to the connectivity threshold.
\begin{theorem}\label{thm:3nbrhood_algorithm}
Fix $\delta > 0$ and let $p_{n}$ satisfy 
$n p_{n} \geq (1+\delta) \log(n)$ 
and $p_{n} \leq \frac{1}{2}$. 
Let $G \sim G(n,p_{n})$. 
Then, with high probability, 
for every pair of distinct vertices $u, v \in V(G)$, 
the $3$-neighborhoods of $u$ and $v$ are nonisomorphic. 
\end{theorem}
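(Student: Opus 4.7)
The plan is to define a 3-neighborhood invariant $\sigma(v)$ and show via a union bound over $\binom{n}{2}$ pairs that $\sigma(u) \neq \sigma(v)$ for all distinct pairs with high probability. For each vertex $v$, let $\sigma(v)$ be the sorted multiset of pairs $(\deg(w), D(w))$ ranging over $w \in N(v)$, where $D(w)$ denotes the sorted sequence of degrees of $w$'s neighbors in $G$. This invariant is determined by $N_3(v)$: for every $w \in N(v)$ and every neighbor $x$ of $w$, the vertex $x$ lies at distance $\leq 2$ from $v$, so all edges incident to $x$ are contained in $N_3(v)$, which means $\deg_G(x)$ is reconstructible from $N_3(v)$.

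I would begin with a standard degree-concentration reduction: since $np_n \geq (1+\delta)\log n$, a Chernoff bound gives that with probability $1-o(1)$ every vertex has degree in $[(1-\epsilon)np_n, (1+\epsilon)np_n]$ for arbitrarily small $\epsilon > 0$. After conditioning on this good event, I would split into cases based on $d_G(u,v)$. The main case is the \emph{far} regime $d_G(u,v) \geq 7$, where $N_3(u)$ and $N_3(v)$ are vertex-disjoint and the profiles involved are approximately independent; the \emph{close} regime $d_G(u,v) \leq 6$ only contributes $n^{1+o(1)}$ pairs (since each vertex has at most $(np_n)^6 = n^{o(1)}$ vertices within distance $6$ when $np_n = n^{o(1)}$) and can be handled similarly with some bookkeeping.

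For the far case, I would expose the randomness in stages: first reveal the vertex sets of $N_2(u)$ and $N_2(v)$ together with all edges contained in $N_2(u) \cup N_2(v)$; then reveal the edges from $N_2(u) \cup N_2(v)$ to the outside, which determine the remaining entries of each $D(w)$. The event $\sigma(u) = \sigma(v)$ forces a bijection $\phi : N(u) \to N(v)$ such that $\deg(u') = \deg(\phi(u'))$ and $D(u') = D(\phi(u'))$ for every $u' \in N(u)$. The key estimate is that for two independent typical degree profiles, the collision probability $\Pr(D(u') = D(\phi(u')))$, though not itself polynomially small in $n$, has enough anti-concentration that raising it to the $(np_n)$-th power yields super-polynomial decay. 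Combined with a careful accounting of bijections (using the sorted-multiset structure to avoid naively summing over all $(np_n)!$ permutations), this yields $\Pr(\sigma(u) = \sigma(v)) = n^{-\omega(1)}$, which easily survives the union bound.

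The principal obstacle is obtaining a sharp enough anti-concentration bound for the joint matching of all $np_n$ degree profiles simultaneously. A naive permutation union bound loses a factor of $(np_n)! = n^{\Theta(\log\log n)}$ at the connectivity threshold, so the per-profile collision bound must decay polynomially in $n$ with an exponent that can absorb this combinatorial overhead; achieving this likely requires combining the entropy from the degree matching and the profile matching, together with a local central limit theorem for sorted Binomial samples. The close-distance case also requires extra care because $N_3(u)$ and $N_3(v)$ share vertices, but there are still many \emph{private} branches of each 3-neighborhood to exploit, and the sparsity of such pairs provides ample slack for the union bound.
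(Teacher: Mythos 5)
Your overall skeleton (a depth-$3$ signature built from neighbors' profiles, a union bound over pairs $u,v$ and over bijections $N(u)\to N(v)$, and a staged edge-exposure to make the outside contributions to the profiles independent) matches the paper's in outline, but the pivotal step of your plan is missing, and the requirement you impose on it is actually unattainable in the regime of the theorem. You ask for the per-pair collision bound $\Pr\bigl(D(u')=D(\phi(u'))\bigr)$ to ``decay polynomially in $n$.'' It cannot: when $np_n=\Theta(\log n)$, the number of degree profiles that a typical vertex can have is $\exp\bigl(O(\sqrt{np_n\log\log n}\,\log(np_n))\bigr)=n^{o(1)}$ (this is exactly Lemma~\ref{lemma:number_degree_profiles}, the engine behind Theorem~\ref{thm:2nbrhood_impossibility}), so two independent profiles collide with probability at least $n^{-o(1)}$. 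What is both true and sufficient is much weaker: a per-pair bound that is only \emph{polylogarithmically} small, say $(np_n)^{-K}$ for a large constant $K$ (the truth is roughly $\exp(-c\sqrt{np_n}\log(np_n))$), since raising it to the $\approx np_n$ matched pairs gives $\exp\bigl(-cK\,np_n\log(np_n)\bigr)$, which absorbs both the $(np_n)!\approx\exp(np_n\log(np_n))$ bijections and the $n^2$ pairs. But proving such anti-concentration for the exact multiset of $\approx np_n$ (nearly) binomial degrees is the entire difficulty of the theorem, and your proposal leaves it as a hoped-for ``local CLT for sorted Binomial samples'' rather than supplying it; this is precisely the obstacle the paper engineers away by replacing the full profile with the $m$-dimensional vector of neighbor counts per degree class mod $m$, whose coordinates become genuinely independent binomials after conditioning (so Lemma~\ref{lemma:binomial-upper-bound} and Corollary~\ref{corollary:binomial-equality} give $(np_n/m)^{-m/2}$ per pair with no further work), together with the $H=G\{S^c\}$ shift/colliding-vertex bookkeeping of Lemma~\ref{lemma:shifted-colors} to justify the independence you only gesture at.

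Two smaller but real issues: first, the dependence among the profiles $D(w)$ for different $w\in N(u)$ (shared distance-$2$ vertices, edges inside $N_2(u)\cup N_2(v)$, and the contribution of $u$ itself to every $D(w)$) must be quantified, not just waved at via ``approximately independent''; the paper spends Lemmas~\ref{lemma:common-neighbors}, \ref{lemma:H-connectivity}, and \ref{lemma:shifted-colors} on exactly this, showing only $O(1)$ matched pairs are contaminated. Second, your far/close case split relies on $(np_n)^6=n^{o(1)}$, i.e.\ on $np_n=n^{o(1)}$, while the theorem allows $p_n$ up to $1/2$; you would need to either handle the dense regime separately or, as the paper does, invoke the known depth-$2$ results of Czajka--Pandurangan and Mossel--Ross for $np_n=\omega(\log^2 n)$ and prove your statement only for $p_n=o(n^{-5/6})$ (or similar). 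As it stands, the proposal is a plausible program whose central anti-concentration lemma is unproved and whose stated target for that lemma is impossible, so it does not yet constitute a proof.
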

Observe that if $np_n \leq (1-\delta) \log (n)$ for some $\delta > 0$, then the $3$-neighborhoods are not unique due to the existence of isolated vertices (though this does not rule out the possibility of a depth-$3$ canonical labeling). This observation along with Theorem \ref{thm:3nbrhood_algorithm} establishes $p_n = \frac{\log n}{n}$ as the threshold function for uniqueness of $3$-neighborhoods. Specifically, let $G_n \sim G(n,p_n)$ where $p_n = \frac{a \log n}{n}$. Then
\begin{align*}
\lim_{n \to \infty} \mathbb{P}(G_n\text{ has unique } 3\text{-neighborhoods}) &=
\begin{cases}
1 & \text{if } a > 1\\
0 & \text{if } a < 1.
\end{cases}
\end{align*}
Combining 
Theorems~\ref{thm:2nbrhood_impossibility} and~\ref{thm:3nbrhood_algorithm} 
shows that a depth of $3$ is necessary and sufficient in order to ensure unique local neighborhoods, when $(1+\delta) \log(n) \leq np_n = o ( \log^2 (n) / (\log \log n)^{3})$.

Our canonical labeling algorithm is closely related to the Color Refinement (CR) algorithm\footnote{The Color Refinement Algorithm is also referred to as the $1$-dimensional Weisfeiler--Leman Algorithm~\cite{Leman1968}. In turn, the $k$-dimensional Weisfeiler--Leman Algorithm is a generalization of the CR algorithm, which colors $k$-tuples of vertices.}, which was proposed in numerous works and attributed to Morgan~\cite{Morgan1965}. 

The CR method is an iterative labeling scheme: first, every vertex is labeled according to its degree. Next, every vertex is labeled according to the multiset of its neighbors' degrees. The process continues until every vertex has a unique label. Babai and Ku\v{c}era \cite{babai1979canonical} proposed a lossy version of the CR method, in which the initial labels are the degrees and subsequent labels are computed modulo $4$, showing that three steps of this refinement procedure succeed with high probability on $G \sim G(n,1/2)$.
Our canonical labeling algorithm can also be seen as implementing three levels of the CR scheme. However, instead of labeling vertices according to their degrees in the first step, we label vertices according to their degrees, modulo a sufficiently large constant $m$. The initial labels of the vertices thus behave as though they were produced from a uniformly random coloring with $m$ labels, an insight that will be crucial to the analysis. By addressing the case of sparse random graphs, this paper expands the repertoire of CR-style methods. 

Color Refinement can be implemented in $O((n+|E|) \log n)$ time, where $|E|$ is the number of edges in the graph~\cite{Grohe2020,Cardon1982}. Our algorithm, which additionally computes degrees modulo a constant $m$, also has this runtime. In the regime under consideration, the runtime is $O(n^2p_n \log n)$ with high probability. 
When $np_n = \Theta( \log n)$, this is significantly more efficient than the \emph{non-local} canonical labeling algorithm of Bollob\'{a}s \cite{Bollobas1982}. Indeed, in this regime the algorithm of \cite{Bollobas1982} requires determining the neighbors of a given vertex up to distance $r_n = \Theta ( \sqrt{ \log(n) / \log (np_n) } ) = \omega(1)$, leading to a time complexity of $O( n \Delta^{r_n})$.\footnote{A repeated application of the single-source shortest path algorithm of Thorup \cite{Thorup1999} yields a runtime of $O(n|E|)$, a looser bound than $O(n \Delta^{r_n})$.} 

\subsection{Smoothed analysis for canonical labeling}\label{sec:smoothed_results}
The techniques that we develop for Theorem~\ref{thm:3nbrhood_algorithm} above allow us to go beyond the average-case setting to the smoothed analysis setting. 
Informally, our second result shows that for any graph $G$ from a large class of graphs, a random perturbation of $G$ admits a depth-$3$ canonical labeling with high probability. To formalize this result, we need several definitions. First, given a graph $G = (V,E)$, a vertex $v \in V$, and $r \geq 1$, let $\mathcal{N}_r(v; G)$ denote the $r$-neighborhood of $v$ in the graph $G$. 
We consider the following large class of graphs, where $0 <\lambda < 1$ is an arbitrary constant:
\begin{equation}
\mathcal{G}_n(\lambda) := \left\{G = ([n], E) : |\mathcal{N}_2(v; G)| \leq n^{\lambda}, \forall v \in[n]\right\}.  \label{eq:deterministic-class}
\end{equation}
Here $|\mathcal{N}_2(v; G)|$ is the number vertices in the $2$-neighborhood of $v \in [n]$. 
In words, $\mathcal{G}_{n}(\lambda)$ contains all graphs where every $2$-neighborhood has at most $n^{\lambda}$ vertices. 
Note that for constant $\delta > 0$ and $n$ large enough, any graph $G$ with degree bounded by $n^{\frac{1}{2} - \delta}$ belongs to $\mathcal{G}_n(1-\delta)$. Also, note that $\mathcal{G}_n(\lambda) \subseteq \mathcal{G}_n(\lambda')$ when $\lambda < \lambda'$.
Our main result is the following:
\begin{theorem}\label{theorem:smoothed-analysis}
Fix $0 < \lambda <1$. Let $G_1 \in \mathcal{G}_n(\lambda)$ be arbitrary, 
and let $G_2 \sim G(n,p_n)$, 
where 
$n p_n = \omega\left(\log^{2} (n) \right)$ 
and 
$n p_n = o\left(n / \log^3(n) \right)$.
\begin{enumerate}
    \item Let $G$ be the union of $G_1$ and $G_2$ (that is, $(u,v)$ is an edge in $G$ if it is an edge in either~$G_1$ or~$G_2$, or both). Then, with high probability, all $3$-neighborhoods of $G$ are nonisomorphic.
    \item Let $G'$ be the XOR of $G_1$ and $G_2$ (that is, $(u,v)$ is an edge in $G'$ if it is an edge in exactly one of $G_1$ and $G_2$). Then, with high probability, all $3$-neighborhoods of $G'$ are nonisomorphic. 
\end{enumerate}
In both models, it follows that a canonical labeling can be found in $O((n+ |E|) \log n)$ time, where $|E|$ is the number of edges in $G$.
\end{theorem}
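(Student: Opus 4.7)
The plan is to analyze the three-level Color Refinement procedure described in the introduction: label each vertex $v$ by $\ell_0(v) = \deg_G(v) \bmod m$ for a large enough constant $m$, then perform two iterations of refinement to produce labels $\ell_1(v)$ and $\ell_2(v)$. Since $\ell_2(v)$ depends only on the $3$-neighborhood of $v$, showing that $\ell_2(u) \neq \ell_2(v)$ for every pair $u \neq v$ with high probability immediately implies that all $3$-neighborhoods are nonisomorphic and that CR itself provides a canonical labeling in the stated $O((n+|E|)\log n)$ time. The overall skeleton follows that of Theorem~\ref{thm:3nbrhood_algorithm}, with the assumption $G_1 \in \mathcal{G}_n(\lambda)$ replacing the typical structural properties that $G(n,p_n)$ satisfies automatically.

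The first key step is to prove that, conditional on the deterministic graph $G_1$, the initial labels $(\ell_0(v))_{v \in [n]}$ are approximately uniform on $\{0, 1, \ldots, m-1\}$ and approximately independent across vertices. For the union model $G = G_1 \cup G_2$ we write $\deg_G(v) = \deg_{G_1}(v) + Z_v$ with $Z_v \sim \mathrm{Bin}(n - 1 - \deg_{G_1}(v), p_n)$, and since $np_n = \omega(\log^2 n)$, a local central limit theorem for the binomial yields $\|\mathcal{L}(Z_v \bmod m) - \mathrm{Unif}(\{0, \ldots, m-1\})\|_{\mathrm{TV}} = o(1)$, so $\ell_0(v)$ is close to uniform. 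An analogous decomposition works for the XOR model $G' = G_1 \triangle G_2$ by splitting the $G_2$-edges at $v$ according to whether they fall on $G_1$-neighbors or not. For approximate joint independence, the labels $\ell_0(u)$ and $\ell_0(v)$ share randomness only through the single pair $\{u,v\}$ in $G_2$, so the joint distribution over any vertex set $S$ is within $O(|S|^2 p_n)$ total variation of the product uniform.

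With these ingredients in hand, for each pair $u \neq v$ I would bound $\mathbb{P}(\ell_2(u) = \ell_2(v)) = o(n^{-2})$ and then union bound. When the isomorphism types of the (unlabeled) $3$-neighborhoods of $u$ and $v$ differ, some deterministic multiset count already forces $\ell_2(u) \neq \ell_2(v)$. When the shapes are isomorphic, any candidate isomorphism forces many matched vertex pairs to have equal initial labels; the $\omega(\log^2 n)$-sized pool of $G_2$-randomness inside $N_2$ then kills the probability of equality by a factor like $m^{-\omega(\log n)}$, easily strong enough for the union bound. The main obstacle, and the place requiring the most care beyond Theorem~\ref{thm:3nbrhood_algorithm}, will be handling pairs $(u, v)$ for which the adversarial graph $G_1$ induces a structural near-symmetry that shrinks the pool of ``free'' vertices contributing independent randomness; here the assumption $|\mathcal{N}_2(v; G_1)| \leq n^{\lambda}$ combined with $np_n = \omega(\log^2 n)$ (and $np_n = o(n/\log^3 n)$, which keeps $2$-neighborhoods genuinely local) is used to ensure that the random $G_2$-contribution always supplies enough independent, near-uniform labels to destroy any candidate isomorphism.
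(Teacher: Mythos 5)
Your high-level framing (degrees mod $m$ as approximately uniform colors, followed by refinement, plus a per-pair bound of $o(n^{-2})$ and a union bound) matches the paper's, but the proposal is missing the two ideas that actually make the smoothed setting work, and the step you sketch in their place would fail. First, you fix $m$ to be a \emph{constant}, and you propose to handle a pair $u \neq v$ with isomorphic $3$-neighborhood shapes by arguing that ``any candidate isomorphism forces many matched vertex pairs to have equal initial labels'' and then beating the number of candidates with independence. This is exactly the move that breaks down here: unlike in Theorem~\ref{thm:3nbrhood_algorithm}, the degrees of $u$ and $v$ can be as large as $n^{\lambda}$ because of $G_1$, so the number of candidate matchings/isomorphisms can be of order $(n^{\lambda})!$, and you give no mechanism for bounding it or for extracting enough \emph{independent} label equalities when $G_1$ is adversarial (an adversary can make most of the matched neighbors have $G_1$-dominated, nearly deterministic color lists). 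The paper's proof avoids enumerating matchings altogether: it fixes a single witness $w \in \mathcal{N}(u)\setminus(\mathcal{N}(v)\cup\{v\})$, reveals the entire graph except the edges from $w$ to a large ``free'' set $R$ (of size $n - n^{\lambda+o(1)}$), and observes that equality of signatures forces $\ell_G(w)$ to coincide (up to a unit transposition) with $\ell_G(x)$ for \emph{some} $x \in \mathcal{N}(v)$; the union bound is then only over the at most $n$ choices of $x$. To make $n \cdot m^2 (e^{8} m/np_n)^{m/2} = o(n^{-2})$ with $np_n$ as small as $\omega(\log^2 n)$, the paper must take $m = \log n$ growing; with your constant $m$, the probability that two conditionally independent color-count vectors agree is only $\mathrm{polylog}(n)^{-O(1)}$, hopelessly far from $o(n^{-2})$ once the only available union bound is over the up-to-$n$ neighbors of $v$.

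Second, your approximate-independence step is quantitatively too weak. You propose to replace the joint law of the initial labels on a set $S$ by a product of uniforms at total-variation cost $O(|S|^2 p_n)$. Any event bound proved after such a replacement is only valid up to this additive TV error, which is at least of order $\log^2(n)/n \gg n^{-2}$; so it can never certify failure probabilities of $o(n^{-2})$, which is what the union bound over pairs requires. The paper instead never approximates in TV: it conditions on revealed information so that the unrevealed edges $(w,w')$, $w'\in R$, are \emph{exactly} i.i.d.\ $\mathrm{Bern}(p_n)$, it proves concentration of color-class sizes via a Lipschitz (bounded-differences) inequality with failure probability $2m\exp(-\epsilon^2/(5m^2p_n)) = o(n^{-2})$ (this is also where the hypothesis $np_n = o(n/\log^3 n)$, i.e.\ $p_n = o(1/\log^3 n)$, is used -- not for ``keeping neighborhoods local'' as you suggest), and it applies the binomial point-mass bound $e^4/\sqrt{np}$ coordinatewise. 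So while your opening reduction (unique depth-$3$ labels imply nonisomorphic $3$-neighborhoods and an $O((n+|E|)\log n)$ canonical labeling) is fine, the core probabilistic argument as proposed does not go through without the growing number of colors and the single-witness conditioning scheme.
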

The first statement shows that ``sprinkling'' additional edges on $G_1$ ensures uniqueness of $3$-neighborhoods, while the second statement shows that randomly flipping each edge in $G_1$ ensures the same. Both perturbation models can be viewed as a smoothed model of graphs, allowing for a smoothed analysis of canonical labeling. While these models are qualitatively different from the random graph model considered in Theorem \ref{thm:3nbrhood_algorithm}, the analysis is quite similar, crucially relying on the random coloring perspective that we introduce. In fact, the restriction in \eqref{eq:deterministic-class} is to ensure that the color labels behave randomly. 

Theorem~\ref{theorem:smoothed-analysis} shows the existence of an efficient local canonical labeling algorithm in the smoothed analysis setting, which implies that graph isomorphism has polynomial smoothed complexity under mild conditions. 
At the same time, the result above opens up a host of interesting technical questions towards a better understanding of the smoothed complexity of graph isomorphism. 
Can the random perturbation $G_2$ be made sparser? In particular, is it sufficient for $G_2$ to be above the connectivity threshold (i.e., can the condition $n p_n = \omega\left(\log^{2} (n) \right)$ be relaxed to $n p_n \geq (1+\delta) \log (n)$ for any $\delta > 0$)? 
Can the graph class $\mathcal{G}_{n}(\lambda)$ be further enlarged? 
We conjecture that these questions have affirmative answers. 
A further natural question is whether $r$-neighborhoods for constant $r > 3$ can help for canonical labeling algorithms in certain regimes of the smoothed analysis setting.

\subsection{Implications and connections to related problems}
Our results lead to several implications.
First, Theorem \ref{theorem:smoothed-analysis} implies that any graph in $\mathcal{G}_n(\lambda)$ is close (in terms of edge Hamming distance, denoted $d_H(G,G')$) to some graph which has unique $3$-neighborhoods. 
\begin{corollary}\label{cor:hamming}
Let $0 < \lambda < 1$ and $\delta, \epsilon > 0$. Then for any $G_1 \in \mathcal{G}_n(\lambda)$, 
there exists a graph $G'$ with unique $3$-neighborhoods, such that 
\begin{equation}
(1-\epsilon)\frac{\log^{2+\delta}(n)}{n} \binom{n}{2} \leq d_H(G,G') \leq (1+\epsilon)\frac{\log^{2+\delta}(n)}{n} \binom{n}{2}.   \label{eq:Hamming}
\end{equation}
\end{corollary}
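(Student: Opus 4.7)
The plan is to apply the XOR version of Theorem~\ref{theorem:smoothed-analysis} (part 2) with a carefully chosen perturbation probability, and then identify $d_H(G_1, G')$ with the number of edges of the random perturbation. Concretely, I would set $p_n := \log^{2+\delta}(n)/n$, so that $n p_n = \log^{2+\delta}(n)$ comfortably satisfies both hypotheses of the theorem: $n p_n = \omega(\log^2 n)$ and $n p_n = o(n/\log^3 n)$.

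Given an arbitrary $G_1 \in \mathcal{G}_n(\lambda)$, I would then draw $G_2 \sim G(n, p_n)$ and form $G' := G_1 \oplus G_2$. Part 2 of Theorem~\ref{theorem:smoothed-analysis} immediately guarantees that, with high probability, $G'$ has pairwise nonisomorphic $3$-neighborhoods.

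The key observation for the Hamming bound is that $d_H(G_1, G') = |E(G_2)|$, since the edges on which $G_1$ and $G'$ disagree are exactly the edges of $G_2$. Now $|E(G_2)| \sim \mathrm{Bin}\bigl(\binom{n}{2}, p_n\bigr)$ has mean $\mu := p_n \binom{n}{2} = \Theta\bigl(n \log^{2+\delta}(n)\bigr)$, which grows much faster than $\log n$. A standard Chernoff bound therefore yields $(1-\epsilon)\mu \leq |E(G_2)| \leq (1+\epsilon)\mu$ with high probability, matching the target interval in~\eqref{eq:Hamming}.

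Taking a union bound over the two high-probability events --- uniqueness of the $3$-neighborhoods of $G'$ and concentration of $|E(G_2)|$ around $\mu$ --- shows that for $n$ large enough, there exists a realization of $G_2$ for which both hold simultaneously. The probabilistic method then produces the desired $G'$ for every $G_1 \in \mathcal{G}_n(\lambda)$. There is no substantive obstacle here; the only "work" is the bookkeeping to verify that $p_n = \log^{2+\delta}(n)/n$ lies strictly inside the admissible window of Theorem~\ref{theorem:smoothed-analysis}, which it does with room to spare for any fixed $\delta > 0$.
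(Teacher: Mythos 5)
Your proposal is correct and follows essentially the same route as the paper: XOR with $G_2 \sim G(n, p_n)$ for $p_n = \log^{2+\delta}(n)/n$, invoke Part 2 of Theorem~\ref{theorem:smoothed-analysis} for uniqueness of $3$-neighborhoods, use concentration of $|E(G_2)| = d_H(G_1,G')$, and conclude by the probabilistic method via a union bound. No discrepancies to report.
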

\begin{proof}
Using the Probabilistic Method, we show that such a graph $G'$ exists. Let $G_2 \sim G(n,p_n)$, where $p_n = \frac{\log^{2+\delta} (n)}{n}$, and let $G'$ be the XOR of $G_1$ and $G_2$. By Theorem \ref{theorem:smoothed-analysis} the graph $G'$ has unique $3$-neighborhoods, with high probability. Furthermore,  with high probability, the number of edges in $G_2$ falls into the set $\left[(1-\epsilon)\frac{\log^{2+\delta}n}{n} \binom{n}{2},\dots, (1+\epsilon)\frac{\log^{2+\delta}n}{n} \binom{n}{2} \right]$, so that $G'$ satisfies \eqref{eq:Hamming} with high probability. 
\end{proof}
An interesting question is whether the Hamming radius in \eqref{eq:Hamming} can be reduced; does every deterministic graph lie even closer to some graph with unique $3$-neighborhoods?

Second, Theorem \ref{thm:3nbrhood_algorithm} yields an efficient algorithm for matching two isomorphic random graphs.
\begin{corollary}\label{corollary:isomorphism}
Let $\pi : [n] \to [n]$ be an arbitrary permutation. Let $p_{n}$ satisfy 
$n p_{n} \geq (1+\delta) \log(n)$ 
and $p_{n} \leq 1/2$. Let $G_1 \sim G(n,p_n)$. Let $G_2$ be the graph where $(\pi(i), \pi(j))$ is an edge in $G_2$ if and only if $(i,j)$ is an edge in $G_1$. Let $\Delta := \max_{i \in [n]} \deg_{G_1}(i)$.
Then there is an algorithm which matches the vertices of $G_1$ and $G_2$ which succeeds with high probability and runs in time $O(n \Delta \log n)$.
\end{corollary}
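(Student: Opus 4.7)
The plan is to reduce matching to canonical labeling and then invoke Theorem~\ref{thm:3nbrhood_algorithm}. Since $G_2$ is obtained from $G_1$ by relabeling vertices via $\pi$, the $3$-neighborhood of any vertex $v$ in $G_1$ is isomorphic (as an unrooted labeled subgraph) to the $3$-neighborhood of $\pi(v)$ in $G_2$. Theorem~\ref{thm:3nbrhood_algorithm} guarantees that with high probability all $3$-neighborhoods in $G_1$, and hence in $G_2$, are pairwise nonisomorphic. On this high-probability event, $\pi$ is the unique bijection $[n] \to [n]$ under which $3$-neighborhoods match up, so recovering it amounts to identifying each such pair.

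To make this algorithmic, I would run the CR-style canonical labeling procedure underlying Theorem~\ref{thm:3nbrhood_algorithm} on $G_1$ and $G_2$ independently, obtaining label maps $f_1, f_2 : [n] \to \mathcal{L}$. Because the CR labels depend only on the isomorphism type of each vertex's $3$-neighborhood, the uniqueness guarantee implies that on the high-probability event both $f_i$ are injective and $f_2 \circ \pi = f_1$. Sorting the vertices of each graph by their labels (or equivalently bucketing via a hash table) and pairing vertices with equal labels then recovers $\pi$ exactly.

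For the runtime, the paper already notes that the CR-style labeling can be implemented in $O((n+|E|)\log n)$ time. Since the maximum degree of $G_1$ (and hence of $G_2$) is $\Delta$, we have $|E| \leq n\Delta/2$, so each of the two canonical labeling computations runs in $O(n\Delta \log n)$; sorting and pairing the $2n$ labels adds only $O(n \log n)$, giving the total $O(n\Delta \log n)$ claimed. The only point that requires any verification beyond bookkeeping is that pairing by canonical label truly recovers $\pi$ on the high-probability event, which is immediate from isomorphism-invariance and injectivity of $f_1, f_2$; I do not anticipate any significant obstacle, as essentially all the content has been absorbed into Theorem~\ref{thm:3nbrhood_algorithm} and its CR-style implementation.
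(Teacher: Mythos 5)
Your overall route is the same as the paper's: compute the local signatures on both graphs, sort them, and pair vertices with equal signatures, with the same $O(n\Delta\log n)$ accounting. However, there is a genuine logical slip in how you justify correctness. You invoke Theorem~\ref{thm:3nbrhood_algorithm} (all $3$-neighborhoods are pairwise nonisomorphic w.h.p.) and then argue that, since the CR-style labels depend only on the isomorphism type of a vertex's $3$-neighborhood, the labels $f_1,f_2$ are injective on this event. That implication is backwards: isomorphism-invariance only tells you that isomorphic neighborhoods receive equal labels (hence $f_2\circ\pi=f_1$, which is fine), but nonisomorphic neighborhoods can still receive equal labels, because the signature $\{\ell_G(u)\}_{u\sim v}$ is not a complete invariant of the $3$-neighborhood (CR-type labels famously fail to distinguish many nonisomorphic graphs). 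So uniqueness of $3$-neighborhoods does not yield injectivity of the computed labels. What the algorithm actually needs, and what the paper cites, is the stronger statement that the \emph{signatures themselves} are distinct w.h.p., i.e.\ Theorem~\ref{theorem:unique-signatures}, not Theorem~\ref{thm:3nbrhood_algorithm}.

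Repairing this also forces you to address the range of $p_n$: Theorem~\ref{theorem:unique-signatures} only covers $\frac{(1+\delta)\log n}{n}\le p_n=o(n^{-5/6})$, whereas the corollary allows $p_n$ up to $1/2$. In the denser regime the paper's algorithm switches to depth-$2$ signatures (degree profiles), whose uniqueness w.h.p.\ follows from the cited results of Czajka--Pandurangan and Mossel--Ross for $np_n=\omega(\log^2 n)$; your proposal omits this case. A minor bookkeeping point: comparing two signatures costs $O(\Delta)$ (each is a list of up to $\Delta$ constant-length color-count vectors), so the sorting/pairing step costs $O(n\Delta\log n)$ rather than $O(n\log n)$; this does not change the final bound.
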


An analogous result holds also in the smoothed analysis setting. 
\begin{corollary}\label{corollary:isomorphism_smoothed}
Let $\pi : [n] \to [n]$ be an arbitrary permutation. 
Fix $0 < \lambda < 1$ 
and let $p_{n}$ satisfy 
$n p_n = \omega\left(\log^{2} (n) \right)$ 
and 
$n p_n = o\left(n / \log^3(n) \right)$. 
Let $G_{1}$ be generated in either one of the two ways that $G$ is generated in the setting of Theorem~\ref{theorem:smoothed-analysis}. 
Let $G_2$ be the graph where $(\pi(i), \pi(j))$ is an edge in $G_2$ if and only if $(i,j)$ is an edge in $G_1$. Let $\Delta := \max_{i \in [n]} \deg_{G_1}(i)$.
Then there is an algorithm which matches the vertices of $G_1$ and $G_2$ which succeeds with high probability and runs in time $O(n \Delta \log n)$.
\end{corollary}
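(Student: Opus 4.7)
The plan is to mimic the reduction used in Corollary~\ref{corollary:isomorphism}, replacing the invocation of Theorem~\ref{thm:3nbrhood_algorithm} with Theorem~\ref{theorem:smoothed-analysis}. Concretely, I would first observe that $G_2$ is isomorphic to $G_1$, since it is the image of $G_1$ under the bijection $\pi$. Consequently, Theorem~\ref{theorem:smoothed-analysis} applied to $G_1$ immediately yields that with high probability all $3$-neighborhoods of $G_1$, and hence all $3$-neighborhoods of $G_2$, are pairwise nonisomorphic.

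Next, I would run the depth-$3$ canonical labeling algorithm from Section~1 (three rounds of color refinement initialized with vertex degrees modulo a sufficiently large constant $m$) on both $G_1$ and $G_2$. This assigns each vertex a label that depends only on the isomorphism class of its $3$-neighborhood, so when these classes are all distinct the labels within each graph are distinct; by canonicity, the label produced for vertex $i$ in $G_1$ equals the label produced for vertex $\pi(i)$ in $G_2$. I would then recover $\pi$ by sorting (or hashing) the two lists of labels and pairing up equal ones.

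For the runtime, each invocation of color refinement takes $O((n + |E(G_j)|) \log n)$ time, and since every vertex of $G_1$ (hence of $G_2$) has degree at most $\Delta$, we have $|E(G_j)| \leq n \Delta / 2$. The final matching step costs $O(n \log n)$. Altogether this yields the claimed $O(n \Delta \log n)$ bound.

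There is no substantial obstacle beyond bookkeeping: the only point that deserves care is verifying that the algorithm certifying Theorem~\ref{theorem:smoothed-analysis} really is the same depth-$3$ color refinement scheme (so that uniqueness of $3$-neighborhoods translates to distinct algorithmic labels in \emph{both} graphs, rather than requiring a separate argument for the perturbed graph $G_2$), and that expressing its runtime in terms of $|E|$ rather than $n^2 p_n$ gives the stated $O(n \Delta \log n)$ complexity; both points are already noted in the paper's discussion of the algorithm.
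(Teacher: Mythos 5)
Your overall reduction (compute canonical labels, sort, match; runtime $O((n+|E|)\log n)$ for labeling plus $O(n\Delta\log n)$ for sorting) is the same as the paper's implicit argument for this corollary, but there is a genuine logical gap in how you certify that the algorithm's labels are distinct. You argue: the label is a function of the isomorphism class of the $3$-neighborhood, all $3$-neighborhoods are pairwise nonisomorphic by Theorem~\ref{theorem:smoothed-analysis}, hence all labels are distinct. That implication goes the wrong way: the signature is a \emph{lossy} invariant of the $3$-neighborhood, so two nonisomorphic $3$-neighborhoods can perfectly well receive identical signatures. The paper's logic is the reverse of yours — it proves directly that the \emph{signatures} are pairwise distinct with high probability (Theorem~\ref{theorem:unique-signatures} in the average case, and the bound $\mathbb{P}(\mathrm{sig}(u)=\mathrm{sig}(v)) = o(n^{-2})$ established inside the proof of Theorem~\ref{theorem:smoothed-analysis} in the smoothed case), and deduces nonisomorphism of $3$-neighborhoods as a consequence. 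So the correct invocation here is signature uniqueness from the \emph{proof} of Theorem~\ref{theorem:smoothed-analysis}, not $3$-neighborhood uniqueness from its statement.

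A related, compounding issue: you propose initializing with degrees modulo ``a sufficiently large constant $m$.'' That is the average-case scheme; in the smoothed regime, where $np_n$ may be only slightly above $\log^2 n$, the analysis requires $m=\log n$ colors — with constant $m$ the key union bound $n m^2 \left(e^4\big/\sqrt{np_n/m}\right)^m$ is nowhere near $o(n^{-2})$, and the paper does not establish uniqueness of constant-$m$ signatures for such graphs. Once you run the $m=\log n$ signature scheme and invoke its whp uniqueness on $G_1$ (uniqueness on $G_2$ then follows because signatures are isomorphism-invariant and $G_2=\pi(G_1)$, so $\mathrm{sig}_{G_2}(\pi(i))=\mathrm{sig}_{G_1}(i)$), the remainder of your argument — pairing equal labels across the two graphs and the $O(n\Delta\log n)$ runtime accounting — goes through essentially as in the paper's proof of Corollary~\ref{corollary:isomorphism}.
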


Theorem \ref{thm:3nbrhood_algorithm} also implies a result on \emph{graph shotgun assembly}. The graph shotgun assembly problem, introduced by Mossel and Ross \cite{Mossel2019} and subsequently studied by several authors \cite{gaudio2022shotgun, yartseva2016assembling, Huang2021, adhikari2022shotgun, ding2022shotgun, mossel2015shotgun}, asks whether a graph can be reconstructed from the collection of local neighborhoods. Specifically, for each vertex $v \in [n]$, we are given the subgraph of $G$ induced by the vertices which are at distance at most $r$ from $v$. The subgraph is unlabeled apart from identifying the central vertex $v$ (and typically we can find the center even if it is not given). A graph $G$ is \emph{reconstructable} if the only graphs $H$ which have the same collection of unlabeled local neighborhoods  are those isomorphic to $G$. Mossel and Ross \cite{Mossel2019} raised the question of what neighborhood radius is sufficient for shotgun assembly, when $n p_n = \omega(1)$ and $n p_n = O(\log^{2}(n))$. The following result gives a partial answer.
\begin{corollary}\label{corollary:shotgun}
Let $G \sim G(n,p_n)$, where $n p_{n} \geq (1+\delta) \log(n)$. Then, with high probability, $G$ is reconstructable at radius $r = 4$.
\end{corollary}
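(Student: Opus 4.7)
The plan is to use $3$-neighborhood isomorphism classes as canonical vertex labels and show that $4$-neighborhoods carry just enough information to read off each vertex's label together with the labels of all its graph-neighbors; this data then determines the graph up to isomorphism. By Theorem~\ref{thm:3nbrhood_algorithm}, with high probability all $3$-neighborhoods of $G$ are pairwise nonisomorphic, and I would condition on this event throughout.

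The first step is to verify two locality facts. (i) The $3$-neighborhood $\mathcal{N}_3(v; G)$ is recoverable from $\mathcal{N}_4(v; G)$ as the subgraph induced by vertices at distance at most $3$ from the marked center; this is because every shortest path in $G$ from $v$ of length at most $3$ stays inside $\mathcal{N}_4(v; G)$, so distances up to $3$ in $G$ coincide with distances inside $\mathcal{N}_4(v; G)$. (ii) For any $u \sim v$, the $3$-neighborhood $\mathcal{N}_3(u; G)$ likewise sits inside $\mathcal{N}_4(v; G)$, since $d_G(u, w) \leq 3$ forces $d_G(v, w) \leq 4$ by the triangle inequality. Hence from $\mathcal{N}_4(v; G)$ alone I can extract the isomorphism class $L_v$ of $\mathcal{N}_3(v; G)$ and the multiset $\bigl\{L_u : u \sim_G v\bigr\}$.

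Next, let $H$ be any graph on $[n]$ with the same multiset of $4$-neighborhoods as $G$. By extraction~(i), $H$ has the same multiset of $3$-neighborhoods as $G$, so its $3$-neighborhoods are also pairwise distinct. Applying the two extractions to $H$'s $4$-balls then produces the same multiset of pairs $\bigl(L_v, \{L_u : u \sim v\}\bigr)$ as for $G$. Since the labels $\{L_v\}_{v \in [n]}$ are distinct, this pair-multiset uniquely determines a graph on the label set by declaring $L_u L_v$ to be an edge iff $L_u$ appears in the neighbor-multiset associated with $L_v$; this graph is isomorphic to both $G$ and $H$, so $H \cong G$.

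The whole argument is essentially a deterministic consequence of Theorem~\ref{thm:3nbrhood_algorithm} together with the inclusion of adjacent $3$-balls inside $4$-balls, so no genuine obstacle arises; the only point requiring care is the distance-preservation check underlying (i) and (ii), which is immediate from the triangle inequality.
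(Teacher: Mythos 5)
Your main reduction is correct: extracting $\mathcal{N}_3(v;G)$, and $\mathcal{N}_3(u;G)$ for every $u \sim v$, from the rooted $4$-ball of $v$ is valid (geodesics of length at most $3$ from $v$, respectively from a neighbor $u$ of $v$, stay inside $\mathcal{N}_4(v;G)$, so distances up to $3$ in the $4$-ball agree with those in $G$), and once all rooted $3$-ball isomorphism classes are distinct, the multiset of pairs $\left(L_v, \{L_u : u \sim v\}\right)$ determines the graph up to isomorphism. In effect you re-prove, for $r=4$, the statement that pairwise nonisomorphic $(r-1)$-neighborhoods imply reconstructability at radius $r$; the paper simply cites \cite[Lemma 2.4]{Mossel2019} for this step, so your argument is more self-contained but substantively the same reduction.

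There is, however, a genuine gap in coverage: Corollary~\ref{corollary:shotgun} imposes no upper bound on $p_n$, whereas Theorem~\ref{thm:3nbrhood_algorithm} is stated only for $p_n \le 1/2$, and you invoke it (and condition on the event that all $3$-neighborhoods are nonisomorphic) without that restriction. This is not a mere citation technicality: for $p_n$ close to or equal to $1$ the event you condition on can actually fail (for $p_n = 1$ the graph is complete and all rooted $3$-balls are isomorphic), so your argument breaks in that regime even though the corollary must still hold there. The paper handles this with a case split: when $p_n \ge 1/2$ the graph has diameter $2$ with high probability, so every $4$-neighborhood is the entire graph and reconstructability is immediate; when $p_n \le 1/2$ it applies Theorem~\ref{thm:3nbrhood_algorithm} together with the Mossel--Ross lemma. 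Adding such a case split (or some other argument for $p_n > 1/2$) is needed to close your proof.
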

\begin{proof}
When $p_n \geq 1/2$, the graph $G$ has diameter $2$ with high probability, so the conclusion follows. 
Now assume that $n p_{n} \geq (1+\delta) \log(n)$ and $p_{n} \leq 1/2$. 
By \cite[Lemma 2.4]{Mossel2019}, a graph with non-isomorphic $(r-1)$-neighborhoods is reconstructable at radius~$r$. The claim follows from the uniqueness of $3$-neighborhoods, guaranteed by Theorem~\ref{thm:3nbrhood_algorithm}.
\end{proof}
In independent and concurrent work,  Johnston~et~al.~\cite[Theorem 1.7 (iii)]{johnston2022shotgun} 
show a similar result, which requires the slightly stronger condition of $p_n \geq \nicefrac{\beta \log (n)}{n}$ (where $\beta > 30$ suffices).

Our work on canonical labeling is also intimately connected to the \emph{graph matching} problem \cite{Pedarsani2011}, which can be viewed as a noisy version of the graph isomorphism problem. Specifically, two \emph{correlated} graphs $(G_1, G_2)$ are generated as follows: first, a ``parent'' Erd\H{o}s-R\'{e}nyi graph $G_0 \sim G(n, p_n)$ is constructed, and two graphs $(G_1, G_2')$ are generated by independently subsampling edges in $G_0$ with some probability $s \in [0,1]$. Finally, $G_2$ is generated by permuting the vertex labels of $G_2'$ according to a uniform random permutation $\pi:[n] \to [n]$. The objective of graph matching is to recover the latent permutation $\pi$. Notice that when $s = 1$, graph matching is equivalent to matching two isomorphic graphs, and Corollary~\ref{corollary:isomorphism} applies.

In recent years, local canonical labeling algorithms for graph matching have led to \emph{efficient algorithms} for this task \cite{dai2019analysis,Ding2021, mao2021random, mao2022exact, mao2022otter, ganassali2020tree, gml2022correlation, piccioli2022aligning}. At a high level, such algorithms have two main steps: (1) a (local) labeling is constructed and (2) vertices whose labels are ``close enough'' are matched. Clearly, the success of these algorithms depends crucially on how the labels are constructed. The closest work in this line to ours is that of Ding, Ma, Wu, and Xu~\cite{Ding2021}. They study the effectiveness of 2-neighborhood and 3-neighborhood canonical labels for graph matching, showing that when $1 - s = O ( 1/ \poly \log (n) )$, exact recovery of the latent matching $\pi$ is possible when $np_n = \omega ( \log^2 n)$ with 2-neighborhood canonical labels and possible when $np_n \geq C  \log n$ with 3-neighborhood canonical labels, for $C$ sufficiently large. (See also~\cite{mao2021random} for an analysis of 3-neighborhood canonical labels for smaller $s$ but larger $p_n$.) Significantly, Theorem~\ref{thm:3nbrhood_algorithm} improves upon~\cite{Ding2021, mao2021random} when $s = 1$; an interesting future line of work is to extend our techniques to the graph matching problem to understand potential further improvements when $s < 1$.

We remark that our work has connections to the broader theme of local algorithms on graphs (i.e., those that leverage only local neighborhoods for algorithmic tasks); see \cite{Suomela2013} for a survey. The setting of this body of work is somewhat different from ours, however: the underlying graph is assumed to have a \emph{generic, deterministic} structure, and local algorithms are assumed to have access to fully or partially known vertex labels. In particular, the availability of vertex labels is crucial and often necessary for the design of local algorithms; see, for instance,~\cite{Angluin1980local, Hasemann2016local, Goos2013local, Fraigniaud2018local, Suomela2013}. Fortunately, as shown in Theorem~\ref{thm:3nbrhood_algorithm}, unique vertex labels can be efficiently computed in a local manner in Erd\H{o}s-R\'{e}nyi graphs. A natural follow-up question is whether the assumptions on vertex identifiability can be relaxed -- or even removed -- for local algorithms on random graphs. While we do not pursue this question in the present work, we expect it to be a fascinating direction of future study.

Our results described in Section~\ref{sec:smoothed_results} fall into the general framework of \emph{smoothed analysis}. This framework, pioneered by Spielman and Teng~\cite{spielman2001smoothed,spielman2004smoothed} for the simplex algorithm, continuously interpolates between worst-case and average-case instances, and the resulting \emph{smoothed complexity} of an algorithm is often a better indicator of its performance in practice than its worst-case (or average-case) complexity.  
In smoothed analysis, an arbitrary deterministic input is randomly perturbed via a small random perturbation. 
Such smoothed models arise naturally in many discrete settings, 
such as CSPs~\cite{feige2007refuting,guruswami2022algorithms} (where the negation patterns of the literals are randomized with some small probability), 
graphs~\cite{bohman2003many,krivelevich2006smoothed,makarychev2014constant,krivelevich2017bounded} (where the edge variables are randomly perturbed), and 
problems involving noisy channels like trace reconstruction~\cite{chen2021polynomial} (where the input bits are independently flipped with some small probability). 
While the graph isomorphism problem has received significant attention in both the worst-case (though its complexity remains unknown) and the average-case (see above for discussion), 
to the best of our knowledge our work is the first to consider graph isomorphism in the smoothed analysis setting. 
Specifically, 
Theorem~\ref{theorem:smoothed-analysis} shows that $3$-neighborhoods give an efficient local canonical labeling algorithm in the smoothed analysis setting, 
implying that graph isomorphism has polynomial smoothed complexity (under mild conditions). 

\subsubsection{Comparison to the work of Johnston, Kronenberg, Roberts, and Scott}

Finally, we compare our work to the independent and concurrent work of Johnston~et~al.~\cite{johnston2022shotgun}. 
While there is overlap in technical details between our work and~\cite{johnston2022shotgun}, the underlying motivation for deriving these technical results is different. Specifically, our motivation comes from the graph isomorphism problem, while~\cite{johnston2022shotgun} is motivated by the shotgun assembly problem for graphs. We view it as a strength of these works that the results apply to several different applications, as explained above. 

At a technical level, our main theorems, Theorems~\ref{thm:2nbrhood_impossibility},~\ref{thm:3nbrhood_algorithm}, and~\ref{theorem:smoothed-analysis}, 
give precise results for $2$-neighborhoods and $3$-neighborhoods being distinct (with high probability) in random graphs and randomly perturbed graphs. Theorem~\ref{thm:2nbrhood_impossibility} was also proven in~\cite{johnston2022shotgun}, where a near-matching lower bound was also proven, showing that the loglog factors are necessary. 
The work~\cite{johnston2022shotgun} proves a weaker variant of Theorem~\ref{thm:3nbrhood_algorithm}, showing that the $3$-neighborhoods of not-too-small-degree vertices are distinct, as opposed to Theorem~\ref{thm:3nbrhood_algorithm}, which shows that \emph{all} $3$-neighborhoods are distinct. We also note that the proofs here differ substantially in their details. 
The authors of~\cite{johnston2022shotgun} proved such a claim en route to proving a result about $r$-reconstructability for~$r \geq 4$. Due to Theorem~\ref{thm:3nbrhood_algorithm}, we immediately have a stronger result for $4$-reconstructability, see Corollary~\ref{corollary:shotgun} and the subsequent comparison. 
We note that~\cite{johnston2022shotgun} also contains results on graph shotgun assembly that fall outside of our results.

The results and techniques that we develop do not only apply to random graphs, they also apply to randomly perturbed deterministic graphs under mild conditions, see Theorem~\ref{theorem:smoothed-analysis} and Corollaries~\ref{cor:hamming},~\ref{corollary:isomorphism}, and~\ref{corollary:isomorphism_smoothed} above. 
The addition of these results is the main difference between the initial version of this paper~\cite{GRS22b} and the current version. 
As such, our work goes beyond the purview of~\cite{johnston2022shotgun}, which focuses on random graphs, and justifies the effectiveness of our methods in more practical settings beyond random graphs. 

\subsection{Notation and outline}
We denote the vertex set of $G(n,p_n)$ by $[n]:=\{1,2,\ldots,n\}$, and we use $p_n$ and $p$ interchangeably. We write $u \sim v$ to indicate that the edge $(u,v)$ is present in $G$. Recall that $\mathcal{N}_r(u; G)$ denotes the $r$-neighborhood of a vertex $u \in [n]$ within a graph $G$. When the underlying graph $G$ is clear from context, we write $\mathcal{N}_r(u)$.  
Also let $\mathcal{N}(u) \equiv \mathcal{N}_1(u)$. 
Though $\mathcal{N}_r(u)$ is a graph, we write $j \in \mathcal{N}_r(u)$ to mean that $j$ is an element of the vertex set of $\mathcal{N}_r(u)$. Since we work with subgraphs of $G$, let $G\{A\}$ denote the induced subgraph of $G$ on the vertices $A \subset [n]$. Let $\deg_H(v)$ be the degree of a vertex $v$ with respect to a given graph $H$; when the graph is clear from context, we write simply $\deg(v)$. We use $\mathbf{1} (\cE)$ for the indicator function of an event~$\cE$. We use Bachmann--Landau notation $o(\cdot), O(\cdot), \omega(\cdot), \Omega(\cdot), \Theta(\cdot)$.

The rest of the paper is structured as follows. We provide proof outlines in Section \ref{sec:outlines}. After some preliminaries in Section~\ref{sec:preliminaries}, 
we prove Theorem~\ref{thm:3nbrhood_algorithm} and Corollary~\ref{corollary:isomorphism} in Section~\ref{sec:3nbrhood}. 
We prove Theorem~\ref{thm:2nbrhood_impossibility} in Section~\ref{sec:2nbrhood}. Finally, we prove Theorem \ref{theorem:smoothed-analysis} in Section \ref{sec:smoothed-analysis}.

\section{Proof ideas}\label{sec:outlines}
\subsection{Canonical labeling from $3$-neighborhoods}
\label{subsec:outline_canonical_labeling}
In order to prove Theorem \ref{thm:3nbrhood_algorithm}, we will define a labeling algorithm and show that no two vertices receive the same label, with high probability. Our strategy is similar in spirit to the argument of Czajka and Pandurangan~\cite{Czajka2008}, who proposed a depth-$2$ labeling algorithm. Their algorithm computes \emph{degree profiles}: for a given vertex, its degree profile is the multiset of its neighbors' degrees. Their argument begins by fixing two vertices $\{u,v\}$ and sorting the remaining vertices into ``bins'' according to their degrees. Each bin contains vertices with similar degrees, all falling into a prescribed interval. Observe that if $u$ and $v$ have the same degree profile, then for any bin $B$, $u$ and $v$ must have the same number of neighbors in~$B$. But the number of neighbors that $u$ has to a bin $B$ is binomial, and independent of the number of neighbors that $v$ has to~$B$. Furthermore, the number of neighbors to the other bins are also independent binomials. Using standard results on binomial random variables, it is straightforward to argue that $u$ and $v$ are unlikely to have the same degree profile. Taking a union bound over pairs of distinct vertices concludes the proof.

Unfortunately, in the regime of $p_n$ that we consider, the argument of Czajka and Pandurangan \cite{Czajka2008} breaks down. Their proof crucially relies on concentration of the bin sizes, with confidence $1-o(n^{-2})$ to accommodate a union bound over pairs of distinct vertices $(u,v)$. In our regime, such concentration does not appear to hold. To overcome this difficulty, we create bins of a different sort. Fixing a positive integer $m$, we sort the vertices in $G$ according to their degrees, $\mod m$. Formally, let the $k$th bin be defined as
\[\mathcal{C}_G(k) := \{v \in [n] : \deg(v) = k \mod m\}.\]
We show that the number of vertices in each bin is close to $\frac{n}{m}$; in fact, the bins can be thought of as imitating a uniformly random coloring of the vertices of $G$. For this reason, we say that a vertex $v \in \mathcal{C}_G(k)$ has \emph{color} $k$ relative to $G$. Define $\ell_G(v, A)$ to be a color count list, which records the number of neighbors of $v$ among $A$ in each color: set  $\ell_G(v, A)_k := |\cN(v) \cap \mathcal{C}_G(k) \cap A|$. When $A = [n]$, simply write $\ell_G(v)$.

The signatures of the vertices of $G$ are determined as follows.
\begin{enumerate}
    \item Identify the color classes $\{\mathcal{C}_G(k)\}_{k=0}^{m-1}$.
    \item For each vertex $v$, compute $\ell_G(v)$.
    \item The signature of a vertex $v$ is given by $\left\{\ell_G(u) \right\}_{u \sim v}$.
\end{enumerate}
It is easy to see that the signature of a vertex $v$ is determined only by its depth-$3$ neighborhood (see Figure~\ref{fig:sig_example}). We set $m := \left \lceil \frac{3e}{\epsilon^{\star}(1+\delta)} \right \rceil$, where $\epsilon^{\star}(\cdot)$ is the value from Lemma~\ref{lemma:extreme-degrees}. The following result directly implies Theorem~\ref{thm:3nbrhood_algorithm}. 
\begin{theorem}\label{theorem:unique-signatures}
Let $\frac{(1+\delta) \log n}{n} \leq p_n = o\left(n^{-5/6}\right)$, where $\delta > 0$. Let $G \sim G(n,p_n)$. Then, with high probability, the signatures of all $v \in [n]$ are unique.
\end{theorem}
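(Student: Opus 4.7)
The plan is to prove that for a fixed pair of distinct vertices $u, v \in [n]$, the probability of a signature collision is $o(n^{-2})$, and then conclude by a union bound over the $\binom{n}{2}$ pairs. A necessary condition for a collision is $\deg_G(u) = \deg_G(v)$, since the signature of $w$ is a multiset of size $\deg_G(w)$; beyond this, a collision requires that the multisets $\{\ell_G(w) : w \sim u\}$ and $\{\ell_G(w) : w \sim v\}$ be equal. Because $np_n \geq (1+\delta)\log n$, typical degrees concentrate around $np_n$, and I would handle atypical (very high- or very low-degree) vertices separately using the extreme-degree control in Lemma~\ref{lemma:extreme-degrees}, whose $\epsilon^{\star}(\cdot)$ feeds directly into the definition of~$m$.

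The first structural ingredient is to show that the random coloring $w \mapsto c_G(w) := \deg_G(w) \bmod m$ is well-approximated by a uniform random coloring with $m$ colors: in particular, each color class $\mathcal{C}_G(k)$ has size $(1 + o(1)) n/m$ with high probability. This follows because for $np_n \to \infty$ and constant $m$, the distribution of $\mathrm{Bin}(n-1, p_n) \bmod m$ is close to uniform, and different vertices' colors are only mildly dependent. Once the color classes are balanced, the typical count-vector $\ell_G(w)$ has each coordinate concentrated around $\deg_G(w)/m$, with fluctuations of order $\sqrt{np_n / m}$.

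The core argument would proceed via a two-round exposure. First expose the subgraph $G^- := G\{[n]\setminus\{u,v\}\}$, which determines tentative degrees $d^-(z)$ for $z \notin \{u,v\}$ and, up to the two random bits $\mathbf{1}(z \sim u)$ and $\mathbf{1}(z \sim v)$, the colors and $\ell$-vectors of all other vertices. Then expose the remaining independent $\mathrm{Bernoulli}(p_n)$ edges incident to $u$ or $v$. For a signature collision to occur there must be a bijection $\phi : \mathcal{N}(u) \to \mathcal{N}(v)$ matching the $\ell$-vectors, so I would focus on the symmetric difference $\mathcal{N}(u) \triangle \mathcal{N}(v)$, select an ``anchor'' $w_\star \in \mathcal{N}(u) \setminus \mathcal{N}(v)$ with a typical tentative color profile, and bound the probability that some $w' \in \mathcal{N}(v) \setminus \mathcal{N}(u)$ satisfies $\ell_G(w_\star) = \ell_G(w')$. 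Conditional on the tentative structure, each coordinate of $\ell_G(w')$ is a sum of independent Bernoullis and admits an anti-concentration bound of order $1/\sqrt{np_n / m}$; matching a fixed vector in $\mathbb{Z}_{\geq 0}^m$ then costs roughly $(np_n/m)^{-m/2}$ per candidate, and a union bound over the $O(np_n)$ candidates should comfortably beat $n^{-2}$ once $m$ is taken sufficiently large (the hypothesis $p_n = o(n^{-5/6})$ providing the slack that makes this budget work).

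The main obstacle is managing the web of dependencies: the colors of third-party vertices depend on the very edges whose randomness we want to exploit, and the $\ell$-vectors cascade this dependence to depth two. To control this I would condition on a global good event (balanced color classes, degrees in a concentrated band, no excessively large second neighborhoods), and then perform a careful peeling that reveals edges only as needed to evaluate the relevant coordinates of the $\ell$-vectors, preserving enough independence to apply anti-concentration at the end. Vertices in the common intersection $\mathcal{N}(u) \cap \mathcal{N}(v)$, together with high- or low-degree outliers, will be absorbed into a small exceptional set whose contribution is shown to be negligible. Summing the per-pair failure probabilities and union bounding over pairs then delivers the theorem.
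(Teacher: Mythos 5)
There is a genuine gap, and it sits exactly where the theorem is hardest. Your core estimate is: anti-concentration of order $(np_n/m)^{-m/2}$ per candidate, a union bound over the $O(np_n)$ candidates $w'\in\mathcal{N}(v)\setminus\mathcal{N}(u)$, and the claim that this ``comfortably beats $n^{-2}$ once $m$ is taken sufficiently large.'' At the bottom of the claimed range, $np_n=(1+\delta)\log n$, this gives roughly $\log n\cdot(\log n)^{-m/2}$ for constant $m$ --- only polylogarithmically small, nowhere near $o(n^{-2})$, no matter how large the constant $m$ is; and pushing $m$ up to $\Theta(\log n)$ or beyond destroys the $1/\sqrt{np_n/m}$ bound, since that local-CLT estimate needs $np_n/m\to\infty$. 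The hypothesis $p_n=o(n^{-5/6})$ gives no help here: it is an \emph{upper} bound on $p_n$, used in the paper only to control common neighbors and ``colliding'' vertices, not to power the probability estimate. Indeed, the single-anchor strategy you describe is precisely the paper's \emph{smoothed-analysis} argument (Theorem~\ref{theorem:smoothed-analysis}), and there it requires $np_n=\omega(\log^2 n)$ and $m=\log n$ for exactly this reason. To get down to the connectivity threshold, the paper's proof of Theorem~\ref{theorem:unique-signatures} must exploit the \emph{full} matching: a collision forces $\ell_G(u_i)=\ell_G(v_{\pi(i)})$ simultaneously for essentially all $r\approx np_n$ matched pairs, yielding (after conditioning) a product of independent binomial near-equality probabilities of size roughly $\bigl(\mathrm{const}/\sqrt{np_n/m}\bigr)^{(r-O(1))m}$ per matching; this is what absorbs the union bound over all $r!$ matchings, with the constant $m=\lceil 3e/(\epsilon^{\star}(1+\delta))\rceil$ chosen so that the per-pair gain $(np_n)^{-\Theta(m)}$ beats the per-pair cost $np_n$ coming from $r!$, leaving $(np_n)^{-\Theta(np_n)}=o(n^{-2})$ because $np_n\geq(1+\delta)\log n$. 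Discarding all but one matched pair, as you propose, throws away the only source of $o(n^{-2})$ decay in the sparse regime.

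A second, related problem is your exposure order. Revealing $G^{-}=G\{[n]\setminus\{u,v\}\}$ first exposes exactly the edges from $\mathcal{N}(u)\cup\mathcal{N}(v)$ to the rest of the graph --- the randomness your anti-concentration step is supposed to use. Conditioned on $G^{-}$, the vector $\ell_G(w')$ is essentially deterministic: the only remaining randomness is the handful of color shifts caused by edges from $w'$'s (already known) neighbors to $u$ or $v$, whose expected number is $O(np_n^2)=o(1)$, so no coordinate is a ``sum of independent Bernoullis'' with fluctuation $\sqrt{np_n/m}$ at that point. The paper conditions instead on $G\{S\}$ and $G\{S^c\}$ with $S=\{u,v\}\cup\mathcal{N}(u)\cup\mathcal{N}(v)$, leaving the bipartite edges between $\mathcal{N}(u)\cup\mathcal{N}(v)$ and $S^c$ unrevealed; these supply the independent binomial color counts, after a shifting lemma (Lemma~\ref{lemma:shifted-colors}) reconciles colors in $H=G\{S^c\}$ with colors in $G$ and discards an $O(1)$ set of exceptional (colliding or shared-neighbor) pairs. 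Your ``careful peeling'' gestures at this, but as written the first round of your exposure already burns the randomness you need.
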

\noindent 
Note that the existing results for depth-$2$ labeling \cite{Czajka2008,Mossel2019} cover the remaining regime where $p_n = \omega\left( \log^2(n) / n \right)$ and $p_n \leq \frac{1}{2}$.
\begin{figure}[t]
    \centering
    \includegraphics[width=0.7\textwidth]{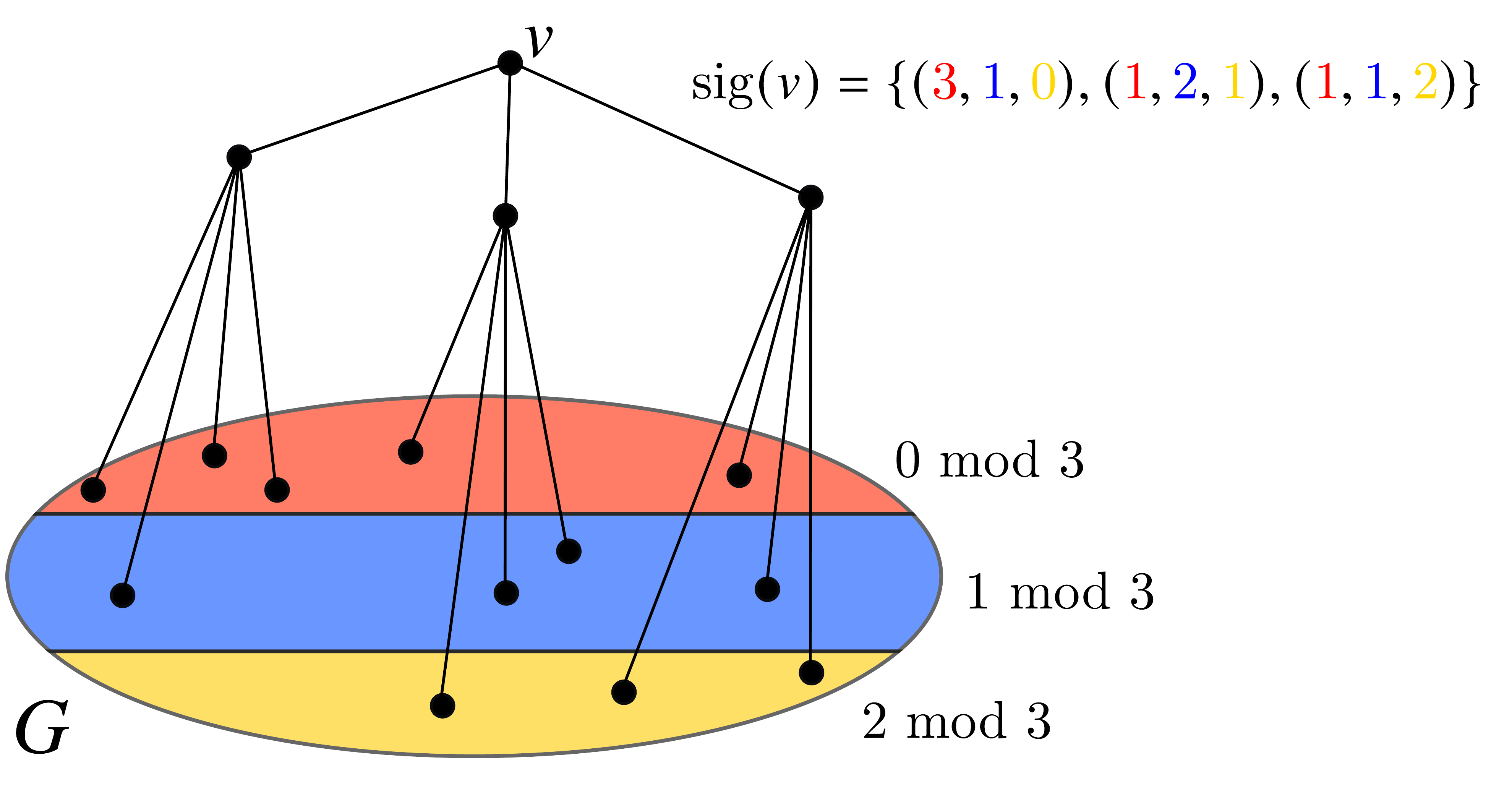}
    \captionsetup{width=.9\linewidth}
    \caption{Example of a vertex signature using 3 colors. In the diagram, the oval region represents the set of vertices in $G$; the red (top) region represents the subset of vertices with degree that is 0 mod 3; the blue (middle) region represents the subset of vertices with degree that is 1 mod 3; the yellow (bottom) region represents the subset of vertices with degree that is 2 mod 3. For each neighbor $u$ of $v$, we construct a 3-element list (the color count list $\ell_G(u)$) which encodes the number of vertices of each color in the neighborhood of $u$, excluding $v$. The signature of $v$ is the collection of tuples corresponding to each of its neighbors.} 
    \label{fig:sig_example}
\end{figure}

In order to give intuition for the proof strategy, suppose that the colors were truly assigned uniformly at random. Fix a pair of vertices $u$ and $v$, we wish to upper-bound the probability that they have the same signatures. Since having the same signature requires having the same degree, let us assume that both $u$ and $v$ have degree $N$. Let $u_1, \dots, u_N$ and $v_1, \dots, v_N$ denote the neighbors of $u$ and $v$, respectively. For simplicity of exposition, we suppose that $u$ and $v$ have no common neighbors, are not themselves connected by an edge, and none of their neighbors are connected. Under these assumptions, if $u$ and $v$ have the same signatures, then there must exist a permutation $\pi: [N] \to [N]$ such that for all $i \in [N]$ and colors $k \in\{0, \dots, m-1\}$, $u_i$ and $v_{\pi(i)}$ have the same number of neighbors of color $k$. If indeed the assignment of colors were independent, then the probability that $u$ and $v$ have the same signature (under $\pi$) is equal to
\begin{align}
\prod_{i=1}^N \prod_{k = 0}^{m-1} \mathbb{P}\left(u_i \text{ and } v_{\pi(i)} \text{ have the same number of neighbors of color } k \right). \label{eq:random-colors}
\end{align}
Let $c_k$ be the number of vertices of color $k$ outside of the neighborhoods of $u$ and $v$, for $k \in \{0, \ldots, m - 1 \}$. Then the number of neighbors of color $k$ is distributed as a binomial random variable with parameters~$(c_k, p_n)$. Due to independence between color counts of neighbors of $u_i$ and $v_{\pi(i)}$, we invoke Corollary~\ref{corollary:binomial-equality} to bound~\eqref{eq:random-colors} from above by 
\begin{align*}
\left(\prod_{k = 0}^{m-1} \frac{e^4}{\sqrt{c_k p_n}}\right)^N.
\end{align*}
Taking a union bound over $N!$ matchings, we can show that $u$ and $v$ have distinct signatures with probability $1 - o(n^{-2})$.

However, since the colors are not uniform, we must adapt the above proof strategy. As before, we begin by revealing the neighborhoods of $u$ and $v$, and assume these vertices have the same number of neighbors. There must be a matching $\pi$ of the neighbors of $u$ to the neighbors of $v$ such that $\ell_G(u_i) = \ell_G(v_{\pi(i)})$ for all $i \in [N]$. Given a matching $\pi$, our goal is then to bound the probability that $\ell_G(u_i) = \ell_G(v_{\pi(i)})$ simultaneously for all pairs of neighbors.
We first show that each vertex $w \in \cN(u) \cup \cN(v)$ has at most a constant number of neighbors within $S= S(u,v) := \{u\} \cup \{v\} \cup \cN(u) \cup \cN(v)$. Therefore, equality of $\ell_G(u_i)$ and $\ell_G(v_{\pi(i)})$ requires near-equality of $\ell_G(u_i, S^c)$ and $\ell_G(v_{\pi(i)}, S^c)$.

But since our bins are defined globally, with respect to the graph $G$, the counts $\{\ell(u_i, S^c)\}_i \cup \{\ell(v_{i}, S^c)\}_i$ are dependent. In order to create independence, we construct ``local'' color classes. Let $H = H(u,v) = G\{S^c\}$ be the graph induced on $S^c$. Note that conditioned on $|S|$, the graph $H$ is an \ER random graph. We can show that with high probability, the local color classes 
\[
\mathcal{C}_{H}(k) := \{w \in S^c : \deg_{H}(w) = k \mod m\} 
\]
are balanced. 

Since the signatures are defined with respect to the global color classes $\{\mathcal{C}_G(k)\}_k$, we need to relate global color count lists to their local counterparts. A crucial observation is the following: if there are exactly $j$ vertices in $\cN(u) \cup \cN(v)$ which are connected to a vertex $z \in \mathcal{C}_H(k)$, then $z$ has color $(k+j) \mod m$ with respect to $G$. In particular, if there is exactly one vertex $w \in \cN(u) \cup \cN(v)$ connected to $z \in \mathcal{C}_H(k)$, then $z$ has color $(k+1) \mod m$ with respect to $G$. We say that a vertex $w \in \cN(u) \cup \cN(v)$ is \emph{colliding} if there is some $w' \in \cN(u) \cup \cN(v)$ where $w' \neq w$ such that $w$ and $w'$ have a common neighbor in $S^c$. We will show that at most a constant number of vertices $w \in \mathcal{N}(u) \cup \cN(v)$ are colliding. If $w$ is a non-colliding vertex, then 
\[\cN(w) \cap C_H(k) = \cN(w) \cap C_G(k+1) \cap S^c,\]
where $k+1$ is taken $\mod m$.
In other words, $\ell_G(w, S^c)$ is a shifted version of $\ell_H(w, S^c)$. See Figure~\ref{fig:sig_shifting} for an illustration of this relation. 

\begin{figure}[t]
    \centering
    \includegraphics[width=0.8\textwidth]{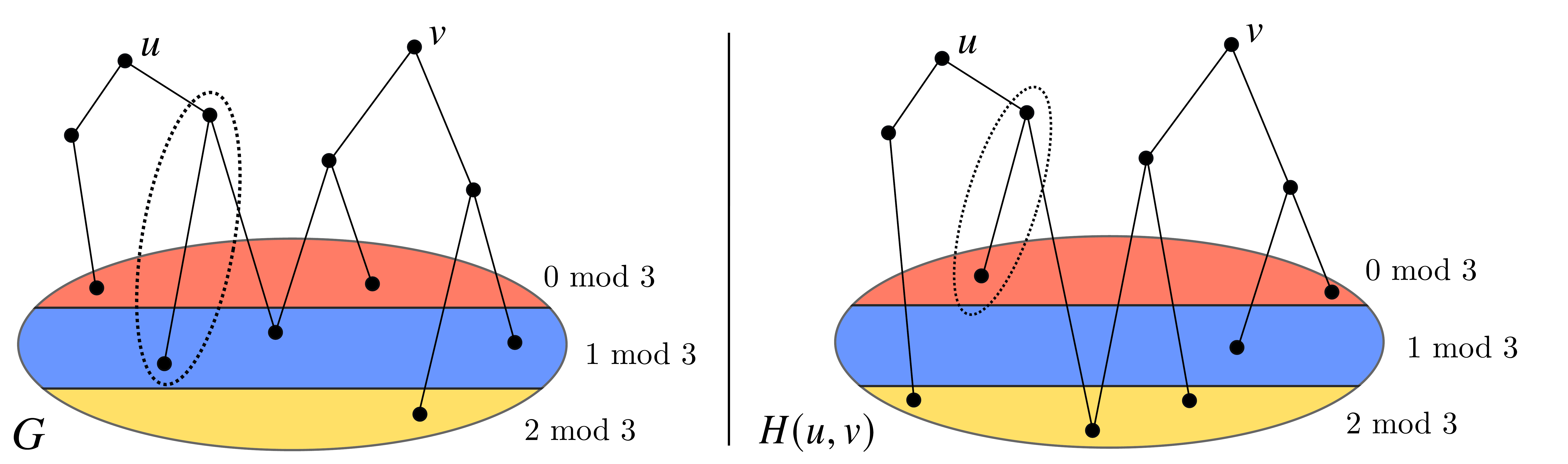}
    \captionsetup{width=.9\linewidth}
    \caption{Illustration of the relation between $\ell_G(w, S^c)$ and $\ell_H(w,S^c)$. For given vertices $u,v$ in $G$, the diagram on the left depicts the color count list for neighbors of $u$ and $v$; the diagram on the right shows the same for connections to the color set with respect to $H(u,v)$ (the version of $G$ with $u, v, \cN(u), \cN(v)$ removed). In particular, the edge circled within the dotted oval in each diagram represents the \emph{same} edge, though the 
    color class of the edge endpoint outside of $u,v, \cN(u)$ and $\cN(v)$ changes between $G$ and $H(u,v)$.} 
    \label{fig:sig_shifting}
\end{figure}

Finally,
the counts $\{\ell_H(w, S^c)\}_{w \in S}$ can be described by independent binomial random variables when we condition on the two graphs induced by $S$ and $S^c$. The value of $m$ is chosen so that the required (near) equalities are sufficiently unlikely. 

\subsection{Isomorphic $2$-neighborhoods}\label{sec:outline-2-neighborhoods}
We next outline the proof of Theorem~\ref{thm:2nbrhood_impossibility}, which states that, when $n p_{n} = o ( \log^{2}(n) / \left( \log \log n \right)^{3} )$, 
there exist distinct yet isomorphic $2$-neighborhoods. 
In fact, the proof will show that the conclusion also holds when $n p_{n} = c \log^{2}(n) / \left( \log \log n \right)^{3}$ 
for constant $c < 1/64$. 
When $n p_{n} \leq \frac{1}{2} \log n$, with high probability there are multiple isolated vertices, so the claim immediately follows; 
consequently we may, 
and thus will, 
assume in the following that 
$n p_{n} \geq \frac{1}{2} \log n$. The proof can be summarized by the following steps.

\begin{enumerate}
    \item 
    \textbf{(Defining ``good'' vertices)} 
    First, 
    define the set of \emph{atypical} vertices, 
    in terms of degrees:
    $$
    A := \{ v : | \deg(v) - (n-1)p| \geq 3\sqrt{np \log \log n}  \}.
    $$
    Now define the set of \emph{``good''} vertices $B$ such that $v \in B$ if and only if $v \notin A$, $v$ has no neighbors in~$A$, and the 2-neighborhood of $v$ is a tree. 
    We will show that $|B| \geq n/2$ with high probability (Lemma~\ref{lemma:B_size} below). 
    Consequently, we may restrict to searching for isomorphic $2$-neighborhoods among good vertices. 
    
    \item 
    \textbf{(Reducing to degree profiles)} 
    For a node $v$ define its \emph{degree profile} to be the list of degrees of the neighbors of $v$, in decreasing order. 
    Since the $2$-neighborhoods of good vertices are trees by definition, 
    it follows that for two nodes in $B$, 
    their $2$-neighborhoods are isomorphic if and only if their degree profiles are identical. 
    Thus it suffices to find distinct good vertices with the same degree profile. 

    \item 
    \textbf{(Bounding the number of degree profiles)} 
    We will show that, 
    when $n p_{n} = \omega ( \log \log n )$, 
    the number of possible degree profiles for a good vertex is at most 
    $\exp ( O ( \sqrt{ n p_{n} \log \log n} \log(n p_{n})  ) )$ 
    (see Lemma~\ref{lemma:number_degree_profiles} below). 
    This is a purely deterministic result that crucially uses the definition of ``goodness''.
    
    \item 
    \textbf{(Pigeonhole principle)} 
    When $n p_{n} = o ( \log^{2}(n) / \left( \log \log n \right)^{3} )$, 
    the previous step implies that the number of possible degree profiles for a good vertex is $o(n)$. Hence by the pigeonhole principle, with high probability there must exist distinct $u,v \in B$ with isomorphic $2$-neighborhoods. (In fact, this gives polynomial lower bounds on the number of vertices with isomorphic $2$-neighborhoods, see Lemma~\ref{lemma:pigeonhole}.)
\end{enumerate}

\subsection{Smoothed analysis of graph isomorphism}\label{sec:smoothed-analysis-outline}
We will focus on the proof of Theorem~\ref{theorem:smoothed-analysis}, Part 1, where $G$ is a union of the deterministic graph $G_1 \in \mathcal{G}_n(\lambda)$ (recall \eqref{eq:deterministic-class}) and the random graph $G_2$. Later, we will show how to modify the argument when $G'$ is the XOR of $G_1$ and $G_2$.

We again show that the signatures of all vertices of $G$ are unique. As before, we show that for a fixed pair $u \neq v$, we have $\mathbb{P}(\text{sig}(u) = \text{sig}(v)) = o(n^{-2})$ and apply the union bound. However, we cannot take a further union bound over matchings of $\mathcal{N}(u)$ to $\mathcal{N}(v)$ as was done in the proof of Theorem \ref{thm:3nbrhood_algorithm}, since now it is possible that $|\mathcal{N}(u)|, |\mathcal{N}(v)| \gg np_n$. Instead, we allow ourselves more randomness by restricting $p_n$ to be $\omega\left(\frac{\log^2 n}{n}\right)$ (and also mildly restrict the upper limit for $p_n$ to bound the random contribution to vertex degrees). The additional randomness allows us to create more color classes, and still ensure that each one is sizeable. Specifically, we show that with $\log n$ color classes, with high probability all of them will be of size $n / \log(n)$ up to first-order terms (see Lemma \ref{lemma:balanced-colors-semirandom}).

Now proving the uniqueness of signatures is simple: we reveal the neighbors of $u$ and $v$, and choose an arbitrary $w \in \mathcal{N}(u) \setminus \mathcal{N}(v)$. We then reveal the rest of the graph, except for some of the random neighbors of $w$. The conditions on $G_1$ and $G_2$ ensure that randomness is ``preserved'' in such a way that equality of signatures implies an unlikely property among the preserved random edges. 
More specifically, fixing vertices $u \neq v$, we will reveal the majority of the edges in the graph $G$ (in a way that depends on $u$ and $v$). First, reveal the graph induced by $u$, $v$, $\mathcal{N}(u)$, and $\mathcal{N}(v)$. Let $w$ be the smallest-index vertex among the set $\mathcal{N}(u) \setminus \left(\mathcal{N}(v) \cup \{v\} \right)$ (we show that there exists such a $w$ in Lemma \ref{lemma:neighborhood_difference}).
Next, reveal the remainder of the graph, but do not reveal edges $(w,w')$ which satisfy all of the following properties:
\begin{enumerate}
    \item \label{item:R_neighborhood}
    $w' \not \in \{u\} \cup \{v\} \cup \mathcal{N}(u) \cup \mathcal{N}(v)$
    \item \label{item:R_2_neighborhood}
    $w'$ is not a neighbor of any $z \in \mathcal{N}(u) \cup \mathcal{N}(v) \setminus \{w\}$
    \item \label{item:R_G1_edges}
    $(w,w')$ is not an edge in $G_1$.
\end{enumerate}
See Figure \ref{fig:random-edges} for an illustration of the revealed information. Let $R = R(u,v)$ denote the set of $w' \in [n]$ such that $(w,w')$ satisfies the above properties. Conditioned on the revealed information, denoted by $\mathcal{I}$, each edge among $\{(w,w') : w' \in R\}$ is present with probability $p_n$, independently.

After revealing most of the graph in the form of $\mathcal{I}$, we can imagine forming the associated signatures of $u$ and $v$. That is, $u$ and $v$ are each associated with a (multi-)set of $m$-length lists, one for each neighbor. How do colors change after revealing the edges $\{(w,w') : w' \in R\}$? Certainly, the color of $w$ may change. Also, the colors of the newly revealed neighbors of $w$ will all be shifted up by $1$. No other colors will change. How do these color changes affect the color count lists of vertices $x \in \mathcal{N}(u) \cup \mathcal{N}(v) \setminus \{w\}$? Any such $x$ that was already known to be connected to $w$ will have its list updated by shifting one unit from one color to another, due to the fact that $w$ may change color. By construction, there will not be any $w' \in R$ such that $w'$ is connected to both $w$ and $x$. Therefore, there will not be any further updates to the lists of $x \in \mathcal{N}(u) \cup \mathcal{N}(v) \setminus \{w\}$; any such $x$ will have its list updated by at most a unit transposition.

Having accounted for the possible changes to color count lists, it follows that if $\{\text{sig}(u)  = \text{sig}(v)\}$, then there must exist $x \sim v$ such that the list associated to $w$ (after revealing everything) must be the same as the list associated to $x$ (after revealing only $\mathcal{I}$), up to a unit transposition.

\begin{figure}[t]
    \centering
    \includegraphics[width=0.8\linewidth]{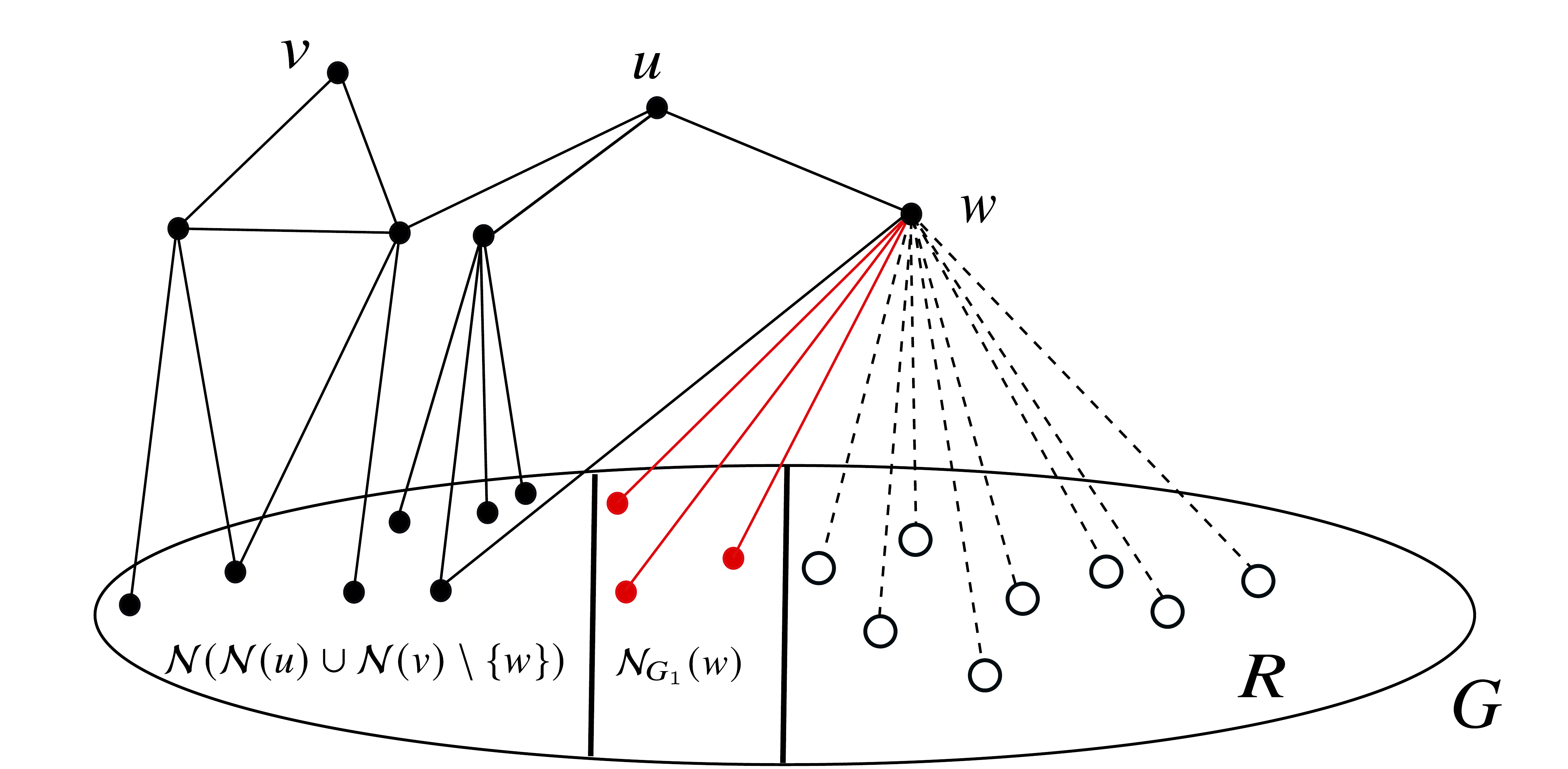}
    \caption{Illustration of the information revealed when proving that $\text{sig}(u) \neq \text{sig}(v)$.
    The oval region represents the set of vertices in $G$ outside of $u$, $v$ and their respective neighborhoods, which encompasses the vertices satisfying Condition~\ref{item:R_neighborhood}.
    The region on the far left is the set of vertices that do not satisfy Condition \ref{item:R_2_neighborhood}. 
    The region in the middle captures neighbors of $w$ in $G_1$ (here, the edges from $G_1$ as well as $w$'s neighborhors in $G_1$ are shown in red), which are the vertices that do not satisfy Condition \ref{item:R_G1_edges}.
    The remaining vertices, which are those that satisfy Conditions \ref{item:R_neighborhood}, \ref{item:R_2_neighborhood} and \ref{item:R_G1_edges}, are shown by the region $R$.
    The formation of potential edges between $w$ and vertices in $R$ (represented by dotted lines) is independent, conditioned on the revealed information.}
    \label{fig:random-edges}
\end{figure}

Let us now sketch the computation of the probability that the color list of $w$ (denoted by $\ell_G(w)$) is the same as the color list of some neighbor of $v$ in $G$ (denoted by $\ell_G(x)$ for $x \sim v$).
Let $H$ be the graph formed from the edges that are revealed by $\mathcal{I}$ (that is, it contains all edges in $G$ except those in the set $\{ (w,w') \in E(G) : w' \in R \}$ ).
Conditioned on $\mathcal{I}$, the exact value of $\ell_G(x)$ is determined by the number of neighbors $w$ has in each color class in $R$ with respect to $H$.
We show that $R = n - o(n)$ (see Lemma \ref{lemma:size-R}) and that the color classes are balanced in $R$; that is, $| \mathcal{C}_H(m,k) \cap R | \sim n / m$ for all $k \in \{0, \ldots, m - 1 \}$.
As a result, the number of neighbors $w$ has of color $k$ in $H$ is (approximately) given by $X_k \sim \mathrm{Bin}(n/m, p_n)$, where the $X_k$'s are independent.
The probability that $\ell_G(w)$ takes on a particular value reduces to the probability that the $X_k$'s take on a particular (deterministic) value. 
By Lemma \ref{lemma:binomial-upper-bound}, this probability is at most 
\[
\left( \frac{e^4}{\sqrt{np_n / m}} \right)^m.
\]
Since $\ell_G(x)$ can be constructed by taking at most a unit transposition of $\ell_H(x)$ (which is measurable with respect to $\mathcal{I}$), there are at most $m^2$ possibilities for the value of $\ell_G(x)$. 
Taking a union bound over these possibilities shows that
\[
\mathbb{P}( \ell_G(w) = \ell_G(x) \vert \mathcal{I} ) \le m^2  \left( \frac{e^4}{\sqrt{np_n / m}} \right)^m.
\]
To relate this probability to the signatures of $u$ and $v$, we bound the probability that $\ell_G(w) = \ell_G(x)$ over \emph{all} neighbors $x$ of $v$.
It follows that
\[
\mathbb{P} ( \text{sig}(u) = \text{sig}(v) \vert \mathcal{I} ) \le \mathbb{P} ( \exists x \in \mathcal{N}(v) : \ell_G(w) = \ell_G(x) \vert \mathcal{I} ) \le \sum_{x \in \mathcal{N}(v) } \mathbb{P}( \ell_G(w) = \ell_G(x) \vert \mathcal{I} ) \le n m^2  \left( \frac{e^4}{\sqrt{np_n / m}} \right)^m.
\]
In the last inequality, we have used that $| \mathcal{N}(v) | \le n$.
Finally, due to the choices $m = \log n$ and $np_n = \omega (\log^2 n)$, the probability that the signatures of $u$ and $v$ are distinct is $1 - o(n^{-2})$.
A union bound over $u,v \in [n]$ proves that all signatures in $G$ are distinct.

\section{Preliminaries}\label{sec:preliminaries}

\subsection{Probability bounds}

We first collect standard Chernoff bounds that we use in our arguments (see, e.g.,~\cite[Theorems~4.4,~4.5]{mitzenmacher2017probability}).

\begin{lemma}[Chernoff bounds]
Let $X_1, \dots, X_n$ be mutually independent Bernoulli random variables, where $X_i \sim \mathrm{Bern}(p_i)$. Let $X := \sum_{i=1}^n X_i$ and let $\mu := \sum_{i=1}^n p_i = \mathbb{E}[X]$. 
\begin{enumerate}
    \item For any $\delta > 0$,
\begin{align}
\mathbb{P}\left(X \geq (1+\delta) \mu \right) &\leq \left(\frac{e^{\delta}}{(1+\delta)^{1+\delta}} \right)^{\mu}. \label{eq:Chernoff-upper}
\end{align}
\item For any $\delta \in (0,1)$,
\begin{align}
\mathbb{P}\left(X \leq (1-\delta) \mu \right) &\leq \left(\frac{e^{-\delta}}{(1-\delta)^{1-\delta}} \right)^{\mu}. \label{eq:Chernoff-lower}
\end{align}
\end{enumerate}
\end{lemma}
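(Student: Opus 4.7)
The plan is to prove both tail bounds via the exponential moment (Chernoff) method, handling the upper and lower tails separately but with parallel arguments. For the upper tail, I would fix a parameter $t > 0$ and apply Markov's inequality to the nonnegative random variable $e^{tX}$, giving $\mathbb{P}(X \geq (1+\delta)\mu) \leq e^{-t(1+\delta)\mu}\,\mathbb{E}[e^{tX}]$. The independence of the $X_i$'s factorizes the moment generating function as $\mathbb{E}[e^{tX}] = \prod_{i=1}^n \mathbb{E}[e^{tX_i}]$, and each Bernoulli factor evaluates to $1 + p_i(e^t - 1)$, which is bounded above by $\exp(p_i(e^t-1))$ via the inequality $1+y \leq e^y$. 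Multiplying across $i$ and recalling $\mu = \sum_i p_i$ yields $\mathbb{E}[e^{tX}] \leq \exp(\mu(e^t - 1))$.

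Substituting this bound back gives $\mathbb{P}(X \geq (1+\delta)\mu) \leq \exp(\mu(e^t - 1) - t(1+\delta)\mu)$, valid for every $t > 0$. I would then optimize the exponent over $t$: differentiating shows the minimum occurs at $t = \ln(1+\delta)$ (which is positive since $\delta > 0$), and plugging this choice in produces exactly $\left(e^{\delta}/(1+\delta)^{1+\delta}\right)^{\mu}$, establishing \eqref{eq:Chernoff-upper}.

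For the lower tail, I would repeat the same recipe with $e^{-tX}$ for $t > 0$, so that $\mathbb{P}(X \leq (1-\delta)\mu) = \mathbb{P}(e^{-tX} \geq e^{-t(1-\delta)\mu}) \leq e^{t(1-\delta)\mu}\,\mathbb{E}[e^{-tX}]$. The same independence-and-$1+y \leq e^y$ computation yields $\mathbb{E}[e^{-tX}] \leq \exp(\mu(e^{-t}-1))$, and the optimal choice is $t = -\ln(1-\delta)$, which is strictly positive because $\delta \in (0,1)$. Substituting produces $\left(e^{-\delta}/(1-\delta)^{1-\delta}\right)^{\mu}$ and proves \eqref{eq:Chernoff-lower}.

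Since this is a classical result, there is no genuine conceptual obstacle; the only task requiring care is the calculus step verifying that the stated choices of $t$ minimize the relevant exponent, but this is a routine single-variable optimization with a unique interior critical point. Alternatively, one could appeal directly to the textbook reference \cite{mitzenmacher2017probability} cited in the excerpt, since the lemma is stated as preliminary material rather than as a novel contribution of the paper.
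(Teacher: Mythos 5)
Your proposal is correct: the exponential-moment (Markov on $e^{tX}$, factorize by independence, bound each factor via $1+y \leq e^y$, optimize at $t=\ln(1+\delta)$ resp.\ $t=-\ln(1-\delta)$) argument establishes both \eqref{eq:Chernoff-upper} and \eqref{eq:Chernoff-lower} exactly as stated. The paper itself gives no proof of this lemma and simply cites Theorems~4.4 and~4.5 of~\cite{mitzenmacher2017probability}, where the standard proof is precisely the one you outline, so your argument matches the intended source.
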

\begin{corollary}\label{corollary:Chernoff}
Let $X_1, \dots, X_n$ be mutually independent Bernoulli random variables, where $X_i \sim \mathrm{Bern}(p_i)$. Let $X := \sum_{i=1}^n X_i$ and let $\mu := \sum_{i=1}^n p_i = \mathbb{E}[X]$. If $0 < \mu < t$, then
\[
\mathbb{P}\left(X \geq t \right) \leq \left(\frac{e \mu}{t}\right)^t.
\]
\end{corollary}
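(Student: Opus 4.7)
The plan is to derive this as a direct consequence of the multiplicative Chernoff bound \eqref{eq:Chernoff-upper}. Since we are given $0 < \mu < t$, I would write $t = (1+\delta)\mu$ with $\delta := t/\mu - 1 > 0$, so that the hypothesis of \eqref{eq:Chernoff-upper} is satisfied and the target tail probability $\mathbb{P}(X \geq t)$ is exactly $\mathbb{P}(X \geq (1+\delta)\mu)$.

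Next, I would substitute $1+\delta = t/\mu$ into the right-hand side of \eqref{eq:Chernoff-upper}. This gives
\[
\mathbb{P}(X \geq t) \leq \left(\frac{e^\delta}{(1+\delta)^{1+\delta}}\right)^{\mu} = \frac{e^{\delta \mu}}{(t/\mu)^{(1+\delta)\mu}} = \frac{e^{t-\mu}}{(t/\mu)^{t}}.
\]
Using $e^{t-\mu} \leq e^{t}$ (which holds because $\mu > 0$), the bound simplifies to $e^{t}/(t/\mu)^{t} = (e\mu/t)^{t}$, which is the claimed inequality. There is no real obstacle here; the entire argument is a one-line algebraic rearrangement of \eqref{eq:Chernoff-upper} together with the trivial bound $e^{-\mu} \leq 1$. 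The only thing to check is that $\delta > 0$ so that \eqref{eq:Chernoff-upper} applies, and this is immediate from the hypothesis $\mu < t$.
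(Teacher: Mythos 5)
Your proposal is correct and matches the paper's proof: the paper likewise obtains the corollary by plugging $\delta = (t-\mu)/\mu$ (identical to your $\delta = t/\mu - 1$) into the Chernoff bound \eqref{eq:Chernoff-upper}, with the same algebraic simplification and the harmless discard of the factor $e^{-\mu} \leq 1$.
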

\begin{proof}
The claim follows from~\eqref{eq:Chernoff-upper} by plugging in $\delta = (t-\mu)/\mu$. 
\end{proof}

The next lemma gives an upper bound on binomial probabilities. 
Let $b(k;n,p)$ denote the probability that $X \sim \text{Bin}(n,p)$ takes value $k \in \{0, 1, \dots, n\}$.
\begin{lemma}\label{lemma:binomial-upper-bound}
If $np \geq 2$ and $p \leq \frac{1}{3}$, then $\max_{k} b(k; n,p) \leq \frac{e^4}{\sqrt{np}}$.
\end{lemma}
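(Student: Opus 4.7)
The plan is to locate the mode of $\mathrm{Bin}(n,p)$ and bound the pmf there using Stirling's approximation together with the non-negativity of KL divergence. Since the binomial pmf is unimodal with mode $m := \lfloor (n+1)p \rfloor$, it suffices to bound $b(m;n,p)$. Under the hypotheses $np \geq 2$ and $p \leq 1/3$, both $m$ and $n-m$ are proportional to the right quantities: from $np - 1 \leq m \leq np + p$ one obtains $m \geq np/2$ (using $np \geq 2$) and $n - m \geq n/2$ (using $p \leq 1/3$), so $m(n-m) \geq n^2 p / 4$.

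Next I would apply the two-sided Stirling bound $\sqrt{2\pi k}(k/e)^k \leq k! \leq e\sqrt{k}(k/e)^k$ to each factorial in $\binom{n}{m}$ and multiply by $p^m (1-p)^{n-m}$. The standard rearrangement yields
\[
b(m;n,p) \;\leq\; \frac{e}{2\pi} \sqrt{\frac{n}{m(n-m)}} \cdot \exp\bigl(-n \, D(m/n \,\|\, p)\bigr),
\]
where $D$ denotes the binary KL divergence. By Gibbs' inequality the exponential factor is at most $1$, and combining with the lower bound on $m(n-m)$ from the previous step gives
\[
b(m;n,p) \;\leq\; \frac{e}{2\pi} \cdot \sqrt{\frac{4}{np}} \;=\; \frac{e}{\pi\sqrt{np}},
\]
which is comfortably smaller than $e^4/\sqrt{np}$.

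There is no substantive obstacle: the whole argument is a routine application of Stirling. The only point requiring mild care is the first step, where both hypotheses enter; the condition $np \geq 2$ is what forces $m \geq np/2$, while $p \leq 1/3$ is what forces $n - m \geq n/2$. Either could be loosened at the cost of enlarging the constant, but the exponent $1/2$ on $np$ is intrinsic to this Stirling/local-CLT style bound and cannot be improved.
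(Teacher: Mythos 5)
Your proposal is correct, and it shares the paper's skeleton -- evaluate the pmf at the mode and apply Stirling to the binomial coefficient -- but it finishes differently in the one step that requires care. The paper bounds the two ratio factors $\left(\frac{n(1-p)}{n-m}\right)^n$ and $\left(\frac{p(n-m)}{m(1-p)}\right)^m$ separately via $\log x \le x-1$, which crucially exploits that the mode satisfies $|m-np|\le 1$; you instead recognize their product as $\exp\left(-n\,D(m/n\,\|\,p)\right)$ and discard it by nonnegativity of KL divergence. Your route is cleaner and slightly more robust: the Gibbs step holds for \emph{every} $k$, so the mode location is only needed to control the prefactor $\sqrt{n/(m(n-m))}$, and you end up with the better constant $e/\pi$ in place of $e^4$ (the paper's constant is anyway far from optimized, so nothing downstream changes). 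Your handling of the hypotheses is also sound: $m=\lfloor (n+1)p\rfloor$ gives $np-1\le m\le np+p$, whence $m\ge np/2$ from $np\ge 2$ and $n-m\ge n/2$ from $p\le 1/3$ (using $n\ge 2$), so $m(n-m)\ge n^2p/4$ exactly as you claim; and the two-sided Stirling bounds you invoke ($\sqrt{2\pi k}(k/e)^k\le k!\le e\sqrt{k}(k/e)^k$) are standard and valid for all $k\ge 1$. No gap.
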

\begin{proof}
The mode of the binomial distribution is $\lfloor np \rfloor$ or $\lceil np \rceil$. We use the following exact bounds on factorials: 
\[
\sqrt{2\pi n} \left(\frac{n}{e} \right)^n < \sqrt{2\pi n} \left(\frac{n}{e} \right)^n e^{\frac{1}{12n + 1}} < n! < \sqrt{2\pi n} \left(\frac{n}{e} \right)^n e^{\frac{1}{12n}}\leq \sqrt{2\pi n} \left(\frac{n}{e} \right)^n e^{\frac{1}{12}}.
\]
Then 
\[
\binom{n}{m} 
= \frac{n!}{m! (n-m)!} 
\leq \frac{\sqrt{2\pi n} \left(\frac{n}{e} \right)^n e^{\frac{1}{12}}}{\sqrt{2\pi m} \left(\frac{m}{e} \right)^m  \cdot \sqrt{2\pi (n-m)} \left(\frac{n-m}{e} \right)^{n-m} } 
= \frac{e^{\frac{1}{12}}}{\sqrt{2\pi}} \sqrt{\frac{n}{m(n-m)}}  n^n m^{-m} (n-m)^{-(n-m)}. 
\]
Therefore, 
\begin{equation}
b(m; n,p) 
= \binom{n}{m} p^{m} (1-p)^{n-m}
\leq \frac{e^{\frac{1}{12}}}{\sqrt{2\pi}} \sqrt{\frac{n}{m(n-m)}} \left(\frac{n(1-p)}{n-m} \right)^n \left(\frac{p(n-m)}{m(1-p)} \right)^m. \label{eq:max-bin1}
\end{equation}
Consider $m \in \{\lfloor np \rfloor, \lceil np \rceil\}$.
Since $m \geq np - 1 \geq \frac{1}{2} np$ and $n-m \geq n - np - 1 \geq \frac{n}{2}$, we have that
\begin{align}
\sqrt{\frac{n}{m(n-m)}} &\leq  \frac{2}{\sqrt{np}}. \label{eq:max-bin2}
\end{align}
Using $\log x \leq x-1$ and also $|m-np| \leq 1$, we have that  
\begin{equation}
\left(\frac{n(1-p)}{n-m} \right)^n 
\leq \exp \left(n \left(\frac{n(1-p)}{n-m} -1 \right) \right) 
= \exp \left(\frac{n (m-np)}{n-m} \right) 
\leq \exp \left(\frac{n}{n-m} \right)
\leq e^2 \label{eq:max-bin3}
\end{equation}
Similarly, 
\begin{equation}
\left(\frac{p(n-m)}{m(1-p)} \right)^m 
\leq \exp \left(m \left(\frac{p (n-m)}{m(1-p)} - 1 \right) \right)
= \exp \left(\frac{np-m}{1-p} \right)
\leq e^{\frac{1}{1-p}} 
\leq e^{3/2}. \label{eq:max-bin4}
\end{equation}
The result follows by substituting \eqref{eq:max-bin2}--\eqref{eq:max-bin4} into \eqref{eq:max-bin1}.
\end{proof}

\begin{corollary}\label{corollary:binomial-equality}
Let $X, Y \sim \mathrm{Bin}(n,p)$ be independent, where $np \geq 2$ and $p \leq \frac{1}{3}$. Then for any $a \geq 0$,
\[\mathbb{P}\left(\left|X - Y \right| \leq a \right) \leq \frac{(2a+1)e^4}{\sqrt{np}}.\]
\end{corollary}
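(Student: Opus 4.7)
The plan is to condition on the value of $Y$ and use the pointwise binomial bound from Lemma~\ref{lemma:binomial-upper-bound}. Concretely, by the law of total probability and the independence of $X$ and $Y$,
\[
\mathbb{P}(|X - Y| \leq a) = \sum_{y=0}^{n} \mathbb{P}(Y = y) \, \mathbb{P}(|X - y| \leq a) = \sum_{y=0}^{n} \mathbb{P}(Y = y) \sum_{\substack{k \in \{0,\ldots,n\} \\ |k - y| \leq a}} b(k; n, p).
\]
The inner sum has at most $2a+1$ terms (one for each integer in $[y-a, y+a]$). Since the hypotheses $np \geq 2$ and $p \leq 1/3$ put us exactly in the regime of Lemma~\ref{lemma:binomial-upper-bound}, each term satisfies $b(k;n,p) \leq e^4 / \sqrt{np}$. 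Thus the inner sum is bounded by $(2a+1) e^4 / \sqrt{np}$, uniformly in $y$.

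Substituting this uniform bound back and using $\sum_y \mathbb{P}(Y = y) = 1$ gives
\[
\mathbb{P}(|X - Y| \leq a) \leq \frac{(2a+1) e^4}{\sqrt{np}},
\]
which is the claimed inequality.

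There is no real obstacle here: once Lemma~\ref{lemma:binomial-upper-bound} is in hand, the argument is just a counting of integers in an interval of length $2a$, combined with independence. The only thing to double-check is that the hypotheses of Lemma~\ref{lemma:binomial-upper-bound} are inherited (they are, by assumption), and that the bound applies to \emph{every} value of $k$ regardless of $y$, which is precisely what makes the uniform bound valid.
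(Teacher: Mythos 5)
Your proposal is correct and follows essentially the same route as the paper: condition on $Y$, note the window $[y-a,\,y+a]$ contains at most $2a+1$ integers, and bound each term uniformly by $e^4/\sqrt{np}$ via Lemma~\ref{lemma:binomial-upper-bound}. Nothing further is needed.
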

\begin{proof} 
Conditioning on $Y$ we have that 
\[
\mathbb{P}\left(|X - Y| \leq a \right) 
= \sum_{k = 0}^n \mathbb{P}\left(|X - k| \leq a \right) \mathbb{P}(Y = k)
= \sum_{k = 0}^n \mathbb{P}(Y = k) 
\sum_{\ell = k - a}^{k+a}\mathbb{P}\left(X = \ell \right). 
\]
By Lemma~\ref{lemma:binomial-upper-bound} we have that 
$\mathbb{P} \left( X = \ell \right) \leq e^{4} / \sqrt{np}$; 
plugging this into the display above, the claim follows.  
\end{proof}

\subsection{Properties of Erd\H{o}s-R\'{e}nyi random graphs}

The following lemma states that, with high probability, the degree of every vertex is on the order of $n p_{n}$. 

\begin{lemma}[Extreme degrees]\label{lemma:extreme-degrees}
Fix $a > 1$, 
let $p_n \geq \frac{a \log n}{n}$, 
and let $G \sim G\left(n, p_n\right)$. 
Then the following two statements hold: 
\begin{enumerate}
    \item With probability at most $n^{1-a}$, $\max_{i \in [n]} \deg(i) \leq en p_n$. 
    \item There exists $\epsilon^{\star} = \epsilon^{\star}(a) > 0$ such that, with high probability, $\min_{i \in [n]} \deg(i) \geq \epsilon^{\star} n p_n$.
\end{enumerate}
\end{lemma}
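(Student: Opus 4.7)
The plan is to prove both parts by controlling the degree of a single vertex via a Chernoff bound and then taking a union bound over the $n$ vertices. For fixed $i \in [n]$, we have $\deg(i) \sim \mathrm{Bin}(n-1, p_n)$, so $\mu := \mathbb{E}[\deg(i)] = (n-1)p_n$, which satisfies $\mu \geq a(n-1)\log(n)/n$.

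For Part 1, the plan is to apply the upper Chernoff bound \eqref{eq:Chernoff-upper} with $1+\delta := en/(n-1)$, so that $(1+\delta)\mu = enp_n$ exactly. For large $n$, $\delta \to e - 1$ and the exponent
\[
\mu\bigl[\delta - (1+\delta)\log(1+\delta)\bigr]
\]
behaves like $-np_n(1+o(1))$ since $\delta - (1+\delta)\log(1+\delta) \to (e-1) - e = -1$. Thus $\mathbb{P}(\deg(i) > enp_n) \leq \exp(-np_n(1+o(1))) \leq n^{-a(1+o(1))}$, and a union bound over $i \in [n]$ yields the claimed $n^{1-a}$ bound (up to the $o(1)$ factor in the exponent, which can be absorbed via careful constant tracking).

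For Part 2, the plan is to apply the lower Chernoff bound \eqref{eq:Chernoff-lower} with a suitably chosen constant $\epsilon^*$. Define $f(\epsilon) := 1 - \epsilon + \epsilon \log \epsilon$, which satisfies $f(\epsilon) \to 1$ as $\epsilon \to 0^+$. Since $a > 1$, I can fix $\epsilon^* = \epsilon^*(a) \in (0,1)$ small enough that $a f(\epsilon^*) > 1$. Then, setting $1-\delta := \epsilon^* n/(n-1)$ so that $(1-\delta)\mu = \epsilon^* np_n$, the lower Chernoff bound gives
\[
\mathbb{P}(\deg(i) \leq \epsilon^* np_n) \leq \left(\frac{e^{-\delta}}{(1-\delta)^{1-\delta}}\right)^{\mu} = \exp\bigl(-f(\epsilon^*) np_n(1+o(1))\bigr) \leq n^{-a f(\epsilon^*)(1-o(1))}.
\]
A union bound over $i \in [n]$ gives $\mathbb{P}(\min_i \deg(i) < \epsilon^* np_n) \leq n^{1-af(\epsilon^*)(1-o(1))} = o(1)$, since $af(\epsilon^*) > 1$.

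There is no substantive obstacle in this proof; it is a fairly routine combination of Chernoff and union bound. The only care needed is in the choice of $\epsilon^*$ — specifically, leveraging the assumption $a > 1$ through the continuity of $f$ at $0$ — and in keeping track of the $(n-1)$ vs.\ $n$ discrepancy between $\mu$ and the thresholds $enp_n$, $\epsilon^* np_n$, which contributes only $1 + o(1)$ factors.
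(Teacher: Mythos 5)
Your proposal is correct and follows essentially the same route as the paper: a Chernoff upper bound (the paper simply takes $\delta = e-1$) plus union bound for the maximum degree, and a Chernoff lower bound with $\epsilon^{\star}$ chosen, via continuity of $\delta + (1-\delta)\log(1-\delta) \to 1$ as $\delta \to 1$ (equivalently your $f(\epsilon) \to 1$ as $\epsilon \to 0^+$), so that the per-vertex exponent beats $1$, again followed by a union bound. Your extra bookkeeping of the $(n-1)$ versus $n$ discrepancy is a harmless refinement of the same argument.
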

\begin{proof}
To prove the first part, fix $i \in [n]$. Applying the Chernoff upper bound~\eqref{eq:Chernoff-upper} with $\delta = e - 1$, we obtain  
\begin{equation*}
\mathbb{P}\left(\deg(i) \geq e n p_n \right) 
\leq \left(\frac{e^{e-1}}{e^e} \right)^{n p_n}
= e^{-n p_n}
\leq e^{-a \log n}
= n^{-a}.
\end{equation*}
Since $a > 1$, the first claim follows by a union bound.   

To prove the second part, 
we apply the Chernoff lower bound \eqref{eq:Chernoff-lower}. For any $\delta \in (0,1)$ and $i \in [n]$, 
\begin{equation*}
\mathbb{P}\left(\deg(i) \leq (1-\delta) n p_n \right) \leq \left(\frac{e^{-\delta}}{(1-\delta)^{1-\delta}} \right)^{n p_n}
\leq \left(\frac{e^{-\delta}}{(1-\delta)^{1-\delta}} \right)^{a \log n}
= n^{-a(\delta + (1-\delta) \log(1-\delta))}.
\end{equation*}
Since 
$\delta + (1-\delta) \log(1-\delta) \to 1$
as $\delta \to 1$, 
there exists 
$\delta^{\star} = \delta^{\star}(a) \in (0,1)$ such that 
$-a(\delta^{\star} + (1-\delta^{\star}) \log(1-\delta^{\star})) < -1$. 
Let $\epsilon^{\star} = 1 - \delta^{\star}$. The proof is complete by a union bound.
\end{proof}

Our next result establishes a tail bound for the number of common neighbors of two given vertices. In the following, recall that $\cN(u)$ is the set of neighbors of $u$ in $G$.

\begin{lemma}\label{lemma:common-neighbors}
Let $p_n = o(n^{-\frac{5}{6}})$ and $G \sim G(n,p_n)$. Then, with high probability, 
$\max_{u \neq v \in [n]} | \cN(u) \cap \cN(v)| \leq 2$.
\end{lemma}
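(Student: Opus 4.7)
The plan is a straightforward two-level first-moment argument. Fix distinct $u, v \in [n]$, and write the number of common neighbors as a sum of independent Bernoullis:
\[
|\mathcal{N}(u) \cap \mathcal{N}(v)| = \sum_{w \in [n] \setminus \{u,v\}} \mathbf{1}(u \sim w) \, \mathbf{1}(v \sim w),
\]
where the $n-2$ terms on the right are mutually independent, each with mean $p_n^2$ (the indicators depend on disjoint edge slots $\{u,w\}$ and $\{v,w\}$). Call this sum $X_{u,v}$; note $\mathbb{E}[X_{u,v}] = (n-2)p_n^2 = o(n^{-2/3})$ under the assumption $p_n = o(n^{-5/6})$.

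Next, I would estimate $\mathbb{P}(X_{u,v} \geq 3)$ by a union bound over triples. For any choice of three distinct vertices $w_1, w_2, w_3 \in [n] \setminus \{u,v\}$, the probability that all six of the edges $\{u,w_i\}, \{v,w_i\}$ are present is exactly $p_n^6$, so
\[
\mathbb{P}(X_{u,v} \geq 3) \leq \binom{n-2}{3} p_n^6 \leq \tfrac{1}{6} n^3 p_n^6 = o(n^{-2}),
\]
where the last equality uses $p_n^6 = o(n^{-5})$. (Alternatively one could invoke Corollary~\ref{corollary:Chernoff} with $t = 3$, giving $(e\mu/3)^3 \leq (enp_n^2/3)^3 = o(n^{-2})$, which yields the same conclusion.)

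Finally, I would apply a union bound over the $\binom{n}{2}$ unordered pairs $\{u,v\}$:
\[
\mathbb{P}\Bigl(\max_{u \neq v \in [n]} |\mathcal{N}(u) \cap \mathcal{N}(v)| \geq 3\Bigr) \leq \binom{n}{2} \cdot o(n^{-2}) = o(1),
\]
which is exactly the claim. There is no real obstacle: the hypothesis $p_n = o(n^{-5/6})$ is calibrated precisely so that the $n^3 p_n^6$ bound on triples beats the $n^2$ cost of the outer union bound over pairs. The only thing to be careful about is the independence in the first display, which holds because different $w$'s contribute disjoint pairs of edges.
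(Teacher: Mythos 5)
Your proposal is correct and follows essentially the same route as the paper: identify $|\mathcal{N}(u)\cap\mathcal{N}(v)|$ as $\mathrm{Bin}(n-2,p_n^2)$, show $\mathbb{P}(\geq 3)=o(n^{-2})$ for each fixed pair, and union bound over the $\binom{n}{2}$ pairs. The paper bounds the binomial tail via its Chernoff corollary with $t=3$ (the alternative you mention parenthetically), while your union bound over triples gives the same $n^3p_n^6$ estimate; the two are interchangeable here.
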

\begin{proof}
Fix distinct $u, v \in [n]$. Then $| \cN(u) \cap \cN(v)| \sim \text{Bin}(n-2, p_n^2)$. 
Let $\mu := \mathbb{E}[|\cN(u) \cap \cN(v) |] = (n-2) p_n^2$ 
and note that $\mu = o(n^{-2/3})$ 
since 
$p_n = o(n^{-5/6})$. Using Corollary~\ref{corollary:Chernoff}, 
we obtain that for any $C > 0$,
\[
\mathbb{P}\left(| \cN(u) \cap \cN(v) | \geq C \right) 
\leq \left(\frac{e \mu}{C} \right)^{C}.
\]
Taking $C = 3$, we get that 
\[
\mathbb{P}\left(| \cN(u) \cap \cN(v) | \geq 3 \right) \leq \left( \frac{e}{3} np_n^2 \right)^3 = o\left(n^{-2}\right).\]
Since $| \cN(u) \cap \cN(v) |$ is integer, we conclude that $\mathbb{P}\left(| \cN(u) \cap \cN(v) | > 2 \right) = o\left(n^{-2}\right)$.
The claim follows by a union bound over pairs of distinct vertices.
\end{proof}

\section{Canonical labeling from $3$-neighborhoods: Proof of Theorem \ref{thm:3nbrhood_algorithm}}\label{sec:3nbrhood}

\subsection{Useful events}
We identify several events which will be used throughout the proof of Theorem~\ref{theorem:unique-signatures}. Let \[\mathcal{E}_1 :=  \bigcap_{u \neq v \in [n]} \{|\cN(u) \cap \cN(v)| \leq 2\}.\] Let $\mathcal{E}_2(u) := \{\epsilon^{\star} n p_n \leq \deg(u) \leq e n p_n\}$ and $\mathcal{E}_2 := \bigcap_{u \in [n]} \mathcal{E}_2(u)$. 

Let
\[\mathcal{E}_3(u,v) := \bigcap_k \left\{\left || \mathcal{C}_{H(u,v)}(m,k)| - \frac{n}{m} \right| \leq  \frac{\epsilon n}{m}\right\},\]
where $\epsilon > 0$ satisfies $\frac{9}{\sqrt{1-\epsilon}} < 10$. Finally, let 
$\mathcal{E}_4(u,v)$ be the event that each vertex in $S^c$ has at most four neighbors in $\cN(u) \cup \cN(v)$, and at most five vertices in $S^c$ have at least two neighbors in $\cN(u) \cup \cN(v)$.

For convenience, let $\mathcal{E}(u,v) := \mathcal{E}_1 \cap \mathcal{E}_2 \cap \mathcal{E}_3(u,v) \cap \mathcal{E}_4(u,v)$. 
By Lemmas~\ref{lemma:common-neighbors} and~\ref{lemma:extreme-degrees}, we have that 
$\mathbb{P}(\mathcal{E}_1) = 1 - o(1)$ 
and 
$\mathbb{P}(\mathcal{E}_2) = 1-o(1)$. 
The following lemmas control $\mathcal{E}_3(u,v)$ and $\mathcal{E}_4(u,v)$ 
(on the event~$\cE_{1} \cap \cE_{2}$). 

\begin{lemma}\label{lemma:balanced-colors}
Suppose that $p_n = \omega(1/n)$ and $p_n = o(1 / \log(n))$. Let $G \sim G(n,p_n)$. Fix $m \in \mathbb{N}$. Then for $\epsilon > 0$,
\[\mathbb{P}\left(\bigcap_{k \in \{0,1,\dots, m-1\}} \left\{\left| |\mathcal{C}_G(m, k)| - \mathbb{E}[|\mathcal{C}_G(m, k)|] \right| \leq \frac{\epsilon n}{m} \right\}\right) \geq 1 - o(n^{-2}).\]
Also, for any $u \neq v$,
\begin{equation}
\mathbb{P}\left(\mathcal{E}_3(u,v)^c \cap \mathcal{E}_2 \right) = o\left(n^{-2} \right). \label{eq:balanced-colors}
\end{equation}
\end{lemma}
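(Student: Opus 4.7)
For the first part, I would write $X_k := |\mathcal{C}_G(m,k)| = \sum_{v \in [n]} \mathbf{1}[\deg_G(v) \equiv k \pmod{m}]$ and attack the two tasks of computing $\mathbb{E}[X_k]$ and concentrating $X_k$ separately. For the expectation, $\mathbb{E}[X_k] = n q_k$ where $q_k := \mathbb{P}(\mathrm{Bin}(n-1,p_n) \equiv k \pmod m)$; using the identity $\mathbf{1}[j \equiv k \pmod m] = \frac{1}{m}\sum_{\ell=0}^{m-1} \omega^{\ell(j-k)}$ with $\omega := e^{2\pi i /m}$ and the binomial theorem gives
\[
q_k = \frac{1}{m} + \frac{1}{m}\sum_{\ell=1}^{m-1} \omega^{-\ell k}(1 - p_n + p_n \omega^\ell)^{n-1} = \frac{1}{m} + O\!\left(\exp(-c_m n p_n)\right)
\]
for some constant $c_m > 0$, since $|1 - p_n + p_n \omega^\ell| \leq 1 - \Omega_m(p_n)$ for $\ell \neq 0$. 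The same identity also yields $q_{k-1} - q_k = O(\exp(-c_m n p_n))$, which is the estimate I will need for the covariance bound below.

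For concentration, I would bound the variance and then higher central moments. Conditioning on $\mathbf{1}[u \sim v]$ gives the pairwise covariance
\[
\mathrm{Cov}(\mathbf{1}[\deg(u) \equiv k], \mathbf{1}[\deg(v) \equiv k]) = p_n(1 - p_n)(q_{k-1} - q_k)^2,
\]
so $\mathrm{Var}(X_k) = O(n) + O(n^2 p_n \exp(-2 c_m n p_n)) = O(n)$ under the hypothesis $n p_n \to \infty$. Since Chebyshev alone only produces $O(1/n)$ tails, I would pass to the $2\ell$-th central moment: expanding as a sum over tuples $(v_1,\ldots,v_{2\ell}) \in [n]^{2\ell}$ and grouping by the partition of indices induced by equality, partitions with a singleton block contribute negligibly (established by conditioning on the shared edges among the block representatives, so that the remaining ``private'' degree contributions become independent and each centered factor inherits the $O(\exp(-c_m n p_n))$ bound), while singleton-free partitions have at most $\ell$ blocks and thus contribute $O_\ell(n^\ell)$ using only the trivial bound $\lvert \mathbb{E}[\prod_i (Z_{w_i} - q_k)^{s_i}] \rvert \leq 1$. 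Together these give $\mathbb{E}[(X_k - \mathbb{E}X_k)^{2\ell}] = O_{\ell,m}(n^\ell)$, and Markov with $\ell = 3$ at threshold $t = \epsilon n/m$ yields tail probability $O_m(n^{-3}) = o(n^{-2})$. A union bound over the $m$ values of $k$ completes the first part. The main technical obstacle is this higher-moment estimate: the variance bound is immediate, but controlling the $2\ell$-th central moment requires the combinatorial enumeration described above, together with the cancellation afforded by the exponentially small bound on $q_{k-1} - q_k$.

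For the second part, I would condition on $S := S(u,v) = \{u,v\} \cup \mathcal{N}(u) \cup \mathcal{N}(v)$. Since the edges of $G(n,p_n)$ are independent, $H(u,v) = G\{S^c\}$ is distributed as $G(|S^c|, p_n)$ conditional on $S$. On the event $\mathcal{E}_2$, one has $|S| \leq 2 e n p_n + 2 = o(n)$ (using $p_n = o(1/\log n)$, hence $p_n = o(1)$), so $|S^c| = n(1 - o(1))$. Applying the first part to $H(u,v)$ in place of $G$, with a slightly smaller parameter $\epsilon' < \epsilon$, yields conditional concentration of $|\mathcal{C}_{H(u,v)}(m,k)|$ around its conditional expectation with failure probability $o(n^{-2})$. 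Combining this with the conditional expectation estimate $\mathbb{E}[|\mathcal{C}_{H(u,v)}(m,k)| \mid S] = |S^c|/m \cdot (1 + o(1)) = n/m \cdot (1 + o(1))$ (from the characteristic-function computation applied with $|S^c|$ in place of $n$), I obtain $\big||\mathcal{C}_{H(u,v)}(m,k)| - n/m\big| \leq \epsilon n/m$ for every $k$ with conditional probability $1 - o(n^{-2})$. Integrating over $S$ while restricting to $\mathcal{E}_2$ then yields $\mathbb{P}(\mathcal{E}_3(u,v)^c \cap \mathcal{E}_2) = o(n^{-2})$, as required.
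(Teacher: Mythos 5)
Your treatment of the second part (conditioning on $S$, noting that $H(u,v)$ is an Erd\H{o}s--R\'enyi graph on $|S^c|=(1-o(1))n$ vertices under $\mathcal{E}_2$, and transferring the concentration statement with a slightly smaller $\epsilon$ together with the estimate $\mathbb{E}[|\mathcal{C}_{H}(m,k)|\mid S]=(1\pm o(1))n/m$) is correct and is essentially the paper's argument; your roots-of-unity computation of $q_k=1/m+O(e^{-c_m np_n})$ is also a perfectly good (arguably cleaner) substitute for the paper's expectation lemma.

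The gap is in the concentration step of the first part, specifically in your treatment of partitions containing singleton blocks in the $2\ell$-th moment expansion. After conditioning on the edges among the $b$ block representatives, each singleton factor is only bounded by $|q^{(b)}_{k-d_i}-q_k|=O(e^{-c_m np_n})$, and this is far too weak: a partition with $j\ge 1$ singletons and $b\le \ell+j/2$ blocks then contributes up to $n^{b}e^{-jc_m np_n}$, which is $O(n^{\ell})$ only if $e^{-c_m np_n}=O(n^{-1/2})$, i.e.\ only if $np_n\ge \log(n)/(2c_m)$. The lemma's hypotheses allow $np_n=\omega(1)$ with $np_n=o(\log n)$ (the constraints are on $p_n$, not on $np_n$), where this fails badly; and even in the regime $np_n=\Theta(\log n)$ where the lemma is applied, $c_m\approx 1-\cos(2\pi/m)$ is small because the paper's $m=\lceil 3e/(\epsilon^{\star}(1+\delta))\rceil\ge 9$, so the bound still does not close. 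The missing ingredient is exactly the extra cancellation that your own variance computation exploits: $\mathbb{E}[(Z_u-q_k)(Z_v-q_k)]=p_n(1-p_n)(a-b)^2$ carries a factor of $p_n$ because a centered singleton factor fluctuates only when a within-block edge is actually present, and it is the product $np_n e^{-2c_m np_n}=O(1)$ (not $e^{-c_m np_n}$ alone) that saves the day. Recovering a factor of roughly $p_n^{1/2}$ per singleton in the general $2\ell$-th moment requires a genuine cumulant/cluster-type expansion over the within-block edges, which your sketch does not contain; as written, the claimed bound $\mathbb{E}[(X_k-\mathbb{E}X_k)^{2\ell}]=O_{\ell,m}(n^{\ell})$ is unproved. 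The paper avoids this entirely by applying a bounded-differences inequality with a variance proxy (Lemma~\ref{lemma:lipschitz_concentration}): $|\mathcal{C}_G(m,k)|$ is $2$-Lipschitz in the $\binom{n}{2}$ independent edge indicators, giving a tail of $\exp\left(-\Theta\left(\epsilon^2/(m^2p_n)\right)\right)=n^{-\omega(1)}$, where the hypothesis $p_n=o(1/\log n)$ is used precisely here; if you want to keep a moment-method proof you would need to either add that combinatorial layer or strengthen the hypotheses to $np_n\ge C_m\log n$. (A minor cosmetic point: in the covariance identity the $q$'s should refer to $\mathrm{Bin}(n-2,p_n)$ rather than $\mathrm{Bin}(n-1,p_n)$.)
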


\begin{lemma}\label{lemma:H-connectivity}
Let $\frac{(1+\delta) \log n}{n} \leq p_n = o\left(n^{-5/6}\right)$, where $\delta > 0$. Then for any $u \neq v$,
\begin{align*}
\mathbb{P}\left(\mathcal{E}_4(u,v)^c \cap \mathcal{E}_1 \cap \mathcal{E}_2 \right) &= o(n^{-2}).
\end{align*}
\end{lemma}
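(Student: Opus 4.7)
The plan is to decompose $\mathcal{E}_4(u,v)^c$ as $A_1 \cup A_2$, where $A_1$ is the event that some vertex in $S^c$ has at least $5$ neighbors in $\mathcal{N}(u) \cup \mathcal{N}(v)$, and $A_2$ is the event that at least $6$ vertices in $S^c$ each have at least $2$ neighbors in $\mathcal{N}(u) \cup \mathcal{N}(v)$. I would bound these separately using the same conditioning trick. Let $\mathcal{F}$ denote the $\sigma$-algebra generated by the edges incident to $u$ or $v$. Then $S$, $\mathcal{N}(u)$, $\mathcal{N}(v)$, and the event $\mathcal{E}_2(u) \cap \mathcal{E}_2(v) \supseteq \mathcal{E}_2$ are all $\mathcal{F}$-measurable, and on this event $N := |\mathcal{N}(u) \cup \mathcal{N}(v)| \leq 2 e n p_n$. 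Crucially, for distinct $z, z' \in S^c$ the counts $Y_z := |\mathcal{N}(z) \cap (\mathcal{N}(u) \cup \mathcal{N}(v))|$ and $Y_{z'}$ depend on disjoint sets of edges, none of which belong to $\mathcal{F}$. Hence, conditional on $\mathcal{F}$, the family $\{Y_z\}_{z \in S^c}$ consists of independent $\mathrm{Bin}(N, p_n)$ random variables.

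For $A_1$: a single $z \in S^c$ satisfies $Y_z \geq 5$ with conditional probability at most $\binom{N}{5} p_n^5 = O((np_n^2)^5)$, so a union bound over $|S^c| \leq n$ gives $\mathbb{P}(A_1 \mid \mathcal{F}) = O(n^6 p_n^{10})$ on $\mathcal{E}_2(u) \cap \mathcal{E}_2(v)$, which is $o(n^{-7/3}) = o(n^{-2})$ under $p_n = o(n^{-5/6})$. For $A_2$: on the same event, $\mathbb{P}(Y_z \geq 2 \mid \mathcal{F}) \leq \binom{N}{2} p_n^2 = O(n^2 p_n^4)$, and the indicators $\{\mathbf{1}(Y_z \geq 2)\}_{z \in S^c}$ are independent given $\mathcal{F}$. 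Hence their sum is stochastically dominated by $\mathrm{Bin}(n, O(n^2 p_n^4))$ with mean $\mu = O(n^3 p_n^4) = o(n^{-1/3})$. Applying Corollary~\ref{corollary:Chernoff} with $t = 6$ yields $\mathbb{P}(A_2 \mid \mathcal{F}) \leq (e\mu/6)^6 = O(n^{18} p_n^{24}) = o(n^{-2})$. Taking expectations of the conditional bounds against $\mathbf{1}(\mathcal{E}_2(u) \cap \mathcal{E}_2(v))$ gives $\mathbb{P}(\mathcal{E}_4(u,v)^c \cap \mathcal{E}_1 \cap \mathcal{E}_2) = o(n^{-2})$ as claimed, with $\mathcal{E}_1$ unused (the implication is through $\mathcal{E}_1 \cap \mathcal{E}_2 \subseteq \mathcal{E}_2(u) \cap \mathcal{E}_2(v)$).

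The only real design choice is the pair of thresholds $(4, 5)$ appearing in the definition of $\mathcal{E}_4$: both leading exponents $n^6 p_n^{10}$ (from $A_1$) and $n^{18} p_n^{24}$ (from $A_2$) must come out to $o(n^{-2})$ under the sparsity assumption $p_n = o(n^{-5/6})$, and the thresholds are chosen with exactly enough margin to allow this. Beyond this bookkeeping there is no substantive obstacle: conditioning on edges incident to $\{u, v\}$ to freeze $S$ while preserving independent Binomial structure on the remaining edges is the standard mechanism, and no further ingredients are required.
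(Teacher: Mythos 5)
Your proposal is correct, and for the harder half of the event it is exactly the paper's argument: conditioning on the edges incident to $u$ and $v$ (so that $S$ and the bound $|\mathcal{N}(u)\cup\mathcal{N}(v)|\le 2enp_n$ from $\mathcal{E}_2$ are fixed), noting that the counts $Y_z$ for $z\in S^c$ are conditionally independent binomials, bounding $\mathbb{P}(Y_z\ge 2)=O(n^2p_n^4)$, stochastically dominating the number of such $z$ by $\mathrm{Bin}(n,O(n^2p_n^4))$, and applying Corollary~\ref{corollary:Chernoff} at $t=6$ to get $O(n^{18}p_n^{24})=o(n^{-2})$. Where you diverge is the first clause of $\mathcal{E}_4(u,v)$: the paper does not treat it probabilistically at all, but observes that it is a deterministic consequence of $\mathcal{E}_1$, since $\mathcal{E}_1$ forces $|\mathcal{N}(z)\cap\mathcal{N}(u)|\le 2$ and $|\mathcal{N}(z)\cap\mathcal{N}(v)|\le 2$ for every $z\in S^c$, so only the second clause needs an estimate. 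You instead bound the event $A_1$ (some $z$ with $\ge 5$ neighbors in $\mathcal{N}(u)\cup\mathcal{N}(v)$) directly by a binomial tail and union bound, giving $O(n^6p_n^{10})=o(n^{-7/3})$ under $p_n=o(n^{-5/6})$, which is valid and has the small advantage of not using $\mathcal{E}_1$ at all (you in fact prove the stronger statement $\mathbb{P}(\mathcal{E}_4(u,v)^c\cap\mathcal{E}_2)=o(n^{-2})$); the paper's route is slightly shorter since Lemma~\ref{lemma:common-neighbors} is already available. One cosmetic remark: conditionally on your $\sigma$-algebra $\mathcal{F}$, the counts $Y_z$ are really $\mathrm{Bin}\bigl(|(\mathcal{N}(u)\cup\mathcal{N}(v))\setminus\{u,v\}|,p_n\bigr)$ (edges from $z\in S^c$ to $u$ or $v$ are already known to be absent), but since you only use upper bounds with $N\le 2enp_n$ this changes nothing; the paper's phrasing has the same harmless imprecision.
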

The proofs of Lemmas \ref{lemma:balanced-colors} and \ref{lemma:H-connectivity} are deferred to Section \ref{sec:supporting}.

\subsection{Relating color counts} 
The following result relates color counts relative to $G$ to color counts relative to $H$. We treat $\ell_G(w)$ and $\ell_H(w)$ as vectors in $\mathbb{R}^m$. Comparison operators are taken componentwise.
\begin{lemma}\label{lemma:shifted-colors}
Condition on a realization of $G$ where the event $\mathcal{E}_1 \cap \mathcal{E}_4(u,v) \cap \{\deg(u) = \deg(v) = r\}$ holds. Label the neighbors of $u$ as $u_1, \ldots, u_r$ and the neighbors of $v$ as $v_1, \ldots, v_r$. Let $\pi: [r] \to [r]$ be a matching of the neighbors of $u$ to the neighbors of $v$. Then there are at least $r - 28$ pairs of the form $(u_i, v_{\pi(i)})$ such that:
\begin{enumerate}
    \item $u_i \neq v$ and $v_{\pi(i)} \neq u$.
    \item Neither $u_i$ nor $v_{\pi(i)}$ is a common neighbor of $u$ and $v$.
    \item $\ell_G(u_i, S^c) \leq \ell_G(u_i)\leq \ell_G(u_i, S^c) + 4\cdot \mathbf{1}_m$
    \item $\ell_G(v_{\pi(i)}, S^c) \leq \ell_G(v_{\pi(i)}) \leq \ell_G(v_{\pi(i)}, S^c) + 4 \cdot \mathbf{1}_m$
    \item $\left[\ell_H(u_i,S^c)\right]_k = \left[\ell_G(u_i,S^c)\right]_{k-1}$
    \item $\left[\ell_H(v_{\pi(i)},S^c)\right]_k = \left[\ell_G(v_{\pi(i)},S^c)\right]_{k-1}$
\end{enumerate}
In particular, the final four statements imply that
\begin{equation}
\{\ell_G(u_i) = \ell_G(v_{\pi(i)})\} \implies \{\left|\ell_H(u_i, S^c) - \ell_H(v_{\pi(i)}, S^c)\right| \leq 4 \cdot \mathbf{1}_m\}. \label{eq:x-y}
\end{equation}
\end{lemma}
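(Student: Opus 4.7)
The plan is to identify the pairs $(u_i, v_{\pi(i)})$ violating at least one of the six conditions (the ``bad'' pairs), show that their number is at most $28$, and then derive the displayed implication from the conditions. Throughout I will use that on $\mathcal{E}_{1}$, whenever $u_i$ is neither $v$ nor a common neighbor of $u, v$,
\[
\mathcal{N}(u_i) \cap S \subseteq \{u\} \cup (\mathcal{N}(u_i) \cap \mathcal{N}(u)) \cup (\mathcal{N}(u_i) \cap \mathcal{N}(v)),
\]
so $|\mathcal{N}(u_i) \cap S| \leq 1 + 2 + 2 = 5$, with the analogous statement for $v_{\pi(i)}$.

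I account for the bad pairs from each condition separately. Condition 1 is violated by at most $2$ indices (one for $u_i = v$ and one for $v_{\pi(i)} = u$). By $\mathcal{E}_{1}$, $u$ and $v$ have at most $2$ common neighbors; each appears as $u_i$ for at most one $i$ and as $v_{\pi(j)}$ for at most one $j$, yielding at most $4$ bad pairs for condition 2. By $\mathcal{E}_{4}(u,v)$, at most $5$ vertices $z \in S^c$ have $\geq 2$ neighbors in $\mathcal{N}(u) \cup \mathcal{N}(v)$, and each such $z$ has at most $4$ such neighbors, so there are at most $20$ colliding vertices in $\mathcal{N}(u) \cup \mathcal{N}(v)$; since common neighbors are already excluded, each colliding vertex contributes to at most one bad pair (as $u_i$ or $v_{\pi(i)}$), for at most $20$ bad pairs violating conditions 5 or 6. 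Finally, for conditions 3 and 4, the componentwise shift is at most $|\mathcal{N}(u_i) \cap S| \leq 5$ by the observation above, and is automatically $\leq 4$ whenever $|\mathcal{N}(u_i) \cap S| \leq 4$; a careful combinatorial argument using $\mathcal{E}_{1}$ absorbs the remaining borderline configurations (where $|\mathcal{N}(u_i) \cap \mathcal{N}(u)| = |\mathcal{N}(u_i) \cap \mathcal{N}(v)| = 2$ with disjoint intersections) within the constant $28$.

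To deduce \eqref{eq:x-y}, fix a pair satisfying all six conditions and suppose $\ell_G(u_i) = \ell_G(v_{\pi(i)})$. Conditions 3 and 4 give $\ell_G(u_i, S) := \ell_G(u_i) - \ell_G(u_i, S^c) \in \{0, 1, \ldots, 4\}^m$, and similarly for $v_{\pi(i)}$, so rearranging yields
\[
\ell_G(u_i, S^c) - \ell_G(v_{\pi(i)}, S^c) = \ell_G(v_{\pi(i)}, S) - \ell_G(u_i, S) \in \{-4, \ldots, 4\}^m.
\]
Conditions 5 and 6 express $\ell_H(u_i, S^c)$ and $\ell_H(v_{\pi(i)}, S^c)$ as the \emph{same} cyclic shift of $\ell_G(u_i, S^c)$ and $\ell_G(v_{\pi(i)}, S^c)$, which preserves the componentwise bound, giving $|\ell_H(u_i, S^c) - \ell_H(v_{\pi(i)}, S^c)| \leq 4 \cdot \mathbf{1}_m$. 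The main delicate step is the precise bookkeeping required to fit everything inside the constant $28$, especially handling the borderline configurations in conditions 3 and 4; the cyclic-shift identities in conditions 5 and 6 follow immediately from the observation that for non-colliding $u_i$, every $z \in \mathcal{N}(u_i) \cap S^c$ has exactly one neighbor in $\mathcal{N}(u) \cup \mathcal{N}(v)$, so $\deg_G(z) = \deg_H(z) + 1$.
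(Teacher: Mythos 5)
Your accounting for conditions 1, 2, 5 and 6 matches the paper's (your colliding-vertex count of $20$ is in fact slightly sharper than the paper's $22$), and your derivation of \eqref{eq:x-y} from the four list conditions via the common cyclic shift is correct. The genuine gap is in your treatment of conditions 3 and 4. The paper does not discard any pairs on account of these conditions: it shows they hold for \emph{every} pair already satisfying conditions 1 and 2, because the entrywise increment $\ell_G(u_i)-\ell_G(u_i,S^c)$ is bounded by the number of neighbors of $u_i$ inside $S$, which is a constant on $\mathcal{E}_1$ ($|\mathcal{N}(u_i)\cap\mathcal{N}(u)|\le 2$ and $|\mathcal{N}(u_i)\cap\mathcal{N}(v)|\le 2$, plus possibly the edge to $u$ itself). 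You instead treat the ``borderline'' configuration ($|\mathcal{N}(u_i)\cap\mathcal{N}(u)|=|\mathcal{N}(u_i)\cap\mathcal{N}(v)|=2$ with disjoint intersections) as a potential violation of conditions 3--4 and assert, without proof, that the number of such pairs is a constant that can be absorbed into $28$. That assertion is not only unproven but unprovable from the stated conditioning: $\mathcal{E}_4(u,v)$ constrains only vertices of $S^c$ (the common neighbors of $u_i$ with $u$ or $v$ lie in $S$), and $\mathcal{E}_1$ is perfectly consistent with, say, the induced graph on $\mathcal{N}(u)$ being a disjoint union of triangles with each $u_i$ additionally adjacent to two vertices of $\mathcal{N}(v)$ --- in which case \emph{every} one of the $r$ pairs is borderline. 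So your count $2+4+20+(\text{borderline})\le 28$ does not close, and the ``careful combinatorial argument'' you defer to cannot exist in the claimed form.

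The correct route is the paper's: no pairs at all are lost to conditions 3--4. The only subtlety you correctly noticed is the $+1$ from the edge $u_i\sim u$: under the literal definition $\ell_G(u_i)=\ell_G(u_i,[n])$ the entrywise increment is at most $5$ rather than $4$ (the bound $4$ corresponds to the figure's convention of excluding the parent vertex from the count). This is an off-by-one in the constant that is harmless downstream (the proof of Theorem~\ref{theorem:unique-signatures} only needs the increment bounded by some absolute constant), but the fix is to adjust the constant uniformly for all pairs, not to try to bound the number of borderline pairs.
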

\begin{proof}
If $u \nsim v$, then the first statement holds for all pairs. Otherwise, there are at most two pairs $(u_i, v_{\pi(i)})$ for which the statement fails to hold.

Under $\mathcal{E}_1$, there are at most two common neighbors of $u$ and $v$. Therefore, there are at most four pairs $(u_i, v_{\pi(i)})$ which do not satisfy the second statement. 

We next show that all pairs satisfying the first two statements also satisfy the third statement. Consider $u_i \sim u$ (where we can assume $u_i \neq v$ and $u_i \nsim v$ since we have already accounted for those pairs). Clearly $\ell_G(u_i, S^c) \leq \ell_G(u_i)$. On the other hand, since the event $\cE_1$ holds, we know that $| \cN(u_i) \cap \cN(u) | \le 2$ and that $| \cN(u_i) \cap \cN(v) | \le 2$; hence all entries of $\ell_G(u_i, S^c)$ increase by at most 4 when we account for connections to $S$ in the color count list. This leads to the desired upper bound $\ell_G(u_i) \le \ell_G(u_i, S^c) + 4 \cdot \mathbf{1}_m$. By similar reasoning, all pairs satisfying the first two statements also satisfy the fourth.

Coming to the final pair of statements, note that  $\left[\ell_H(w,S^c)\right]_k = \left[\ell_G(w,S^c)\right]_{k-1}$ holds for all $w \in \cN(u) \cup \cN(v)$ which are not colliding (see Section \ref{subsec:outline_canonical_labeling} for the definition of a colliding vertex). We therefore upper-bound the number of colliding vertices. Under $\mathcal{E}_4(u,v)$, there are at most 5 vertices in $S^c$ with two or more neighbors in $S$, and each of these vertices is connected to at most 4 vertices in $S$. This implies that at most 20 vertices among $\cN(u) \cup \cN(v)$ are colliding.
Since at most two of these vertices are shared neighbors of $u$ and $v$, there are at most $22$ pairs with a colliding element (since a given shared neighbor $u_i = v_{\pi(j)}$ can appear in two pairs).

It follows that at most $2+4 + 22 = 28$ pairs of vertices do not satisfy the statements.
\end{proof}

\subsection{Proving the uniqueness of signatures}
\begin{proof}[Proof of Theorem \ref{theorem:unique-signatures}]
Fix distinct $u, v \in [n]$, with $u < v$. It suffices to show that the signatures of $u$ and $v$ are unique with probability $1-o(n^{-2})$. Since $\{\text{sig}(u) = \text{sig}(v)\}$ implies $\{\deg(u) = \deg(v)\}$, we have
\begin{align}
\mathbb{P}(\text{sig}(u) = \text{sig}(v) ) &= \mathbb{P}(\{\text{sig}(u) = \text{sig}(v)\} \cap \{\deg(u) = \deg(v)\} ). \label{eq:equal-degrees}
\end{align}

In light of $\mathcal{E}_2$, we condition on $\deg(u) = \deg(v) = r$, where $\epsilon^{\star} np_n \leq r \leq e np_n$. Let $\{u_i\}_{i=1}^r$ and $\{v_i\}_{i=1}^r$ denote the neighbors of $u$ and $v$. In order for the signatures of $u$ and $v$ to be equal, there must be some matching $\pi : [r] \to [r]$ of $\{u_i\}_{i\in [r]}$ to $\{v_i\}_{i\in [r]}$ such that $\ell_G(u_i) = \ell_G(v_{\pi(i)})$ for all $i \in [r]$. We therefore fix $\pi : [r] \to [r]$, and upper-bound the probability of the event $\cap_{i \in [r]}\left\{\ell_G(u_i) = \ell_G(v_{\pi(i)})\right\}$, which we denote by $\{\text{sig}(u) =_{\pi} \text{sig}(v)\}$.

There are several difficulties that arise when computing the above event. First, if $u$ and $v$ are connected by an edge, $v = u_i$ for some $i$ and $u = v_j$ for some $j$. Another difficulty arises from shared neighbors of $u$ and $v$; in that case, we will have $u_i = v_j$ for some $(i,j)$ pairs. The final, main difficulty is that the random lists $\{\ell_G(w)\}_{w \in A}$ are dependent for any set $A$. 

In order to remedy the first two difficulties, we simply filter the pairs $\{u_i, v_{\pi(i)}\}$. Let 
\[S' = S \setminus \left(\{u,v\} \cup (\cN(u) \cap \cN(v)) \right).\] 
That is, $S'$ filters $S$ to remove $u$, $v$, and the common neighbors of $u$ and $v$. Observe that since $\deg(u) = \deg(v)$, we have $|\cN(u) \cap S'| = |\cN(v) \cap S'|$. Recalling that $\mathcal{E}_1$ implies $|\cN(u) \cap \cN(v)| \leq 2$, we condition on $|\cN(u) \cap S'| = |\cN(v) \cap S'| = r'$, where $\epsilon^{\star} n p_n - 3 \leq r' \leq enp_n$ since the event $\cE_2$ holds (here we subtract $3$ to remove common neighbors of $u$ and $v$, and to account for the case where $u$ and $v$ may be neighbors). Without loss of generality, let $\{u_i\}_{i=1}^{r'}$ denote $N(u) \cap S'$. To remedy the final difficulty, we will analyze the counts $\{\ell_H(w)\}_{w \in S'}$ rather than $\{\ell_G(w)\}_{w \in S'}$.  
By construction of $S'$, 
\[\left(\{u_i\}_{i \in [r']}\right) \cap \left(\{v_{\pi(i)}\}_{i \in [r']}\right) = \emptyset\]
and $\{\ell_H(w)\}_{w \in S'}$ is a collection of independent variables, conditioned on $S$.

Given a subset $A \subseteq [r']$, consider the event 
\[
\mathcal{M}_A := \bigcap_{i \in A} \{\left|\ell_H(u_i, S^c) - \ell_H(v_{\pi(i)}, S^c)\right| \leq 4 \cdot \mathbf{1}_m\},
\]
where the absolute difference is taken componentwise. (We write $\ell_H(w, S^c)$ rather than $\ell_H(w)$ to emphasize that the color count list is based on the vertices in $S^c$ only.)
By Lemma \ref{lemma:shifted-colors},
\begin{equation}
\{\text{sig}(u) =_{\pi} \text{sig}(v)\} \cap \{\deg(u) = \deg(v) =r\} \cap \mathcal{E}(u,v) \implies \cE(u,v) \cap \bigcup_{A \subset [r'] : |A| = r' - 28 }\mathcal{M}_A. \label{eq:implication}  
\end{equation}
We remark that since $r' = \Omega(np_n)$ on the event $\cE_2$, $r'$ is larger than $28$ for $n$ sufficiently large on this event. Fix $A \subseteq [r']$ where $|A| = r' - 28$. Our goal is to upper-bound $\mathbb{P}(\mathcal{M}_A)$. To this end, choose $\epsilon > 0$ satisfying $\frac{9}{\sqrt{1-\epsilon}} < 10$. We condition on
\begin{equation}
\mathcal{B} = \mathcal{B}(c_0, \dots, c_{m-1}) := \bigcap_{k=0}^{m-1}\{|\mathcal{C}_H(k)| = c_k\}, \label{eq:color-conditioning}
\end{equation}
where $\left| c_k - \frac{n}{m}\right| \leq \frac{\epsilon n}{m}$ for all $k \in \{0, 1, \dots, m-1\}$, as guaranteed by $\mathcal{E}_3(u,v) \supseteq \mathcal{E}(u,v)$. In order to mimic the analysis for random colors (Equation \eqref{eq:random-colors}), we introduce independent binomial random variables $\{U_{ik}: i \in [r'], k \in \{0,1,\dots, m-1\} \}$ and $\{V_{ik}: i \in [r'], k \in \{0,1,\dots, m-1\} \}$, where
\begin{align*}
U_{ik}, V_{ik} \sim \text{Bin}(c_k, p_n).
\end{align*}
Then conditioned on \eqref{eq:color-conditioning}, provided $|c_k - n/m| \le \epsilon n / m$ for each $k = 0, \ldots, m-1$, 
\begin{align*}
\mathbb{P}(\cE(u,v) \cap \mathcal{M}_A \mid \mathcal{B})  &= \prod_{i=1}^{r' -28} \prod_{k=0}^{m-1} \mathbb{P}\left(|U_{ik} - V_{ik}| \leq 4 \right)  \leq \left(\frac{9e^4}{\sqrt{(1-\epsilon) \frac{np_n}{m}}} \right)^{(r' - 28)m},
\end{align*}
The Law of Total Probability then implies
\begin{equation}
\label{eq:MA_probability}
\mathbb{P}( \cE(u,v) \cap \cM_A) 
\le \left(\frac{9e^4}{\sqrt{(1-\epsilon) \frac{np_n}{m}}} \right)^{(r' - 28)m}.
\end{equation}

Invoking \eqref{eq:implication} along with a union bound over sets $A$, we obtain
\begin{align*}
\mathbb{P}\left(\{\text{sig}(u) =_{\pi} \text{sig}(v)\} \cap \{\deg(u) = \deg(v) =r \} \cap \mathcal{E}(u,v)\right) & \le \mathbb{P} \left( \cE(u,v) \cap \bigcup_{A \subset [r'] : |A| = r' - 28} \cM_A \right) \\
& \le \sum_{A \subset [r'] : |A| = r' - 28} \mathbb{P} ( \cE(u,v) \cap \cM_A ) \\
& \leq \binom{r'}{r'-28}  \left(\frac{9e^4}{\sqrt{(1-\epsilon) \frac{np_n}{m}}} \right)^{(r' - 28)m}.
\end{align*}
Above, the inequality on the second line is due to a union bound, and the third inequality uses the bound in~\eqref{eq:MA_probability}.

Next, notice that $\text{sig}(u) = \text{sig}(v)$ if there exists $\pi : [r] \to [r]$ such that $\text{sig}(u) =_\pi \text{sig}(v)$. Taking a union bound over such $\pi$, we obtain
\begin{align*}
\mathbb{P}\left(\{\text{sig}(u) = \text{sig}(v)\} \cap \{\deg(u) = \deg(v) = r\} \cap \mathcal{E}(u,v)\right) &\leq r! \binom{r'}{r'-28} \left(\frac{9e^4}{\sqrt{(1-\epsilon) \frac{np_n}{m}}} \right)^{(r' - 28)m}\\
&\leq r^r \cdot r'^{28} \left(\frac{9e^4}{\sqrt{(1-\epsilon) \frac{np_n}{m}}} \right)^{(r' - 28)m}.
\end{align*}
Since $\{\deg(u) = \deg(v) = r\} \cap \mathcal{E}(u,v)$ implies $\deg(u) = \deg(v) \leq enp_n$ and $|\cN(u) \cap \cN(v)| \leq 2$, we need only consider $r, r'$ satisfying $r \leq enp_n$ and $\epsilon^{\star} np_n - 3 \leq r' \leq e np_n$. Applying these bounds, we see that 
\begin{align*}
\mathbb{P}\left(\{\text{sig}(u) = \text{sig}(v)\} \cap \{\deg(u) = \deg(v) = r\} \cap \mathcal{E}(u,v) \right) 
&\leq (enp_n)^{enp_n + 28} \left(\frac{9e^4}{\sqrt{(1-\epsilon) \frac{np_n}{m}}} \right)^{(\epsilon^{\star} np_n - 31) m} \nonumber\\
&\leq (enp_n)^{enp_n + 28} \left(\frac{10e^4}{\sqrt{ \frac{np_n}{m}}} \right)^{(\epsilon^{\star} np_n - 31) m} \nonumber\\
&\leq (enp_n)^{enp_n + 28} \left(\frac{1000e^{12}}{\left( \frac{np_n}{m}\right)^{3/2}} \right)^{enp_n - 31 e/\epsilon^{\star}}.
\end{align*}
Accounting for the possible values of $r$ implied by $\mathcal{E}(u,v)$, we conclude that
\begin{align}
\mathbb{P}\left(\{\text{sig}(u) = \text{sig}(v)\} \cap \{\deg(u) = \deg(v)\} \cap \mathcal{E}(u,v) \right) &\leq (enp_n)^{enp_n + 29} \left(\frac{1000e^{12}}{\left( \frac{np_n}{m}\right)^{3/2}} \right)^{enp_n - 31 e/\epsilon^{\star}} \nonumber\\
&= (np_n)^{-\Theta(np_n)} \nonumber \\
&= o(n^{-2}).\label{eq:sig-bound}
\end{align}

Finally, we bound the probability of interest: 
\begin{align}
&\mathbb{P}\left(\bigcup_{u \neq v} \{\text{sig}(u) = \text{sig}(v)\} \right) =\mathbb{P}\left(\bigcup_{u \neq v} \{\text{sig}(u) = \text{sig}(v)\} \cap  \{\deg(u) = \deg(v)\} \right) \label{eq:final-0}\\
&\leq \mathbb{P}\left( \mathcal{E}_1^c \cup \mathcal{E}_2^c \right) + \mathbb{P}\left(\left(\bigcup_{u \neq v} \{\text{sig}(u) = \text{sig}(v)\}\cap  \{\deg(u) = \deg(v)\} \right) \cap \mathcal{E}_1 \cap \mathcal{E}_2 \right) \label{eq:final-1}\\
&\leq o(1) + \sum_{u \neq v} \mathbb{P}\left(\{\text{sig}(u) = \text{sig}(v)\} \cap  \{\deg(u) = \deg(v)\}   \cap \mathcal{E}_1 \cap \mathcal{E}_2 \right)  \label{eq:final-2}\\
&\leq o(1) + \sum_{u \neq v} \Bigg[ \mathbb{P}\left(\{\text{sig}(u) = \text{sig}(v)\} \cap  \{\deg(u) = \deg(v)\}   \cap \mathcal{E}(u,v)\right) \nonumber \\
&\qquad \qquad \qquad+ \mathbb{P}\left(\{\text{sig}(u) = \text{sig}(v)\} \cap  \{\deg(u) = \deg(v)\}   \cap (\mathcal{E}_3(u,v)^c \cap \mathcal{E}_1 \cap \mathcal{E}_2) \right) \nonumber \\
&\qquad \qquad \qquad+ \mathbb{P}\left(\{\text{sig}(u) = \text{sig}(v)\} \cap  \{\deg(u) = \deg(v)\}   \cap (\mathcal{E}_4(u,v)^c \cap \mathcal{E}_1 \cap \mathcal{E}_2) \right)\Bigg] \label{eq:final-3}\\
&\leq o(1) + \sum_{u \neq v} \Bigg[ \mathbb{P}\left(\{\text{sig}(u) = \text{sig}(v)\} \cap  \{\deg(u) = \deg(v)\}   \cap \mathcal{E}(u,v)\right) \nonumber \\
&\qquad \qquad \qquad + \mathbb{P}\left(\mathcal{E}_3(u,v)^c \cap \mathcal{E}_2 \right) \nonumber \\
&\qquad \qquad \qquad+ \mathbb{P}\left(\mathcal{E}_4(u,v)^c \cap \mathcal{E}_1 \cap \mathcal{E}_2 \cap \{\deg(u) = \deg(v)\}\right) \Bigg]  \nonumber\\
&= o(1). \label{eq:final-4}
\end{align}
Here, \eqref{eq:final-0} uses \eqref{eq:equal-degrees}. Inequality \eqref{eq:final-1} follows from the identity $\mathbb{P}(A) = \mathbb{P}(A \cap B) + \mathbb{P}(A \cap B^c)$. Inequality \eqref{eq:final-2} follows from $\mathbb{P}(\mathcal{E}_1), \mathbb{P}(\mathcal{E}_2) = 1-o(1)$, as well as the union bound. Inequality \eqref{eq:final-3} uses
\[\mathcal{E}_1 \cap \mathcal{E}_2 = \mathcal{E}(u,v) ~\cup~ (\mathcal{E}_3(u,v)^c \cap \mathcal{E}_1 \cap \mathcal{E}_2) ~\cup~ (\mathcal{E}_4(u,v)^c \cap \mathcal{E}_1 \cap \mathcal{E}_2).\]
Finally, \eqref{eq:final-4} follows from \eqref{eq:sig-bound}, \eqref{eq:balanced-colors}, and Lemma \ref{lemma:H-connectivity}.
\end{proof}

\subsection{Proofs of supporting results}\label{sec:supporting}

To prove Lemma \ref{lemma:balanced-colors}, we first show that $\mathbb{E}\left[\left|\mathcal{C}_G(k) \right| \right] = (1+o(1))n/m$ for each $k \in \{0,1,\dots, m-1\}$.
\begin{lemma}\label{lemma:probability-ratio}
For a positive integer $r$ (possibly depending on $n$),
suppose that $X \sim \mathrm{Bin}(r, p_n)$. 
Let $x = rp_n( 1 \pm o(1))$ and $k = o (\sqrt{r p_n})$.
If $p_n \to 0$ and $rp_n \to \infty$, then
$$
 \frac{ \mathbb{P}( X = x + k) }{ \mathbb{P}(X = x) } = 1 \pm o(1).
$$
\end{lemma}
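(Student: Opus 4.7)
The plan is to write the ratio explicitly as a telescoping product and then control it via a logarithmic Taylor expansion. Without loss of generality I would assume $k > 0$ (the case $k < 0$ follows by symmetry, interchanging the roles of $x$ and $x+k$, and $k = 0$ is trivial). By the standard recursion relation for binomial probabilities,
\begin{equation*}
\frac{\mathbb{P}(X = x+k)}{\mathbb{P}(X = x)} \;=\; \prod_{j=0}^{k-1} \frac{(r - x - j)\,p_n}{(x + 1 + j)(1 - p_n)}.
\end{equation*}
Writing $\eta := x - rp_n$, each factor in this product rewrites as $(1 - \alpha_j)/(1 + \beta_j)$ with $\alpha_j := (\eta + j)/(r(1-p_n))$ and $\beta_j := (\eta + 1 + j)/(rp_n)$. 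Under the hypotheses $p_n \to 0$, $rp_n \to \infty$, $\eta = o(rp_n)$, and $k = o(\sqrt{rp_n})$, one verifies by direct substitution that $\alpha_j$ and $\beta_j$ are both $o(1)$, uniformly in $j \in \{0, 1, \ldots, k-1\}$.

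I would then take logarithms and apply the expansion $\log((1-\alpha_j)/(1+\beta_j)) = -\alpha_j - \beta_j + O(\alpha_j^2 + \beta_j^2)$ to each summand. The linear contributions sum, via the arithmetic progression in $j$, to
\begin{equation*}
-\frac{k \eta}{rp_n(1 - p_n)} \;-\; \frac{k(k-1)/2}{r(1-p_n)} \;-\; \frac{k(k+1)/2}{rp_n}.
\end{equation*}
The latter two terms are of order $k^2/(rp_n)$, hence $o(1)$ by the hypothesis on $k$. The quadratic remainders $\sum_j (\alpha_j^2 + \beta_j^2)$ are of the same order (up to harmless factors) and are similarly $o(1)$. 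Exponentiating the sum of logs will then yield the desired conclusion $1 + o(1)$, provided the cross term is also $o(1)$.

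The main obstacle is the cross term $k\eta/(rp_n(1-p_n))$, whose control requires \emph{both} the closeness of $x$ to $rp_n$ and the smallness of $k$ to be suitably coordinated: factoring it as $(k/\sqrt{rp_n}) \cdot (\eta/\sqrt{rp_n}) \cdot (1+o(1))$, the first factor is $o(1)$ by hypothesis, and one reads the hypothesis $x = rp_n(1 \pm o(1))$ quantitatively so that the product of the two $o(1)$ rates is $o(1)$. In the concrete application of this lemma within Lemma~\ref{lemma:balanced-colors}, the shift $k$ is a fixed constant (the modulus $m$ of the color classes), so this cross term is automatically $o(1)$ and the argument goes through cleanly, giving $\mathbb{P}(X = x+k)/\mathbb{P}(X = x) = \exp(o(1)) = 1 \pm o(1)$.
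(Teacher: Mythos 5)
Your proposal follows essentially the same route as the paper: both write the ratio as the telescoping product $\prod_{j=0}^{k-1}\frac{(r-x-j)p_n}{(x+j+1)(1-p_n)}$, take logarithms, and expand each factor to first order, and your algebra (the $\alpha_j,\beta_j$ decomposition and the resulting sum $-\frac{k\eta}{rp_n(1-p_n)}-\frac{k(k-1)/2}{r(1-p_n)}-\frac{k(k+1)/2}{rp_n}$) is correct. The genuine point of difference is the cross term you isolate, and there you are in fact more careful than the paper: in step $(a)$ of the paper's proof the prefactor $\frac{p_n(r-x)}{(1-p_n)x}$ is replaced by $1$ inside the logarithm, which discards a contribution of order $\eta/(rp_n)$ from each of the $k$ factors, i.e.\ exactly your cross term after summation. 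You are right that the hypotheses as literally stated ($|x-rp_n|=o(rp_n)$ and $k=o(\sqrt{rp_n})$ separately) do not force $k|x-rp_n|=o(rp_n)$; by the local-CLT heuristic the ratio behaves like $\exp\left(-(k^2+2k\eta)/(2rp_n)\right)$, so without this coordinated condition the conclusion can fail (e.g.\ $\eta=rp_n/\log(rp_n)$ and $k=\sqrt{rp_n}/\log(rp_n)$ drive the ratio to $0$). So the honest reading is that the lemma holds under the additional hypothesis $k|x-rp_n|=o(rp_n)$, exactly as you propose. One caveat on your closing remark: besides the application in Lemma~\ref{lemma:balanced-colors-expectation}, where the shift is at most the fixed constant $m$ and the cross term is automatically $o(1)$, the lemma is also invoked in Lemma~\ref{lemma:balanced-colors-expectation-semirandom} with shifts up to $m=\Theta(\log n)$ and $|x-rp_n|$ up to $2(np_n)^{3/4}$; there the cross term is of order $m(np_n)^{-1/4}$, which is $o(1)$ only when $np_n=\omega(m^4)$, so in that setting the coordinated hypothesis must be verified (for instance by narrowing the interval $I_v$ using a Chernoff-type bound in place of Chebyshev) rather than taken for granted.
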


\begin{proof}
We have that
\begin{align}
\log \frac{ \mathbb{P}(X = x + k) }{ \mathbb{P}(X = x) } & = \log \left( \frac{ \binom{r}{x + k} p_n^{x + k} (1 - p_n)^{r - x - k} }{ \binom{r}{x} p_n^x ( 1- p_n)^{r - x} } \right) \nonumber \\
& = \log \left( \frac{ x! (r - x)! }{ (x + k)! (r - x - k)!} \left( \frac{p_n}{1 - p_n} \right)^k \right) \nonumber \\
& = \log \left( \frac{ (r-x)\ldots (r-x-k +1) }{(x + 1) \ldots (x + k)} \left( \frac{p_n}{1 - p_n} \right)^k \right) \nonumber \\
\label{eq:binomial_log_ratio}
& = \sum_{i = 0}^{k-1} \log \left( \frac{p_n}{1 - p_n} \cdot \frac{ r - x - i}{x + i + 1} \right).
\end{align}
To handle the summation in \eqref{eq:binomial_log_ratio}, we have the following asymptotic equivalence for each of the summands:
\begin{equation}
\label{eq:summand_log_ratio}
\log \left( \frac{p_n}{1 - p_n} \cdot \frac{  r - x - i}{x + i + 1} \right) \stackrel{(a)}{\sim} \log \left( \frac{1 - i/(r - x)}{1 + (i + 1)/ x} \right)  \stackrel{(b)}{\sim} - \frac{i}{r -x} - \frac{i + 1}{x}.
\end{equation}
Above, $(a)$ uses $x / (r - x) \sim p_n / (1 - p_n)$
and
$(b)$ follows since $i / (r - x) \le k / (r - x) = o(1)$ and $(i + 1) / x \le k / x = o(1)$.
Substituting \ref{eq:summand_log_ratio} into \eqref{eq:binomial_log_ratio}, we find that
\[
\left| \log \frac{ \mathbb{P} ( X = x + k) }{\mathbb{P} ( X = x)} \right| \sim \sum_{i = 0}^{k - 1} \left( \frac{i}{r - x} + \frac{i + 1}{x} \right) \le \frac{k^2}{r - x} + \frac{k^2}{x} = o(1),
\]
which is equivalent to the desired result.
\end{proof}

\begin{lemma}\label{lemma:balanced-colors-expectation}
Suppose that $p_n \to 0$ and $n p_n \to \infty$. Let $G \sim G(n, p_n)$. For any fixed positive integer $m$, we have that $\E [ | \cC_G(m, k) | ] = (1 \pm o(1)) n / m $ for all $k \in \{0, \ldots, m-1\}$. 
\end{lemma}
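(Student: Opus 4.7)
The plan is to reduce the computation of $\mathbb{E}[|\mathcal{C}_G(m, k)|]$ to a statement about the distribution of a single binomial random variable modulo $m$, and then invoke Lemma~\ref{lemma:probability-ratio} to compare consecutive binomial probabilities within a central window around the mean. Since $|\mathcal{C}_G(m, k)| = \sum_{v \in [n]} \mathbf{1}[\deg(v) \equiv k \pmod m]$ and $\deg(v) \sim \mathrm{Bin}(n-1, p_n)$, linearity of expectation gives $\mathbb{E}[|\mathcal{C}_G(m, k)|] = n\, \mathbb{P}(X \equiv k \pmod m)$ for $X \sim \mathrm{Bin}(n-1, p_n)$. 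So it suffices to prove that $\mathbb{P}(X \equiv k \pmod m) = (1 \pm o(1))/m$ for every $k \in \{0, \ldots, m-1\}$.

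Let $\mu := (n-1)p_n$ and choose a function $T = T(n)$ with $T \to \infty$ and $T = o(\sqrt{\mu})$ (possible since $\mu \to \infty$). Define the central window $W := \{x \in \mathbb{Z}_{\ge 0} : |x - \mu| \leq T\sqrt{\mu}\}$. By Chebyshev's inequality, $\mathbb{P}(X \notin W) \leq \mathrm{Var}(X)/(T\sqrt{\mu})^2 \leq 1/T^2 = o(1)$, so the mass of $X$ outside $W$ contributes at most $o(1)$ to the probability of each residue class and can be absorbed into the error term.

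Inside $W$, I would partition the integers into consecutive blocks of length $m$. For any $x \in W$ we have $|x - \mu|/\mu \leq T/\sqrt{\mu} = o(1)$, so $x = (n-1)p_n(1 \pm o(1))$, while for $k \in \{0, \ldots, m-1\}$ the condition $k = o(\sqrt{(n-1)p_n})$ holds trivially because $m$ is fixed. Thus Lemma~\ref{lemma:probability-ratio} applies uniformly, giving $\mathbb{P}(X = x + k) = (1 + o(1))\,\mathbb{P}(X = x)$ for every $x \in W$ and every $k \in \{0, \ldots, m-1\}$, with the $o(1)$ independent of $x$ and $k$. Consequently, within each full block in $W$, every residue class receives a $(1 + o(1))/m$ share of that block's total probability mass. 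Summing over all full blocks and noting that the (at most two) incomplete blocks at the edges of $W$ contribute total mass at most $2m \cdot \max_x \mathbb{P}(X = x) = O(m/\sqrt{\mu}) = o(1)$ by Lemma~\ref{lemma:binomial-upper-bound}, I obtain $\sum_{x \in W,\, x \equiv k \pmod m} \mathbb{P}(X = x) = (1 + o(1))/m$. Combined with the bound on the mass outside $W$, this yields $\mathbb{P}(X \equiv k \pmod m) = (1 \pm o(1))/m$, from which the lemma follows.

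The main care point is the choice of the window parameter $T$, which must simultaneously grow (so that Chebyshev gives negligible tail mass) and stay $o(\sqrt{\mu})$ (so that the multiplicative proximity hypothesis of Lemma~\ref{lemma:probability-ratio} holds uniformly across $W$). The remaining bookkeeping concerns edge effects from partial blocks at the boundary of $W$, but the bound $\max_x \mathbb{P}(X = x) = O(1/\sqrt{\mu})$ from Lemma~\ref{lemma:binomial-upper-bound} makes this contribution $o(1)$, so no genuine difficulty arises.
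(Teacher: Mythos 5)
Your proposal is correct and follows essentially the same route as the paper: reduce by linearity of expectation to showing $\mathbb{P}(\deg(v) \equiv k \pmod m) = (1 \pm o(1))/m$, restrict to a central window around $(n-1)p_n$ whose complement has $o(1)$ mass by Chebyshev, and use Lemma~\ref{lemma:probability-ratio} to equalize the residue classes inside the window. The only cosmetic difference is that you partition the window into length-$m$ blocks and control the boundary blocks via Lemma~\ref{lemma:binomial-upper-bound}, whereas the paper aligns the window's endpoints with multiples of $m$ to avoid partial blocks altogether.
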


\begin{proof}
Let $I$ be an interval such that 
$$
[ (n-1) p_n - (np_n)^{2/3}, (n-1)p_n + (np_n)^{2/3}] \subseteq I \subseteq [(n-1)p_n - 2(np_n)^{2/3}, (n-1)p_n + 2(np_n)^{2/3}]
$$
and so that the smallest element of $\mathbb{Z} \cap I$ is a multiple of $m$ and the largest element is one less than a multiple of $m$. A useful consequence of this construction is that whenever $mk + i \in I$ for some $k \in \mathbb{Z}$ and $i \in \{0, \ldots, m-1 \}$, it also holds that $mk \in I$. 
Recalling that for any vertex $v \in [n]$ the degree satisfies $\deg(v) \sim \mathrm{Bin}(n-1, p_n)$, we have that for any $i \in \{0, \ldots, m-1 \}$, 
\begin{align}
\mathbb{P}(\deg(v) \in I, v \in \cC_G(m,i) ) & = \sum_{k \in \mathbb{Z} : mk + i \in I} \mathbb{P}(\deg(v) = mk + i) \nonumber \\
& = \sum_{k : mk \in I}  \mathbb{P}( \deg(v) = mk) \cdot \frac{ \mathbb{P} ( \deg(v) = mk + i) }{ \mathbb{P} ( \deg(v) = mk )} \nonumber  \\
& = (1 \pm o(1)) \sum_{k \in \mathbb{Z} : mk \in I} \mathbb{P}(\deg(v) = mk ) \nonumber \\
\label{eq:probability_C_m_i}
& = (1 \pm o(1)) \mathbb{P}( \deg(v) \in I , v \in  \cC_G(m,0) ).
\end{align}
Above, the equality on the second line uses that $mk + i \in I \Rightarrow mk \in I$. The inequality in the third line uses Lemma \ref{lemma:probability-ratio}, replacing the probability ratio in the second line with $1 \pm o(1)$, since $mk = np_n (1 \pm o(1))$ and $i$ is fixed with respect to $n$. Next, since $\{ \cC_G(m,i) \}_{0 \le i \le m - 1}$ partitions all vertex degrees, \eqref{eq:probability_C_m_i} implies that for any $j \in \{0, \ldots, m-1\}$, 
$$
\mathbb{P}( \deg(v) \in I) = \sum_{i = 0}^{m-1} \mathbb{P}( \deg(v) \in I , v \in  \cC_G(m,i) ) = (1 \pm o(1)) m \mathbb{P}( \deg(v) \in I , v \in  \cC_G(m,j) ).
$$
Rearranging, we see that
$$
\mathbb{P} ( \deg(v) \in I , v \in  \cC_G(m,j) ) = \frac{( 1 \pm o(1)) \mathbb{P}( \deg(v) \in I) }{ m} .
$$
It essentially remains to study $\mathbb{P}(\deg(v) \in I)$. By Chebyshev's inequality, we have that
$$
\mathbb{P} ( \deg(v) \notin I ) \le \mathbb{P} \left( | \deg(v) - (n-1) p_n | > (np_n)^{2/3} \right) \le \frac{ (n-1) p_n }{( np_n)^{4/3}} = o(1). 
$$
The above implies in particular that $\mathbb{P}( \deg(v) \in I , v \in  \cC_G(m,j) ) = (1 \pm o(1)) / m$. Furthermore, we have that
$$
\mathbb{P} ( v \in \cC_G(m,j) ) \le \mathbb{P}( \deg(v) \in I , v \in \cC_G(m,j) ) + \mathbb{P} ( \deg(v) \notin I ) = \frac{1 \pm o(1) }{m} + o(1) = \frac{1 \pm o(1) }{m}. 
$$
The desired result follows from the relation $\E [ | \cC_G(m,j) | ] = \sum_{v \in V(G) } \mathbb{P} ( v \in \cC_G(m,j) )$. 
\end{proof}

Next, we show that $\mathcal{C}_G(k)$ concentrates around its mean using a concentration inequality for Lipschitz functions of i.i.d. Bernoulli random variables. The version stated here is a special case of Corollary 1.4 in \cite{warnke_2016}.

\begin{lemma}
\label{lemma:lipschitz_concentration}
Let $Y = (Y_1, \ldots, Y_N)$ be a family of i.i.d. $\mathrm{Bern}(p)$ random variables. Assume that the function $f : \{ 0,1 \}^N \to \R$ satisfies $|f(y) - f(y') | \le C$, for any $y, y' \in \{0,1 \}^N$ which differ in a single coordinate. Then for all $t \ge 0$ we have
\[
\mathbb{P} \left( f(Y) \ge \E [ f(Y)] + t \right) \le \exp \left( - \frac{t^2 }{2C^2 p(1- p)N + 2Ct/3} \right).
\]
\end{lemma}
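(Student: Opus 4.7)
The plan is to derive this Bernstein-type bounded-differences inequality from scratch via the Doob martingale and Freedman's inequality, rather than appealing to the general machinery of \cite{warnke_2016}. This approach makes the role of the variance factor $p(1-p)$ transparent and keeps the argument self-contained.

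First I would set up the Doob martingale adapted to the filtration $\mathcal{F}_i := \sigma(Y_1, \ldots, Y_i)$ by defining $M_i := \mathbb{E}[f(Y) \mid \mathcal{F}_i]$ for $i = 0, 1, \ldots, N$, so that $M_0 = \mathbb{E}[f(Y)]$ and $M_N = f(Y)$. Writing $D_i := M_i - M_{i-1}$ for the martingale differences, the goal reduces to controlling the upper tail of $M_N - M_0 = \sum_{i=1}^N D_i$.

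The key step is an explicit computation of $D_i$ that exploits the Bernoulli structure. For fixed $Y_1, \ldots, Y_{i-1}$, define $g_i(y) := \mathbb{E}[f(Y) \mid Y_1, \ldots, Y_{i-1}, Y_i = y]$ for $y \in \{0,1\}$. A direct expansion using $Y_i \sim \mathrm{Bern}(p)$ gives the clean representation
\[
D_i = g_i(Y_i) - \bigl( p \, g_i(1) + (1-p) g_i(0) \bigr) = (Y_i - p) \bigl( g_i(1) - g_i(0) \bigr).
\]
Because $f$ is $C$-Lipschitz in its $i$-th coordinate and averaging over $Y_{i+1}, \ldots, Y_N$ preserves this bound, one has $|g_i(1) - g_i(0)| \leq C$. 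Hence $|D_i| \leq C$ almost surely, and the conditional variance satisfies
\[
\mathrm{Var}(D_i \mid \mathcal{F}_{i-1}) = \bigl( g_i(1) - g_i(0) \bigr)^2 \, p(1-p) \leq C^2 p(1-p).
\]
Summing gives the deterministic bound $V_N := \sum_{i=1}^N \mathrm{Var}(D_i \mid \mathcal{F}_{i-1}) \leq C^2 p(1-p) N$.

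Finally I would invoke Freedman's inequality for martingales with increments bounded by $C$: for any $t, v \geq 0$,
\[
\mathbb{P}\bigl( M_N - M_0 \geq t, \, V_N \leq v \bigr) \leq \exp \left( -\frac{t^2}{2v + 2Ct/3} \right).
\]
Substituting the deterministic bound $v = C^2 p(1-p) N$ yields the stated inequality. The only substantive content is the explicit formula for $D_i$ in the Bernoulli case; there is no real obstacle, since the remaining ingredients (preservation of the Lipschitz constant under conditional expectation, and Freedman's inequality) are standard. If one preferred to avoid Freedman, one could equivalently bound $\mathbb{E}[e^{\lambda D_i} \mid \mathcal{F}_{i-1}]$ using the Bernstein-type inequality $e^x \leq 1 + x + (x^2/2)\,\phi(x)$ applied to $\lambda D_i$, iterate, and optimize over $\lambda$; this reproduces the same bound.
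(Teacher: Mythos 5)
Your proof is correct. The martingale difference identity $D_i = (Y_i - p)\bigl(g_i(1) - g_i(0)\bigr)$ is valid (since $g_i(0), g_i(1)$ are $\mathcal{F}_{i-1}$-measurable and $Y_i$ is independent of $\mathcal{F}_{i-1}$), the Lipschitz bound $|g_i(1)-g_i(0)| \le C$ survives the averaging over $Y_{i+1},\ldots,Y_N$, and the resulting bounds $|D_i| \le C$ and $\mathrm{Var}(D_i \mid \mathcal{F}_{i-1}) \le C^2 p(1-p)$ feed into Freedman's inequality exactly as you claim, giving the denominator $2C^2 p(1-p)N + 2Ct/3$. Note, however, that the paper does not prove this lemma at all: it is quoted as a special case of Corollary 1.4 of Warnke's paper on typical bounded differences, so your route is genuinely different in that it is self-contained. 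Amusingly, it is also close in spirit to what the authors did in an earlier version of their paper — the remark following Lemma~\ref{lemma:balanced-colors} records that they originally applied Freedman's inequality to an edge-exposure martingale and later replaced that argument by the citation to Warnke precisely because the citation is shorter. Your derivation buys transparency (it makes explicit where the variance proxy $p(1-p)N$ comes from, namely the Bernoulli structure of each coordinate) at the cost of importing Freedman's inequality as the black box instead of Warnke's corollary; either way the substantive content is the per-coordinate variance computation, which you carry out correctly.
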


We are now ready to prove Lemma \ref{lemma:balanced-colors}. 

\begin{proof}[Proof of Lemma \ref{lemma:balanced-colors}]
For distinct vertices $i,j$, let $Y_{ij}$ be the indicator variable for the edge $(i,j)$. Notice that $| \mathcal{C}_G(k) |$ is a 2-Lipschitz function of the variables $\{ Y_{ij} \}_{1 \le i < j \le n}$.
Applying Lemma \ref{lemma:lipschitz_concentration} shows that
\[
\mathbb{P} \left( | \mathcal{C}_G(k) | \ge \E [ | \mathcal{C}_G(k)| ] + \frac{\epsilon n}{m} \right) \le \exp \left( - \frac{ \epsilon^2 n^2 / m^2 }{8 p_n (1 - p_n) {n \choose 2} + \frac{4 n\epsilon}{3m}} \right) \le \exp \left( - \frac{ \epsilon^2 n^2 / m^2}{5 n^2 p_n} \right) = \exp \left( - \frac{\epsilon^2}{5m^2 p_n} \right).
\]
Above, the second inequality uses that $8 p_n(1 - p_n) {n \choose 2} \le 4 n^2 p_n$ and that $4n \epsilon/ (3m) \le n^2 p_n$ since $np_n \to \infty$. Noting that the same probability bound holds when Lemma \ref{lemma:lipschitz_concentration} is applied to $-| \mathcal{C}_G(k) |$, we arrive at the bound
\[
\mathbb{P} \left( | | \mathcal{C}_G(k) | - \E [ | \mathcal{C}_G(k)|] | \ge \frac{\epsilon n}{m} \right) \le 2 \exp \left( - \frac{\epsilon^2}{5m^2 p_n} \right).
\]
Next, taking a union bound over $k \in \{0, \ldots, m - 1\}$, we arrive at
\begin{equation}
\label{eq:color_set_concentration}
\mathbb{P} \left( \exists k \in \{0, \ldots, m - 1 \} : | | \mathcal{C}_G(k) | - \E [ | \mathcal{C}_G(k)|] | \ge \frac{\epsilon n}{m} \right) \le 2m \exp \left( - \frac{\epsilon^2}{5m^2 p_n} \right).
\end{equation}
Since $m$ is a fixed positive integer, taking $p_n = o ( 1 / \log (n) )$ shows that the bound in \eqref{eq:color_set_concentration} is $o(n^{-2})$, as claimed.

To prove \eqref{eq:balanced-colors}, observe that under $\mathcal{E}_2(u) \cap \mathcal{E}_2(v)$, we have $|S^c| = (1-o(1)) n$. Therefore, the first statement implies
\begin{equation}
  \mathbb{P}(\mathcal{E}_3(u,v)^c \cap \mathcal{E}_2) \leq \mathbb{P}(\mathcal{E}_3(u,v)^c \cap \mathcal{E}_2(u) \cap \mathcal{E}_2(v))
  = o(n^{-2}).
\end{equation}
\end{proof}

\begin{remark}
In a previous version of the paper~\cite{GRS22b}, Lemma~\ref{lemma:balanced-colors} was proved by applying Freedman's inequality~\cite{freedman} to an ``edge exposure martingale'' of the type utilized by Czajka and Pandurangan~\cite{Czajka2008}. Compared to the proof in~\cite{GRS22b}, the current one, which uses Lemma~\ref{lemma:lipschitz_concentration}, is simpler and cleaner. 
We thank Lutz Warnke for referring us to~\cite{warnke_2016} and pointing out how Lemma~\ref{lemma:lipschitz_concentration} applies here. 
\end{remark}

\begin{proof}[Proof of Lemma \ref{lemma:H-connectivity}]
Observe that under $\mathcal{E}_1$, any vertex $w \in S^c$ has at most two neighbors among $\cN(u)$ and at most two neighbors among $\cN(v)$. Therefore, $\mathcal{E}_1$ ensures the first requirement of $\mathcal{E}_4(u,v)$.

It remains to control the number of vertices $w \in S^c$ which have two or more neighbors among $\cN(u) \cup \cN(v)$. Condition on a realization of $S = S(u,v)$ satisfying $\mathcal{E}_2(u) \cap \mathcal{E}_2(v) \cap \{\deg(u) = \deg(v)\}$. In particular, $|\cN(u) \cup \cN(v)| \leq 2 e np_n$. For a vertex $w \in S^c$, let $X_w = |\{\cN(w) \cap (\cN(u) \cup \cN(v))\}|$ be the number of neighbors that $w$ has among $\cN(u) \cup \cN(v)$. Note that $X_w \sim \text{Bin}(|\cN(u) \cup \cN(v)|, p_n)$. Let $\mu = |\cN(u) \cup \cN(v)| \cdot p_n \leq 2 e n p_n^2$. By Corollary \ref{corollary:Chernoff},
\[\mathbb{P}(X_i \geq 2) \leq \left(\frac{e\mu}{2} \right)^2 \leq (e^2 np_n)^2.\]
Let $Y_i = \mathbbm{1}\{X_i \geq 2\}$ and $Y = \sum_{i \in [n] \setminus S} Y_i$. 
Let $\overline{Y} \sim \text{Bin}\left(n, \left(e^2 np_n \right)^2\right)$ 
and observe that $Y$ is stochastically dominated by $\overline{Y}$. Again using Corollary~\ref{corollary:Chernoff}, we see that
\begin{align*}
\mathbb{P}(Y \geq 6) &\leq \mathbb{P}(\overline{Y} \geq 6) \leq  \left(\frac{e \cdot n \left(e^2 np_n^2 \right)^2}{6} \right)^6.
\end{align*}

It follows that 
\begin{align*}
\mathbb{P}\left(\mathcal{E}_4(u,v)^c \cap \mathcal{E}_1 \cap \mathcal{E}_2  \right) &\leq  \left(\frac{e \cdot n \left(e^2 np_n^2 \right)^2}{6} \right)^6 = o(n^{-2}). 
\end{align*}
where we have used $p_n = o(n^{-5/6})$ in the last step.
\end{proof}

\subsection{Random graph isomorphism}
In order to prove Corollary~\ref{corollary:isomorphism}, we propose a natural algorithm for finding the isomorphism of a pair of isomorphic graphs $(G_1, G_2)$. If $p_n$ is such that the degree sequences are unique with high probability, then we compute signatures according to degree sequences. Otherwise, we use our depth-$3$ signatures.
\begin{enumerate}
    \item Assign signatures to the vertices in $G_1$ and $G_2$.
    \item Sort each of the $2n$ signatures in lexicographic order.
    \item Sort the vertices in $G_1$ according to the sorted order of the signatures. Similarly, sort the vertices in $G_2$.
    \item Match the vertices in $G_1$ and $G_2$ according to this sorted order.
\end{enumerate}

\begin{proof}[Proof of Corollary \ref{corollary:isomorphism}]
Clearly this procedure succeeds if the labels are unique, which happens with high probability by Theorem~\ref{theorem:unique-signatures}. 

To analyze the runtime, observe that Step 1 runs in time $O(n\Delta)$ for depth-$2$ signatures, given the adjacency list representation. For depth-$3$ signatures, Step 1 runs in time $O(n\Delta \log n)$ as discussed above. Since the depth-$2$ signatures are of length at most $\Delta$ and depth-$3$ signatures are of length at most $\Delta m$ where $m$ is a constant, the remaining steps can be analyzed identically for the two signatures.

Step 2 requires $O\left(n \Delta \log\Delta\right)$ time, since each signature contains at most $\Delta$ elements, and can therefore be sorted in $O\left( \Delta \log \Delta\right)$ time. Step 3 (on $G_1$) requires sorting $n$ elements, with a comparison cost of $O(\Delta)$. Therefore, Step 3 takes $O\left(n \Delta \log n \right)$ time. Finally, Step 4 takes $O(n)$ time. Since these are sequential steps, the overall runtime is~$O(n \Delta \log n)$. 
\end{proof}

\section{Isomorphic 2-neighborhoods: Proof of Theorem~\ref{thm:2nbrhood_impossibility}}\label{sec:2nbrhood}

In formalizing the argument given in Section \ref{sec:outline-2-neighborhoods}, we start with a lemma that shows that most $2$-neighborhoods are trees; this is relevant for subsequently bounding the size of $B$. 

\begin{lemma}
\label{lemma:tree_like}
Let $G \sim G(n,p_{n})$ and assume that $np_n \le \log^{2} (n)$. Then, with high probability, there are at most $\log^{15}(n)$ vertices in~$G$ whose $2$-neighborhood is not a tree. 
\end{lemma}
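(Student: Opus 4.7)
The plan is to bound the expected number of vertices whose $2$-neighborhood is not a tree and then apply Markov's inequality at the (very generous) threshold $\log^{15}(n)$.

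First I would establish a combinatorial characterization. Since $\mathcal{N}_2(v;G)$ is connected through $v$, it is a tree if and only if it contains no cycle; and by examining the possible placements of a hypothetical cycle's vertices across the three ``layers'' $\{v\}$, $\mathcal{N}(v)$, and $\mathcal{N}_2(v) \setminus (\mathcal{N}(v) \cup \{v\})$, one can verify that any cycle necessitates at least one of the following: (a) an edge between two neighbors of $v$ (triangle through $v$); (b) some vertex outside $\{v\}$ has at least two distinct neighbors in $\mathcal{N}(v)$ (giving a $4$-cycle through $v$, or a triangle just outside $v$); or (c) two vertices at distance exactly $2$ from $v$ are adjacent. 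A short inspection confirms that every cycle in $\mathcal{N}_2(v;G)$ must witness one of (a), (b), or (c).

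Next I would estimate the expected number of vertices $v$ witnessing each type by linearity of expectation, relying crucially on $n p_n \leq \log^2(n)$. For (a), each triangle of $G$ is witnessed by its three vertices, so the expected count is at most $3 \binom{n}{3} p_n^3 \leq (n p_n)^3 \leq \log^{6}(n)$. For (b), summing over ordered tuples $(v, u_1, u_2, w)$ with $u_1 \neq u_2$, $w \notin \{v, u_1, u_2\}$, and the four required edges, the expected count is at most $n^4 p_n^4 \leq \log^{8}(n)$. For (c), splitting into the sub-cases where the two distance-$2$ witnesses share a common neighbor in $\mathcal{N}(v)$ or not (requiring $4$ and $5$ edges respectively) yields at most $n^4 p_n^4 + n^5 p_n^5 \leq 2 \log^{10}(n)$. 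Summing, the expected number of $v \in V(G)$ with non-tree $2$-neighborhood is $O(\log^{10}(n))$.

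Finally, Markov's inequality gives that the probability of exceeding $\log^{15}(n)$ such vertices is $O(\log^{-5}(n)) = o(1)$, completing the proof. The only (mild) subtlety I anticipate is verifying completeness of the case analysis for (a)--(c); this is a purely graph-theoretic check, and the probabilistic content of the argument amounts to straightforward first-moment computations with plenty of room to spare between the $O(\log^{10}(n))$ mean bound and the $\log^{15}(n)$ target threshold.
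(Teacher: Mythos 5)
Your argument is correct, and it takes a genuinely different (and somewhat more elementary) route than the paper. The paper first shows that any non-tree $2$-neighborhood contains a cycle of length at most $5$, then counts \emph{global} short cycles and bounds how many $2$-neighborhoods can contain a given cycle via $\sum_{v\in C}|\mathcal{N}_2(v)|$; this last step needs the high-probability event $\max_i \deg(i)\le e np_n$, which is why the paper first reduces, by monotonicity, to the boundary case $np_n=\log^2(n)$ (so that the extreme-degree lemma applies), and why its Markov step is combined with a union bound against $\mathcal{E}_{\deg}^c$. You instead classify the possible ``offending'' configurations rooted at $v$ by layers (edge inside $\mathcal{N}(v)$; a vertex with two neighbors in $\mathcal{N}(v)$; an edge between two distance-$2$ vertices) and do a pure first-moment count of witnessing tuples, with no degree conditioning and no monotonicity reduction; your layer-by-layer case check is complete, since the only edge types in $\mathcal{N}_2(v)$ are $(0\text{--}1)$, $(1\text{--}1)$, $(1\text{--}2)$, $(2\text{--}2)$, and a cycle avoiding $(1\text{--}1)$ and $(2\text{--}2)$ edges must contain a layer-$2$ vertex whose two cycle edges both go to $\mathcal{N}(v)$. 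What each approach buys: yours is self-contained, works for all $p_n$ below the stated ceiling without auxiliary events, and yields the sharper $O(\log^{10} n)$ expectation; the paper's is more modular, reusing its generic ingredients (short-cycle reduction, extreme-degree bound, $|\mathcal{N}_2(v)|\lesssim (np_n)^2$) at the cost of a $\log^{14}(n)$ expectation bound and the extra conditioning. In writing yours up fully, just make the deterministic completeness check of cases (a)--(c) explicit as above; the probabilistic content is, as you say, routine.
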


\begin{proof}
Having at most $\log^{15}(n)$ vertices whose $2$-neighborhood is not a tree is a monotone decreasing graph property. 
Thus it suffices to prove the claim when $n p_{n} = \log^{2}(n)$, which we assume in the~following. 

Let $i \in [n]$. We start by claiming that if $\cN_2(i)$ is \emph{not} a tree, then it must contain a cycle of length at most~$5$. To see why, let $H_i$ be a tree constructed in the following manner: for all $u \in \cN_2(i)$ at distance 1 from~$i$, add the edge $(u,i)$ to the tree, and for all $v \in \cN_2(i)$ at distance 2 from $i$, choose an arbitrary neighbor $v'$ of $v$ such that $v'$ is at distance 1 from $i$ and add the edge $(v,v')$ to the tree. If $\cN_2(i)$ is not a tree, then there must exist an edge $(a,b)$ in $\cN_2(i)$ but not in $H_i$. However, since there is a path of length at most $2$ from $i$ to $a$ in $H_{i}$, and similarly a path of length at most $2$ from $i$ to $b$ in $H_{i}$, the addition of the edge $(a,b)$ creates a cycle of length at most $5$, proving the claim. 

In light of the claim, it suffices to bound the number of $2$-neighborhoods that contain a $3$-cycle, $4$-cycle, or $5$-cycle. To do so formally, we introduce a bit of notation. 
For $k \in \{3,4,5 \}$, let $\mathcal{O}_k$ be the (random) set of $k$-cycles in $G$. For a given labeled cycle $C \in \cO_k$, let $X_C$ be the number of (labeled) $2$-neighborhoods that contain $C$ as a subgraph. Define the event 
$
\cE_{\deg} : = \left \{ \max_{i \in [n]} \deg(i) \le e np_n \right \}
$
and note that $\cE_{\deg}$ holds with high probability (see Lemma~\ref{lemma:extreme-degrees}). 
We claim that 
$X_{C}$ is bounded from above by $100 \log^{4} (n)$ on the event~$\cE_{\deg}$. To see why, notice that for any cycle to be a subgraph of $\cN_2(i)$, it must be the case that $i$ is in the $2$-neighborhood of one (in fact, all) of the vertices in the cycle. Formally, we have that
\begin{equation}
\label{eq:X_C_bound}
X_{C} \le \left| \bigcup_{v \in C} \cN_2(v) \right| \le \sum_{v \in C} | \cN_2(v) |.
\end{equation}
On the event $\cE_{\deg}$, we can bound the size of any $2$-neighborhood from above by 
$1 + enp_n + (e np_n)^2 \le 20 (np_n)^2$. 
Together with \eqref{eq:X_C_bound}, this shows that 
$X_C \le 20 |C| (np_n)^2 \leq 100 \log^{4} (n)$, 
where we used that $|C| \in \{3,4,5\}$ and also that $n p_{n} \leq \log^{2}(n)$. 

Let $T$ be the set of vertices whose $2$-neighborhood is not a tree. 
By the discussion above we have that
\begin{align*}
\E [ |T| \mathbf{1}( \cE_{\deg} )] & \le \E \left[ \sum_{i \in [n]} \mathbf{1}( \cN_2(i) \text{ contains a cycle of length at most 5} ) \mathbf{1}( \cE_{\deg} ) \right] \\
& \le \E \left[ \sum_{i \in [n]} \sum_{k \in \{3,4,5 \}} \sum_{C \in \cO_k} \mathbf{1}( \cN_2(i) \text{ contains $C$} ) \mathbf{1}( \cE_{\deg} ) \right] \\
& = \E \left[ \sum_{k \in \{3,4,5\}} \sum_{C \in \cO_k} X_C \mathbf{1}(\cE_{\deg}) \right] 
\le 100 \log^{4}(n) \sum_{k \in \{3,4,5\}} \E[ | \cO_k | ], 
\end{align*}
where in the last line we first interchanged the order of summations and then used that 
$X_{C} \mathbf{1}(\cE_{\deg}) \leq 100 \log^{4}(n)$ 
from the previous paragraph. 
By a simple bound on the expected number of $k$-cycles, we have that 
\[
\sum_{k \in \{3,4,5\}} \E[ | \cO_k | ] 
\leq \sum_{k \in \{3,4,5\}} n^{k} p_{n}^{k} 
\leq 3 (n p_{n})^{5} 
\leq 3 \log^{10}(n).
\]
Putting together the previous two displays we thus have that 
$\E[ |T| \mathbf{1}(\cE_{\deg})] \leq 300 \log^{14}(n)$. 
To conclude, note that 
\[
\mathbb{P} \left( |T| \geq \log^{15}(n) \right) 
\leq \mathbb{P} \left( |T| \mathbf{1}(\cE_{\deg}) \geq \log^{15}(n) \right) + \mathbb{P} \left( \cE_{\deg}^{c} \right). 
\]
Here the first term on the right hand side is at most $300/\log(n)$ by Markov's inequality, while the second term is $o(1)$ by Lemma~\ref{lemma:extreme-degrees}. 
\end{proof}

With Lemma~\ref{lemma:tree_like} in hand, we now prove a linear lower bound on the number of good vertices. 

\begin{lemma}
\label{lemma:B_size} 
Assume that $\frac{1}{2} \log n \leq n p_{n} \leq \log^{2}(n)$. 
Then, with high probability, 
we have that $|B| \ge n/2$. 
\end{lemma}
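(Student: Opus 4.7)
The plan is to show that $|B^{c}| = o(n)$ with high probability, which immediately gives $|B| \ge n/2$ for $n$ large. Decompose $B^{c}$ into three pieces: the atypical vertices $A$; the ``extended'' set $A' := \{v : v \sim u \text{ for some } u \in A\}$ of vertices with a neighbor in $A$; and the set $T$ of vertices whose $2$-neighborhood is not a tree. By a union bound on the corresponding high-probability events, it suffices to control each piece separately.

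First, I would bound $\E[|A|]$. For any fixed $v$, $\deg(v) \sim \mathrm{Bin}(n-1, p_{n})$ with mean $\mu := (n-1)p_{n}$. Using the two-sided Chernoff bounds with deviation $t := 3\sqrt{np_{n}\log\log n}$, the assumption $np_{n} \geq \tfrac{1}{2}\log n$ ensures $t \ll \mu$ for large $n$, and standard algebra gives $t^{2}/(2\mu) \sim \tfrac{9}{2}\log\log n$ on the lower tail and $t^{2}/(2\mu + 2t/3) \gtrsim 3\log\log n$ on the upper tail. Hence $\Pr(v \in A) \le 2(\log n)^{-3}$, and summing yields $\E[|A|] \le 2n(\log n)^{-3}$.

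Next, I would control $|A \cup A'|$. By the upper-degree part of Lemma~\ref{lemma:extreme-degrees}, the event $\mathcal{E}_{\deg} := \{\Delta \le enp_{n}\}$ holds with high probability, and on this event $\Delta \le e\log^{2}(n)$ by the assumption $np_{n} \le \log^{2}(n)$. On $\mathcal{E}_{\deg}$ we also have $|A \cup A'| \le |A|(1 + \Delta) \le |A|(1 + e\log^{2} n)$, so
\[
\E\bigl[|A \cup A'|\,\mathbf{1}(\mathcal{E}_{\deg})\bigr] \le (1 + e\log^{2} n)\,\E[|A|] = O\!\left(\frac{n}{\log n}\right).
\]
Markov's inequality then gives $|A \cup A'| \le n/\sqrt{\log n}$ with probability $1 - O(1/\sqrt{\log n})$ (on $\mathcal{E}_{\deg}$).

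Finally, Lemma~\ref{lemma:tree_like} yields $|T| \le \log^{15}(n)$ with high probability. Combining the three high-probability bounds via a union bound,
\[
|B^{c}| \le |A \cup A'| + |T| \le \frac{n}{\sqrt{\log n}} + \log^{15}(n) = o(n)
\]
with high probability, so $|B| \ge n/2$ for large $n$. The argument is essentially routine given Lemmas~\ref{lemma:extreme-degrees} and~\ref{lemma:tree_like}; the only point requiring care is ensuring that the Chernoff tail $\Pr(v \in A)$ decays polylogarithmically fast enough to dominate the $O(\log^{2} n)$ factor coming from $\Delta$, and the deviation $t = 3\sqrt{np_{n}\log\log n}$ was chosen precisely to make this work with margin to spare.
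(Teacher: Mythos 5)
Your proof follows essentially the same route as the paper's: decompose $B^c$ into the atypical set $A$, the vertices with a neighbor in $A$, and the non-tree set $T$; bound $\mathbb{P}(v\in A)$ by a Chernoff/Bernstein tail (the paper gets $\exp(-4\log\log n)$, you get $2(\log n)^{-3}$, and either suffices against the $O(\log^2 n)$ maximum-degree factor on $\mathcal{E}_{\deg}$); control the neighbor set via the max-degree event, Markov's inequality, and Lemma~\ref{lemma:tree_like}; and conclude with a union bound. The only differences are cosmetic (you bound $|A\cup A'|$ jointly by $n/\sqrt{\log n}$ rather than each piece by $n/6$), so the argument is correct and matches the paper's.
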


\begin{proof}
Recall that for any $i \in [n]$, we have that $\deg(i) \sim \mathrm{Bin}(n-1,p_n)$. Bernstein's inequality for binomial random variables therefore implies that
\[
\mathbb{P} \left( i \in A \right) 
\le 2 \mathrm{exp} \left( - \frac{ (9/2) n p_{n} \log \log n }{(n-1)p_n + \sqrt{n p_{n} \log \log n }} \right) 
\le  \mathrm{exp} \left( - 4\log \log n \right),
\]
where the final inequality uses that, for $n$ sufficiently large, $(n-1)p_n$ is much larger than $\sqrt{np_n} \log \log n$ for $np_n \ge \frac{1}{2}\log n$. We can now bound the expected size of $A$ as 
\begin{equation}
\label{eq:A_size}
\E [ |A|] = \sum_{i \in [n]} \mathbb{P}(i \in A) \le n \cdot \mathrm{exp} \left( - 4 \log \log n  \right). 
\end{equation}
In particular, an application of Markov's inequality shows that 
\begin{equation}
\label{eq:A_probability}
\mathbb{P} ( |A| > n/6 ) \le 6 \cdot \mathrm{exp} \left( - 4 \log \log n  \right) = o(1). 
\end{equation}
Next, define $A'$ to be the set of vertices which have at least one neighbor in $A$. For the analysis of this set, it is again useful to define the event $\cE_{\deg} : = \{ \max_{i \in [n]} \deg(i) \le e np_n \}$; recall that $\cE_{\deg}$ holds with high probability by Lemma~\ref{lemma:extreme-degrees}. We have that
\begin{equation}
\label{eq:A'_size}
\E [ |A' | \mathbf{1}(\cE_{\deg} )] 
\le \E \left[ \sum_{i \in A} \deg(i)  \mathbf{1}(\cE_{\deg} ) \right ] 
\le e n p_{n} \E [ | A |] 
\le e n \log^{2} (n) \mathrm{exp} \left( - 4 \log \log n \right) = o(n), 
\end{equation}
where we used~\eqref{eq:A_size} and that $n p_{n} \leq \log^{2}(n)$. 
By a union bound we thus have that 
\begin{equation}
\label{eq:A'_probability}
\mathbb{P} ( |A'| > n/6 ) 
\leq 
\mathbb{P}( |A'| \mathbf{1}( \cE_{\deg}) > n/6) 
+ \mathbb{P} ( \cE_{\deg}^{c}) 
= o(1),
\end{equation}
where the final bound follows since the first term on the right hand side is $o(1)$ by~\eqref{eq:A'_size} and Markov's inequality,
and the second term is $o(1)$ by Lemma~\ref{lemma:extreme-degrees} . 

Finally, let $T$ be the set of vertices whose $2$-neighborhood is not a tree. Noting that $B^c \subseteq A \cup A' \cup T$, we may combine the estimates in \eqref{eq:A_probability}, \eqref{eq:A'_probability}, as well as Lemma~\ref{lemma:tree_like} to obtain that 
\[
\mathbb{P} ( |B^c| > n/2) \le \mathbb{P} ( |A| > n/6) + \mathbb{P} ( |A'| > n/6) + \mathbb{P} ( |T| > n/6) = o(1). 
\qedhere
\]
\end{proof}

Next, we prove a deterministic upper bound on the number of possible degree profiles of good vertices. 

\begin{lemma}\label{lemma:number_degree_profiles}
Assume that $n p_{n} = \omega \left( \log \log n \right)$. 
For all $n$ large enough, the number of possible degree profiles for a good vertex is at most 
$\exp \left( 4 \sqrt{n p_{n} \log \log n} \log \left( n p_{n} \right)  \right)$. 
\end{lemma}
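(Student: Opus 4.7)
The plan is to observe that, by definition of a good vertex $v$, both $v$ and all of its neighbors lie outside the atypical set $A$, so all the relevant degrees are confined to an integer interval of length at most $6\sqrt{np_n \log\log n}$. Hence a degree profile of a good vertex is completely described by two pieces of data: the value of $\deg(v)$, and a multiset of size $\deg(v)$ whose elements all come from a set of at most $k := \lfloor 6\sqrt{np_n \log\log n} \rfloor + 1$ integer values. This is a purely combinatorial counting problem; the tree condition in the definition of goodness plays no role in this lemma, only the degree concentration does.

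First I would bound the number of possible values of $\deg(v)$ crudely by $k$, and for each fixed value $D$ of $\deg(v)$, bound the number of multisets of size $D$ drawn from a $k$-element alphabet by $\binom{D+k-1}{k-1}$. Taking $D$ as large as possible, we have $D \le (n-1)p_n + 3\sqrt{np_n\log\log n} \le 2np_n$ for $n$ large (using $np_n = \omega(\log\log n)$), so the total count is at most $k \cdot \binom{D+k-1}{k-1}$ with $D+k-1 \le 3np_n$. Applying the standard bound $\binom{a}{b} \le (ea/b)^{b}$, and noting that $k-1 \ge \tfrac{1}{2}\cdot 6\sqrt{np_n\log\log n}$ for $n$ large, I would deduce
\[
\binom{D+k-1}{k-1} \le \left( \frac{e(D+k-1)}{k-1} \right)^{k-1} \le \left( \frac{3e \cdot np_n}{3\sqrt{np_n\log\log n}} \right)^{k-1} = \left( e \sqrt{\frac{np_n}{\log\log n}} \right)^{k-1}.
\]

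Taking logarithms gives
\[
\log\binom{D+k-1}{k-1} \le (k-1)\left(1 + \frac{1}{2}\log(np_n) - \frac{1}{2}\log\log\log n \right) \le 3\sqrt{np_n\log\log n}\, \log(np_n)(1+o(1)),
\]
where the $1+o(1)$ absorbs the constant $1$ using $\log(np_n)\to\infty$ and drops the $-\tfrac{1}{2}\log\log\log n$ term (which only helps). The extra multiplicative factor $k = O(\sqrt{np_n\log\log n})$ coming from the number of choices of $\deg(v)$ contributes only $O(\log(np_n \log\log n))$ to the logarithm, which is of smaller order. For $n$ large enough the total is therefore below $4\sqrt{np_n\log\log n}\,\log(np_n)$, yielding the stated bound. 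The only delicate point is tracking the constants: the factor of $6$ in the width of the degree interval is offset by the factor $\tfrac{1}{2}$ coming from the $\sqrt{np_n}$ inside the logarithm, producing a $3 < 4$ in the final exponent.
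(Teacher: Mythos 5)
Your proposal is correct and follows essentially the same route as the paper's proof: confine all relevant degrees to the interval of width $6\sqrt{np_n\log\log n}$ forced by the definition of $A$, count the profiles for a fixed degree as multisets via the stars-and-bars bound $\binom{D+k-1}{k-1}$, apply $\binom{a}{b}\le (ea/b)^{b}$, and absorb the number of choices of $\deg(v)$ as a lower-order multiplicative factor. Your constant bookkeeping (ending near $3$--$3.5$ in the exponent before the slack to $4$) matches the paper's, and your side remark that only the degree condition of goodness is used here, not the tree condition, is also accurate.
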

\begin{proof}
Enumerate all integers in the interval 
$((n-1)p_n - 3\sqrt{n p_{n} \log \log n} , (n-1)p_n + 3\sqrt{n p_{n} \log \log n} )$ 
in increasing order as 
$d_{1} < d_{2} < \ldots < d_{k}$, 
and note that $k$ satisfies 
$6\sqrt{n p_{n} \log \log n}  - 1 \leq k \leq 6\sqrt{n p_{n} \log \log n}  + 1$. 

Let $v$ be a good vertex and let $d$ denote its degree. 
Since $v$ is good, we have that $v \notin A$, and so we must have that $d \in \{ d_{1}, d_{2}, \ldots, d_{k} \}$; 
in particular, 
$(n-1)p_n - 3\sqrt{n p_{n} \log \log n}  
\leq d 
\leq (n-1)p_n + 3\sqrt{n p_{n} \log \log n} $. 
Furthermore, since $v$ is good, all its neighbors are also not in $A$, 
and thus their degrees are also in the set $\{ d_{1}, d_{2}, \ldots, d_{k} \}$. 
For every $\ell \in [k]$, let $x_{\ell}$ denote the number of vertices of degree $d_{\ell}$ in the degree profile of~$v$. 
Then the collection of nonnegative integers $\{ x_{1}, x_{2}, \ldots, x_{k} \}$ must solve the equation 
$x_{1} + \ldots + x_{k} = d$. 
Through a stars-and-bars argument, the number of solutions to this equation is exactly~$\binom{d+k-1}{k-1}$. 

Next, we bound the asymptotic behavior of this quantity as $n \to \infty$. We have that 
\[
\log \binom{d+k-1}{k-1} 
\leq (k-1) \log \left( \frac{e(d+k-1)}{k-1} \right) 
\leq k \log \left( \frac{4d}{k} \right), 
\]
where the first inequality uses 
$\binom{a}{b} \leq (ea/b)^{b}$ for $a \geq b$, 
and the second inequality uses the bounds on $k$ and~$d$ (see above), 
as well as the assumption 
$n p_{n} = \omega \left( \log \log n \right)$. 
Using these same bounds we also have that~$4d/k \leq \sqrt{n p_{n}}$. 
Plugging this into the display above and using the upper bound on $k$, we obtain that 
\[
\log \binom{d+k-1}{k-1} 
\leq \left( 6 \sqrt{n p_{n} \log \log n}  + 1 \right) \log \left( \sqrt{n p_{n}} \right) 
\leq 3.1 \sqrt{n p_{n} \log \log n} \log \left( n p_{n} \right), 
\]
where the second inequality holds for all $n$ large enough. 

In summary, the number of possible degree profiles is the number of ways to choose $d$ and $\{x_{1}, \ldots, x_{k} \}$. 
The bound $d \leq (n p_{n} + 3\sqrt{n p_{n} \log \log n} )$ 
follows from the definition of a good vertex, 
and above we showed that the number of ways to choose $\{x_{1}, \ldots, x_{k} \}$ is at most 
$\exp \left( 3.1 \sqrt{n p_{n} \log \log n} \log \left( n p_{n} \right)  \right)$. 
Finally, we have that 
$(n p_{n} + 3 \sqrt{n p_{n} \log \log n} ) \exp \left( 3.1 \sqrt{n p_{n} \log \log n} \log \left( n p_{n} \right)  \right)
\leq \exp \left( 4 \sqrt{n p_{n} \log \log n} \log \left( n p_{n} \right) \right)$. 
\end{proof}

We conclude the proof by the pigeonhole principle; 
Theorem~\ref{thm:2nbrhood_impossibility} now follows directly from Lemma~\ref{lemma:pigeonhole}. 
\begin{lemma}\label{lemma:pigeonhole}
Assume that 
$\frac{1}{2} \log n \leq n p_{n} \leq c \log^{2}(n) / \left( \log \log n \right)^{3}$ for some $c \in (0,1/64)$. 
Then, with high probability, there exist at least 
$\frac{1}{2} n^{1-8\sqrt{c}}$ 
good vertices with the same degree profile. 
\end{lemma}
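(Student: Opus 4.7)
The plan is to combine the two preceding lemmas via the pigeonhole principle, which is essentially immediate once the preparatory work is done. First, I would verify that the hypotheses of Lemma~\ref{lemma:B_size} and Lemma~\ref{lemma:number_degree_profiles} are both satisfied. The hypothesis $\tfrac{1}{2}\log n \leq np_n \leq c\log^{2}(n)/(\log\log n)^{3}$ falls in the regime of Lemma~\ref{lemma:B_size}, and since $np_n \geq \tfrac{1}{2}\log n = \omega(\log\log n)$, Lemma~\ref{lemma:number_degree_profiles} applies as well. In particular, with high probability $|B| \geq n/2$.

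The key calculation is to simplify the upper bound from Lemma~\ref{lemma:number_degree_profiles} under the stated hypothesis. Using $np_n \leq c\log^{2}(n)/(\log\log n)^{3}$ and $np_n \leq \log^{2}(n)$, I would bound
\[
\sqrt{np_n \log\log n} \;\leq\; \sqrt{c}\,\frac{\log n}{\log\log n}, \qquad \log(np_n) \;\leq\; 2\log\log n.
\]
Multiplying these, the $\log\log n$ factors cancel neatly, giving
\[
4\sqrt{np_n \log\log n}\,\log(np_n) \;\leq\; 8\sqrt{c}\,\log n,
\]
so the number of possible degree profiles for a good vertex is at most $n^{8\sqrt{c}}$. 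Note that $c < 1/64$ gives $8\sqrt{c} < 1$, so this exponent is strictly less than $1$.

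Finally, I would apply pigeonhole: on the event $\{|B| \geq n/2\}$, partition the good vertices by their degree profile. Since there are at most $n^{8\sqrt{c}}$ profiles and at least $n/2$ good vertices, some profile is shared by at least
\[
\frac{n/2}{n^{8\sqrt{c}}} \;=\; \tfrac{1}{2}\,n^{1-8\sqrt{c}}
\]
good vertices, as desired. The argument is clean combinatorics on top of the two lemmas; there is no real obstacle beyond ensuring the asymptotic inequalities are stated correctly, and in particular verifying the cancellation of the $\log\log n$ factors that produces the clean exponent $8\sqrt{c}$.
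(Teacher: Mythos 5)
Your proposal is correct and follows essentially the same route as the paper: invoke Lemma~\ref{lemma:B_size} for $|B| \geq n/2$, simplify the bound of Lemma~\ref{lemma:number_degree_profiles} to $n^{8\sqrt{c}}$ using $np_n \leq c\log^2(n)/(\log\log n)^3$, and apply pigeonhole. Your explicit verification of the cancellation of the $\log\log n$ factors matches the inequality the paper uses implicitly.
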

\begin{proof}
Let $\cE'$ denote the event that there are at least $n/2$ good vertices; by Lemma~\ref{lemma:B_size}, this event holds with high probability. 
By Lemma~\ref{lemma:number_degree_profiles}, the number of possible degree profiles for a good vertex is at most 
$\exp \left( 4 \sqrt{n p_{n} \log \log n} \log \left( n p_{n} \right)  \right) 
\leq \exp \left(8 \sqrt{c} \log n \right)
= n^{8\sqrt{c}}$, 
where the inequality uses the assumption that 
$n p_{n} \leq c \log^{2}(n) / \left( \log \log n \right)^{3}$. 
Thus, on the event $\cE'$, by the pigeonhole principle there must exist some degree profile with at least $(n/2)/n^{8\sqrt{c}} = \frac{1}{2} n^{1-8\sqrt{c}}$ good vertices having this degree profile. 
\end{proof}

\section{Smoothed analysis of graph isomorphism: Proof of Theorem \ref{theorem:smoothed-analysis}}\label{sec:smoothed-analysis}
\begin{lemma}\label{lemma:balanced-colors-semirandom}
Fix $\epsilon > 0$. Let $G_1 \in \mathcal{G}_n(\lambda)$ for some $\lambda < 1$ and let $G_2 \sim G(n,p_n)$, where $p_n = \omega (\log^2(n) / n)$ and $p_n = o ( 1/ \log^3 (n) )$.
Let $G$ be the union of $G_1$ and $G_2$, and let $m = O(\log n)$. Then
\[
\mathbb{P} \left( \forall k \in \{ 0, \ldots, m - 1 \}, (1 - \epsilon) \frac{n}{m} \le | \mathcal{C}_G(m,k) | \le (1 + \epsilon) \frac{n}{m} \right) = 1 - o(n^{-3}).
\]
Moreover, the same result holds if $G$ is the \xor of $G_1$ and $G_2$.
\end{lemma}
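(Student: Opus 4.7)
The plan is to follow the two-step template of Lemma~\ref{lemma:balanced-colors}: first show that $\mathbb{E}[|\mathcal{C}_G(m,k)|] = (1 \pm o(1)) n/m$ for each $k$, then apply the Lipschitz concentration inequality (Lemma~\ref{lemma:lipschitz_concentration}) to each color class and union-bound over $k \in \{0, \ldots, m-1\}$. Both the union and XOR settings will be handled in parallel, since the difference between them affects only the distribution of $\deg_G(v)$, not the Lipschitz structure in the random edge variables of $G_2$.

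For the expectation, fix $v \in [n]$ and set $d_1 := \deg_{G_1}(v)$ and $n_1 := n-1-d_1$. Since $G_1 \in \mathcal{G}_n(\lambda)$ forces $d_1 \le n^{\lambda}$, we have $n_1 = n(1 - o(1))$. In the union model, $\deg_G(v) = d_1 + X_v$ where $X_v \sim \mathrm{Bin}(n_1, p_n)$; in the XOR model, $\deg_G(v) = X_v' + X_v$, where $X_v' \sim \mathrm{Bin}(d_1, 1-p_n)$ is independent of $X_v$. Following the proof of Lemma~\ref{lemma:balanced-colors-expectation}, I would apply Lemma~\ref{lemma:probability-ratio} (with step sizes that are multiples of $m$) to show that the law of $X_v \bmod m$ is uniform on $\{0,\ldots,m-1\}$ up to a factor of $1 \pm o(1)$; adding a deterministic shift (union case) or an independent random variable (XOR case) preserves near-uniformity. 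Summing over $v$ then yields $\mathbb{E}[|\mathcal{C}_G(m,k)|] = (1 \pm o(1)) n/m$ uniformly in $k$.

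For the concentration, view $|\mathcal{C}_G(m,k)|$ as a function of the $N = \binom{n}{2}$ independent $\mathrm{Bern}(p_n)$ edge variables defining $G_2$. In either model, flipping one such variable alters at most two vertex degrees by $\pm 1$, so $|\mathcal{C}_G(m,k)|$ changes by at most $C = 2$. Applying Lemma~\ref{lemma:lipschitz_concentration} with $t = \epsilon n /(2m)$ yields a tail bound of the form $\exp(-\Theta(1/(m^2 p_n)))$, using that the variance term $p_n(1-p_n)\binom{n}{2}$ dominates $Ct/3$ under our hypotheses. With $m = O(\log n)$ and $p_n = o(1/\log^3 n)$ we have $1/(m^2 p_n) = \omega(\log n)$, so this is $o(n^{-c})$ for every fixed $c$. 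Taking $c = 4$ and union-bounding over the $m = O(\log n)$ color classes, combined with the expectation estimate, yields the claimed $o(n^{-3})$ probability bound.

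The main obstacle I expect is controlling the uniformity of $X_v \bmod m$ when $m$ is allowed to grow like $\log n$, whereas Lemma~\ref{lemma:balanced-colors-expectation} only handled constant $m$. The saving grace is that Lemma~\ref{lemma:probability-ratio} only requires step sizes of size $o(\sqrt{n_1 p_n})$, and our hypothesis $n p_n = \omega(\log^2 n)$ yields $\sqrt{n_1 p_n} = \omega(\log n)$, which is exactly the margin needed against $m = O(\log n)$. The per-vertex deterministic shift $d_1$ makes the distribution of the color of $v$ vertex-dependent, but uniformity of $X_v \bmod m$ is insensitive to such shifts, so aggregating over $v$ still produces a balanced expected class size. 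The XOR case requires one extra observation: convolving the (approximately uniform) law of $X_v \bmod m$ with the independent distribution of $X_v' \bmod m$ preserves approximate uniformity, which is immediate from the conditioning identity $\mathbb{P}(X_v' + X_v \equiv k \mid X_v' = j) = \mathbb{P}(X_v \equiv k - j)$.
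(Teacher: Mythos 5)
Your proposal is correct and, in outline, it is the paper's own argument: an expectation estimate $\mathbb{E}[|\mathcal{C}_G(m,k)|] = (1\pm o(1))n/m$ obtained by restricting the degree to a typical window and invoking Lemma~\ref{lemma:probability-ratio} (the paper isolates this as Lemma~\ref{lemma:balanced-colors-expectation-semirandom}, whose hypothesis $m = o(\sqrt{np_n})$ is exactly the margin you extract from $np_n = \omega(\log^2 n)$), followed by Lemma~\ref{lemma:lipschitz_concentration} applied to the edge indicators of $G_2$ with Lipschitz constant $2$ and a union bound over the $m = O(\log n)$ classes, giving a tail of size $n^{-\omega(1)}$ and hence the claimed $o(n^{-3})$. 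Two comparative remarks. First, you cannot follow Lemma~\ref{lemma:balanced-colors-expectation} verbatim in the expectation step: with $m$ growing, the probability of the degree leaving the typical window must be $o(1/m)$, not merely $o(1)$, and the half-width $(np_n)^{2/3}$ used there only yields a Chebyshev tail of order $(np_n)^{-1/3}$, which need not be $o(1/\log n)$ when $np_n$ is barely $\omega(\log^2 n)$; the paper widens the window to $(np_n)^{3/4}$, giving tail $(np_n)^{-1/2} = o(1/m)$ (a Chernoff bound would also do), so make this adjustment explicit. Second, your handling of the $\mathrm{XOR}$ model genuinely differs from the paper's and is arguably cleaner: writing $\deg_{G'}(v)$ as the independent sum $\mathrm{Bin}(d_1, 1-p_n) + \mathrm{Bin}(n-1-d_1, p_n)$ and noting that convolving an approximately uniform distribution mod $m$ with any independent variable preserves approximate uniformity sidesteps the paper's coupling argument, which bounds the probability that some $G_1$-edge at $v$ is deleted by $d_v p_n$ and consequently needs the extra hypothesis $n^{(2\lambda+1)/3}p_n \to 0$ appearing in Lemma~\ref{lemma:balanced-colors-expectation-semirandom}; your convolution argument works under the hypotheses of the lemma exactly as stated, for all $\lambda < 1$.
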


The proof of Theorem \ref{theorem:smoothed-analysis} requires that for every $u \neq v$, there be a ``witness'' vertex $w \in \cN(u) \setminus ( \cN(v) \cup \{v \})$. The following lemma ensures that such a witness vertex exists.
\begin{lemma}
\label{lemma:neighborhood_difference}
For $\lambda \in (0,1)$, let $G$ be the union of $G_1 \in \cG(\lambda)$ and $G_2 \sim G(n,p_n)$, where $p_n \to 0$ and $p_n = \omega(\log(n) / n)$.
Then for any $u,v \in [n]$, it holds that $\cN(u) \setminus ( \cN(v) \cup \{v \}) \neq \emptyset$ with probability $1 - o(n^{-2})$.

Furthermore, the same result holds if $G$ is the \xor of $G_1$ and $G_2$.
\end{lemma}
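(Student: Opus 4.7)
The plan is to identify a large set of candidate witness vertices $w$ for which the edges $(u,w)$ and $(v,w)$ in the composite graph $G$ are determined entirely by the random graph $G_2$, and then to show that at least one such $w$ satisfies $w \sim_G u$ and $w \nsim_G v$ with the required probability. Since both the union and \xor operations agree with $G_2$ on edges that are absent in $G_1$, this reduction makes the two cases interchangeable.

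First, I would define
\begin{equation*}
T := \{w \in [n] \setminus \{u,v\} : w \notin \cN_{G_1}(u) \cup \cN_{G_1}(v)\}.
\end{equation*}
The hypothesis $G_1 \in \cG_n(\lambda)$ gives $|\cN_{G_1}(u)|, |\cN_{G_1}(v)| \leq n^{\lambda}$ (each $1$-neighborhood is a subset of the corresponding $2$-neighborhood), so $|T| \geq n - 2n^{\lambda} - 2 = n(1-o(1))$. For every $w \in T$, the edges $(u,w)$ and $(v,w)$ are absent in $G_1$, so in both the union and \xor models we have the equivalences $w \sim_G u \iff w \sim_{G_2} u$ and $w \sim_G v \iff w \sim_{G_2} v$.

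Next, call $w \in T$ a \emph{witness} if $w \sim_{G_2} u$ and $w \nsim_{G_2} v$. The events $\{w \text{ is a witness}\}$ for distinct $w \in T$ depend on disjoint pairs of edges in $G_2$, so they are mutually independent, each occurring with probability $p_n(1-p_n)$. Hence
\begin{equation*}
\mathbb{P}\bigl(\cN(u) \setminus (\cN(v) \cup \{v\}) = \emptyset\bigr) \leq \bigl(1-p_n(1-p_n)\bigr)^{|T|} \leq \exp\bigl(-p_n(1-p_n)|T|\bigr).
\end{equation*}
Using $p_n \to 0$, $|T| = n(1-o(1))$, and $n p_n = \omega(\log n)$, the exponent is $-\omega(\log n)$, and the probability is $o(n^{-2})$, as required.

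I do not anticipate a serious obstacle here; the only subtlety is the uniform treatment of the union and \xor models, which is resolved by restricting to $T$ so that all relevant $G_1$-edges vanish. The choice to remove both $\cN_{G_1}(u)$ and $\cN_{G_1}(v)$ (rather than just one) is what makes the $(u,w)$ and $(v,w)$ edge events in $G_2$ unconstrained, and the assumption $G_1 \in \cG_n(\lambda)$ with $\lambda < 1$ is precisely what guarantees $|T|$ is of order $n$ so that the exponential bound is sharp enough.
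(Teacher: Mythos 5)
Your proposal is correct and follows essentially the same route as the paper: the paper's proof defines $S := \{u,v\} \cup \cN_{G_1}(u) \cup \cN_{G_1}(v)$ (your $T$ is just $[n]\setminus S$), uses $G_1 \in \cG_n(\lambda)$ to bound $|S| = O(n^\lambda)$, and then applies the same independent-edges argument in $G_2$ to get $(1-p_n(1-p_n))^{n-|S|} \le \exp(-(1-o(1))np_n) = o(n^{-2})$. Your explicit observation that restricting to vertices with no $G_1$-edge to $u$ or $v$ makes the union and \xor cases identical is exactly the reduction the paper uses, just stated a bit more carefully.
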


\begin{proof}
Define $S : = \{ u, v \} \cup \cN_{G_1}(u) \cup \cN_{G_1}(v)$, and notice that $|S| = O( n^\lambda )$ since $G_1 \in \cG(\lambda)$.
Furthermore, for any $w \notin S$, it holds 
with probability $p_n (1 - p_n)$ that $w \in \cN_{G_2}(u) \setminus (\cN_{G_2}(v) \cup S)$.
As this probability is independent over all $w \notin S$, we have that
\begin{align*}
\mathbb{P} \left( \cN_{G_2}(u) \setminus (\cN_{G_2}(v) \cup S)= \emptyset \right) & = \left( 1 - p_n (1 - p_n) \right)^{n - |S|} \\
& \stackrel{(a)}{\le} \exp \left( - (n - |S|) p_n (1 - p_n) \right) \\
& \stackrel{(b)}{=} \exp \left( - (1 - o(1)) np_n \right) \\
& \stackrel{(c)}{=} o(n^{-2}).
\end{align*}
Above, $(a)$ uses the inequality $1 + x \le e^x$ for any $x \in \mathbb{R}$;
$(b)$ follows since $n - |S| \sim n$ and $p_n(1 - p_n) \sim p_n$ as $n \to \infty$;
$(c)$ uses our assumption that $p_n = \omega (\log (n) / n)$.

Finally, we note that if $\cN_{G_2}(u) \setminus ( \cN_{G_2}(v) \cup S) \neq \emptyset$, and $G$ is the union of $G_1$ and $G_2$, then $\cN(u) \setminus ( \cN(v) \cup \{v\}) \neq \emptyset$.
The same result holds when $G$ is the \xor of $G_1$ and $G_2$.
\end{proof}

We next define some useful events for our analysis. 
The first event provides an upper bound for vertex degrees in $G_2$:
\[
\mathcal{F}_1 : = \left \{ \forall z \in [n], \deg_{G_2}(z) \le e np_n \right \}.
\]
Since $np_n \ge 4 \log n$, Lemma \ref{lemma:extreme-degrees} implies that $\mathbb{P}( \mathcal{F}_1 ) = 1 - o(n^{-2})$.

For fixed vertices $u,v$, we also define the event 
\[
\mathcal{F}_2(u,v) : = \{ \cN(u) \setminus ( \cN(v) \cup \{v \} ) \neq \emptyset \}.
\]
By Lemma \ref{lemma:neighborhood_difference}, we have that $\mathbb{P}( \mathcal{F}_2) = 1 - o(n^{-2})$.

Finally, we introduce an event concerning the size of color classes in $G$. 
For a fixed $\epsilon > 0$, define the event
\[
\mathcal{F}_3 : = \left \{ \forall k \in \{0, \ldots, m - 1\},  | \mathcal{C}_{G} (m,k) |  \ge (1 - \epsilon) \frac{n}{m} \right \}.
\]
In light of Lemma \ref{lemma:balanced-colors-semirandom}, $\mathbb{P}( \mathcal{F}_3) = 1-o(n^{-2})$.
Together, we have that $\mathbb{P} ( \mathcal{F}_1 \cap \mathcal{F}_2(u,v) \cap \mathcal{F}_3) = 1 - o(n^{-2})$.

Finally, we provide a probabilistic lower bound on the cardinality of the set $R$, as described by in Section~\ref{sec:smoothed-analysis-outline}.

\begin{lemma}\label{lemma:size-R}
Fix $u \neq v$. On the event $\mathcal{F}_1 \cap \mathcal{F}_2(u,v)$, it holds that $|R(u,v)| \geq n - n^{\lambda + o(1)}$.
\end{lemma}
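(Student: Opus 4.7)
The plan is to show that $|R(u,v)^c| \le n^{\lambda + o(1)}$ by decomposing the complement according to which of the three defining conditions of $R(u,v)$ fails. Let $E_i$ denote the set of $w' \in [n]$ violating Condition~\ref{item:R_neighborhood}, \ref{item:R_2_neighborhood}, or \ref{item:R_G1_edges} respectively, so that $R^c = E_1 \cup E_2 \cup E_3$ and it suffices to bound each piece.

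The essential ingredient is a uniform upper bound on $1$-neighborhood sizes in $G$: for every $z \in [n]$, we have $|\mathcal{N}(z)| \le |\mathcal{N}_{G_1}(z)| + |\mathcal{N}_{G_2}(z)| \le n^{\lambda} + e n p_n$, where the first bound comes from $G_1 \in \mathcal{G}_n(\lambda)$ (which gives $|\mathcal{N}_{G_1}(z)| \le |\mathcal{N}_{2}(z;G_1)| \le n^{\lambda}$) and the second from the event $\mathcal{F}_1$. This bound is uniform over all $z$, which is what makes it possible to control $E_2$.

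With this uniform bound in hand, the three pieces are handled as follows. The Condition~\ref{item:R_neighborhood} contribution satisfies $|E_1| = |\{u,v\} \cup \mathcal{N}(u) \cup \mathcal{N}(v)| \le 2 + 2(n^\lambda + e n p_n)$ directly from the definition. The Condition~\ref{item:R_2_neighborhood} contribution is contained in $\bigcup_{z \in \mathcal{N}(u) \cup \mathcal{N}(v) \setminus \{w\}} \mathcal{N}(z)$, so $|E_2| \le |\mathcal{N}(u) \cup \mathcal{N}(v)| \cdot \max_{z} |\mathcal{N}(z)| \le (2n^\lambda + 2 e n p_n)(n^\lambda + e n p_n)$, where both factors are controlled by the uniform bound above. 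Finally, the Condition~\ref{item:R_G1_edges} contribution is exactly $|E_3| \le |\mathcal{N}_{G_1}(w)| \le n^\lambda$. Summing the three bounds and invoking the assumption $n p_n = o(n / \log^3(n))$ to absorb the mixed and purely random terms yields $|R^c| \le n^{\lambda + o(1)}$, which is equivalent to the claimed lower bound on $|R(u,v)|$.

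The argument extends immediately to the \xor model (Part~2 of Theorem~\ref{theorem:smoothed-analysis}), since the inequality $|\mathcal{N}_{G'}(z)| \le |\mathcal{N}_{G_1}(z)| + |\mathcal{N}_{G_2}(z)|$ continues to hold when $G'$ is the \xor of $G_1$ and $G_2$; the rest of the decomposition is unchanged. The main obstacle in executing this plan is the bound on $|E_2|$: both factors in the product $|\mathcal{N}(u) \cup \mathcal{N}(v)| \cdot \max_z |\mathcal{N}(z)|$ must simultaneously be controlled by the $G_1$-structural bound and the $G_2$-degree bound, and the resulting cross-terms must be shown to be negligible compared to $n^{\lambda + o(1)}$ under the parameter regime $np_n = o(n/\log^3(n))$. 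The other two pieces $E_1$ and $E_3$ follow directly from the definitions once the uniform neighborhood bound is established.
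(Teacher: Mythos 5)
Your decomposition of $R(u,v)^c$ by which of the three conditions fails, and your bounds on $E_1$ and $E_3$, match the paper's proof. The genuine gap is in the bound on $E_2$. The crude product bound $|E_2| \le |\mathcal{N}(u)\cup\mathcal{N}(v)|\cdot\max_z|\mathcal{N}(z)| \le (2n^{\lambda}+2enp_n)(n^{\lambda}+enp_n)$ contains the term $2n^{2\lambda}$, coming from $G_1$-neighbors of $G_1$-neighbors. For a fixed constant $\lambda\in(0,1)$ this is not $n^{\lambda+o(1)}$, so the stated conclusion does not follow; worse, for $\lambda\ge 1/2$ the term is at least $n$ and the bound is vacuous, which also destroys the downstream application in the proof of Theorem~\ref{theorem:smoothed-analysis} (there one needs $|R^c|$ to be negligible compared with $n/m$, $m=\log n$). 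The point you are missing is that membership in $\mathcal{G}_n(\lambda)$ is a constraint on \emph{$2$-neighborhoods} of $G_1$, not merely on degrees, and it must be used at that level: every vertex reachable from $u$ by two $G_1$-edges lies in $\mathcal{N}_2(u;G_1)$, hence the $G_1$--$G_1$ contribution to $E_2$ is at most $n^{\lambda}$ in total, not $n^{\lambda}\cdot n^{\lambda}$; only the mixed paths ($G_1$-edge followed by a $G_2$-edge, or a first step along a $G_2$-edge) are bounded by products involving $enp_n$. This is exactly how the paper arrives at $2\bigl(n^{\lambda}+n^{\lambda}enp_n+enp_n(enp_n+n^{\lambda})\bigr)$ with no $n^{2\lambda}$ term. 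You note the inequality $|\mathcal{N}_{G_1}(z)|\le|\mathcal{N}_2(z;G_1)|\le n^{\lambda}$ but then retain only the degree information, which is strictly weaker and cannot recover the claimed bound.

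A secondary issue: your final step asserts that $np_n=o(n/\log^3 n)$ suffices to absorb the mixed term $n^{\lambda}enp_n$ and the term $e^2(np_n)^2$ into $n^{\lambda+o(1)}$. It does not (take $np_n$ of order $n^{0.9}$, say); the paper's proof performs this simplification using $np_n=n^{o(1)}$. So even after repairing the $E_2$ bound, the hypothesis you invoke for the absorption is not the one that makes the arithmetic close.
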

\begin{proof}
Assume that that the event $\mathcal{F}_1 \cap \mathcal{F}_2(u,v)$ holds, so that $\mathcal{N}(u) \setminus (\mathcal{N}(v) \cup \{v\})$ is nonempty and the maximum degree in $G_2$ is $e np_n$. We lower-bound $|R|$, by upper-bounding the number of pairs $(w,w')$ failing to satisfy the properties of $R$. 
In particular, observe that:
\begin{enumerate}

    \item If $G_1 \in \mathcal{G}(\lambda)$, then $| \mathcal{N}(u) | \le n^\lambda + e np_n$; the same bound holds for $| \mathcal{N}(v) |$. As a result, there are at most $2+ 2n^{\lambda} + 2enp_n$ vertices which fail to satisfy Property \ref{item:R_neighborhood}.
    
    \item By accounting for the contributions from $G_1$ and $G_2$, the neighborhood of $\mathcal{N}(u) \cup \mathcal{N}(v) \setminus \{w\}$ (which are precisely the vertices that fail to satisfy Property \ref{item:R_2_neighborhood}) is of size at most
\[2\left(n^{\lambda} + n^{\lambda} enp_n + enp_n (enp_n + n^{\lambda}) \right).\]
Here, the factor of $2$ is due to separating the contributions of $u$ and $v$. Focusing on the contributions of $u$, the term $n^{\lambda}$ comes from the distance-$2$ neighbors of $u$ in $G_1$, since $G_1 \in \mathcal{G}(\lambda)$. The term $n^{\lambda} enp_n$ comes from the random neighbors of the neighbors of $u$ in $G_1$. Finally, the term $enp_n (enp_n + n^{\lambda})$ comes from the deterministic and random neighbors of the neighbors of $u$ in $G_2$.

\item  For $G_1 \in \mathcal{G}(\lambda)$, $w$ can have at most $n^{\lambda}$ neighbors in $G_1$, hence there are at most $n^\lambda$ vertices which fail to satisfy Property \ref{item:R_G1_edges}. 
\end{enumerate}
Putting everything together, if the event $\mathcal{F}_1 \cap \mathcal{F}_2(u,v)$ holds, then
\[|R^c| \leq 2+ 2n^{\lambda} + 2enp_n  + 2\left(n^{\lambda} + n^{\lambda} enp_n + enp_n (enp_n + n^{\lambda})\right) + n^{\lambda} = n^{\lambda + o(1)}, 
\]
for $n$ large enough. 
In the final expression, we have used that $np_n = n^{o(1)}$.
Therefore, $|R| \geq n - n^{\lambda + o(1)}$ on $\mathcal{F}_1 \cap \mathcal{F}_2(u,v)$.
\end{proof}

\begin{proof}[Proof of Theorem \ref{theorem:smoothed-analysis}, Part 1] 
Fix vertices $u,v \in [n]$, and let $w \in \mathcal{N}(u) \setminus ( \mathcal{N}(v) \cup \{v \} )$.
Our strategy is to show that for any $x \in \mathcal{N}(v)$, it holds with high probability that $\ell_G(x) \neq \ell_G(w)$. This in turn implies that $\text{sig}(u) \neq \text{sig}(v)$. We will then take a union bound over pairs of vertices $u,v$ to prove the desired result.

To formalize this strategy, take $w \in \mathcal{N}(u) \setminus ( \mathcal{N}(v) \cup \{v \})$, which is possible on the event $\mathcal{F}_2(u,v)$.
Let us also define $H$ to be the graph consisting of all edges in $G$ revealed by $\mathcal{I}$; that is, $H$ contains all edges in $G$ except for the edges $\{ (w, w') : w' \in R(u,v) \}$. A useful observation is that the presence or absence of these edges affect only the colors of vertices in $\{w \} \cup (\mathcal{N}(w) \cap R)$.
As a result, it holds for all $k \in \{0, \ldots, m - 1\}$ that $| |\mathcal{C}_G(m,k)| - | \mathcal{C}_H(m,k) | | \le | \{ w \} \cup \mathcal{N}(w) |$.
With this inequality in hand, we have the following lower bound for the size of color classes in $H$ for each $k \in \{0, \ldots, m - 1 \}$ on the event $\mathcal{F}_1 \cap \mathcal{F}_2(u,v) \cap \mathcal{F}_3$:
\begin{align*}
| \mathcal{C}_H(m,k) \cap R| & \ge | \mathcal{C}_H(m,k) | - |R^c| \\
& \ge | \mathcal{C}_G(m,k) | - | \{ w \} \cup \mathcal{N}(w) | - |R^c| \\
& \stackrel{(a)}{\ge} ( 1 - \epsilon) \frac{n}{m} - (1 + n^\lambda + enp_n) - n^{\lambda + o(1)} \\
& \stackrel{(b)}{\ge} (1 - 2 \epsilon) \frac{n}{m}.
\end{align*}
Above, $(a)$ follows from the lower bound on $| \mathcal{C}_G(m,k)|$ from $\mathcal{F}_3$, the upper bound $| \mathcal{N}(w) | \le | \mathcal{N}_{G_1}(w) | + | \mathcal{N}_{G_2}(w) | \le n^\lambda + enp_n$ on $\mathcal{F}_1(u,v)$, and the upper bound on $|R^c|$ on $\mathcal{F}_1 \cap \mathcal{F}_2(u,v)$ (see Lemma \ref{lemma:size-R}).
The final inequality $(b)$ holds for $n$ sufficiently large, since $m = \log n$.
To summarize, we have shown that if we define the event 
\[
\mathcal{F} : = \left \{ \forall k \in \{ 0, \ldots, m - 1 \}, | \mathcal{C}_H(m,k) \cap R | \ge (1 - 2 \epsilon) \frac{n}{m} \right \},
\]
then $\mathcal{F}_1 \cup \mathcal{F}_2(u,v) \cap \mathcal{F}_3 \subset \mathcal{F}$, and
\begin{equation}
\label{eq:F_probability}
\mathbb{P} ( \mathcal{F}^c) \le \mathbb{P}( \mathcal{F}_1^c) + \mathbb{P}( \mathcal{F}_2(u,v)^c) + \mathbb{P}( \mathcal{F}_3^c) = o(n^{-2}).
\end{equation}
A useful fact about $\mathcal{F}$ is that it depends only on $H$ and $F$, and is thus measurable with respect to the revealed information $\mathcal{I}$.

Next, we characterize the color list of $w$ in $G$.
To this end, let $X_0, \ldots, X_{m-1}$ be a collection of independent random variables, with 
$X_k \sim \mathrm{Bin} ( | \mathcal{C}_H(m,k) \cap R |, p_n)$.
In particular, $X_k$ is equal in distribution to the number of vertices in $\mathcal{N}(w) \cap R$ which are in the color class $\mathcal{C}_H(m,k)$.
Since the color with respect to $G$ of a vertex in $\mathcal{N}(w) \cap R$ is one more than its color with respect to $H$, it holds that
\[
\ell_G(w)_{k + 1} \stackrel{d}{=} \ell_H(w)_{k+1} + X_k, \qquad k \in \{ 0, \ldots, m - 1 \}.
\]
Due to this representation, we can study the probability that $\ell_G(w)$ takes on a particular value conditioned on $\mathcal{I}$. For any fixed $\mathbf{z} \in \{ 0, \ldots, n-1\}^m$, we have that 
\begin{align}
\mathbb{P} ( \ell_G(w) = \mathbf{z} \vert \mathcal{I} ) \mathbf{1}( \mathcal{F} ) & = \mathbb{P} \left( \left. \bigcap_{k = 0}^{m - 1} \{ X_k = z_{k + 1} - \ell_H(w)_{k + 1} \} \right \vert \mathcal{I} \right) \mathbf{1}( \mathcal{F} ) \nonumber  \\
& \stackrel{(c)}{=} \left \{ \prod_{k = 0}^{m-1} \mathbb{P}( X_k = z_{k + 1} - \ell_H(w)_{k + 1} \vert \mathcal{I} ) \right \} \mathbf{1}( \mathcal{F} ) \nonumber \\
& \stackrel{(d)}{\le}  \left \{ \prod_{k = 0}^{m-1} \left( \frac{e^4}{\sqrt{ | \mathcal{C}_H(m,k) \cap R | p_n}} \right) \right \} \mathbf{1}( \mathcal{F} ) \nonumber \\
\label{eq:color_list_probability_bound}
 & \stackrel{(e)}{\le} \left( \frac{ e^8 m}{ (1 - 2 \epsilon) np_n} \right)^{m/2},
\end{align}
where $(c)$ follows from the conditional independence of the $X_k$'s with respect to $\mathcal{I}$,
$(d)$ uses Lemma \ref{lemma:binomial-upper-bound} to bound the probability that a binomial random variable takes on a particular deterministic value, 
and $(e)$ uses the lower bound on $| \mathcal{C}_H(m,k) \cap R|$ given by $\mathcal{F}$.

Next, we study the probability that $\ell_G(w)$ is equal to $\ell_G(x)$, for some neighbor $x$ of $v$.
Since $x$ has no neighbors in $R$ by construction, the only potential neighbor of $x$ that could have a different color in $H$ and $G$ is $w$.
As a result, it holds that either $\ell_G(x) = \ell_H(x)$, or $\ell_G(x)$ and $\ell_H(x)$ differ by a unit transposition (i.e., the contribution of $w$ to $x$'s color list changes from one entry of the color list to another).
The number of potential color lists corresponding to $x$ in $G$, conditioned on $\mathcal{I}$, is therefore at most $1 + m(m-1) \le m^2$.
Taking a union bound over these possibilities and using the probability bound \eqref{eq:color_list_probability_bound} shows that
\begin{equation}
\label{eq:color_list_equality}
\mathbb{P} ( \ell_G(w) = \ell_G(x) \vert \mathcal{I} ) \mathbf{1}( \mathcal{F} ) \le m^2 \left( \frac{ e^8 m}{(1 - 2 \epsilon)np_n} \right)^{m/2}.
\end{equation}
Putting everything together, we can bound the probability of interest as follows:
\begin{align*}
\mathbb{P} ( \mathcal{F} \cap \{ \text{sig}(u) = \text{sig}(v) ) & \le \mathbb{P}  ( \mathcal{F} \cap \{ \forall w \in \mathcal{N}(u) , \exists x \in \mathcal{N}(v) : \ell_G(w) = \ell_G(x) \} ) \\
&  \stackrel{(f)}{\le} \E \left[ \mathbb{P} ( \exists x \in \mathcal{N}(v) : \ell_G(w) = \ell_G(x) \vert \mathcal{I} ) \mathbf{1}( \mathcal{F} ) \right ] \\
&  \stackrel{(g)}{\le} \E \left[ | \mathcal{N}(v) | m \left( \frac{4e^8 m}{(1 - 2 \epsilon)np_n} \right)^{m/2} \right ] \\
&  \stackrel{(h)}{\le} nm \left( \frac{4e^8 m}{(1 - 2 \epsilon)np_n} \right)^{m/2} \stackrel{(i)}{=} o(n^{-2}).
\end{align*}
Above, $(f)$ follows from picking a particular $w \in \mathcal{N}(u) \setminus ( \mathcal{N}(v) \cup \{v \})$ and revealing the corresponding information $\mathcal{I}$ (such a $w$ exists on the event $\mathcal{F}$),
$(g)$ is due to a union bound over $x \in \mathcal{N}(v)$, and by using the bound \eqref{eq:color_list_equality}.
The inequality $(h)$ uses $| \mathcal{N}(v) | \le n$,
and $(i)$ follows from our choice $m = \log n$ and $np_n = \omega (\log^2 n)$.

Moreover, since $\mathbb{P}( \mathcal{F}) = 1 - o(n^{-2})$ by \eqref{eq:F_probability}, a union bound implies
\[
\mathbb{P} ( \text{sig}(u) = \text{sig}(v) ) \le \mathbb{P}( \mathcal{F}^c) + \mathbb{P} ( \mathcal{F} \cap \{ \text{sig}(u) = \text{sig}(v) \} ) = o(n^{-2}).
\]
The desired result now follows from taking a union bound over all pairs of distinct vertices $u,v$.
\end{proof}

\begin{proof}[Proof of Theorem \ref{theorem:smoothed-analysis}, Part 2]
Fixing $u \neq v$, let $w$ be an arbitrary vertex from the set $\mathcal{N}(u) \setminus \left(\mathcal{N}(v) \setminus \{v\} \right)$. Define the set $R = R(u,v)$ identically to Section \ref{sec:smoothed-analysis-outline}.
To upper-bound $|R^c|$, we couple $G'$ to $G = G_1 \cup G_2$, such that (1) any edge in $G_1$ belongs to $G$, and belongs to $G'$ with probability $1-p_n$; (2) any edge in $G_2$ but not $G_1$ belongs to both $G$ and $G'$.
Due to the monotonicity of the coupling, the same upper bound on $|R^c|$ holds. 

Let $H'$ be the graph $G'$ formed by removing the edges in the set $\{ (w,w') : w' \in R \}$.
Using similar steps as in the proof of Proof of Theorem \ref{theorem:smoothed-analysis}, Part 1, we conclude that $\left|\mathcal{C}_{H'}(m,k) \cap R\right| \geq (1 - 2 \epsilon)\frac{n}{m}$ for all $k \in \{0,1, \dots, m-1\}$, for any fixed $\epsilon > 0$, under an event which occurs with probability $1-o(n^{-2})$. Finally, since the remaining random edges $\{(w,w') : w' \in R\}$ are not edges in $G_1$, the remainder of the proof is identical.
\end{proof}

To prove Lemma \ref{lemma:balanced-colors-semirandom}, we proceed similarly as in the proof of Lemma \ref{lemma:balanced-colors}, and study the expectation of the size of each color class. 

\begin{lemma}\label{lemma:balanced-colors-expectation-semirandom}
Suppose that $G_1 \in \cG(\lambda)$ for some $\lambda \in (0,1)$, let $G_2 \sim G(n, p_n)$, and let $G$ be the union of $G_1$ and $G_2$.
Let us further assume that $np_n \to \infty$, $n^{(2 \lambda + 1) / 3} p_n \to 0$, and $m = o(\sqrt{n p_n})$.
Then $\E [ | \cC_G(m,k) | ] = (1 \pm o(1)) n / m$ for all $k \in \{0, \ldots, m - 1 \}$ and $n$ large enough.
Furthermore, the same result holds if $G$ is the $\mathrm{XOR}$ of $G_1$ and $G_2$.
\end{lemma}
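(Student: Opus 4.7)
Fix $v \in [n]$ and let $d_v := \deg_{G_1}(v)$, so that $d_v \le n^\lambda$ since $G_1 \in \mathcal{G}_n(\lambda)$. In the union case, the non-$G_1$ edges incident to $v$ are independent $\mathrm{Bern}(p_n)$ variables, so
\[
\deg_G(v) = d_v + Y_v, \qquad Y_v \sim \mathrm{Bin}(n-1-d_v, p_n),
\]
and hence $\mathbb{P}(v \in \mathcal{C}_G(m,k)) = \mathbb{P}(Y_v \equiv k - d_v \pmod m)$. Writing $r_v := n-1-d_v$ and $\mu_v := r_v p_n$, the assumptions $\lambda<1$ and $np_n \to \infty$ give $\mu_v = (1-o(1))np_n \to \infty$ uniformly in $v$. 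Since $\mathbb{E}[|\mathcal{C}_G(m,k)|] = \sum_{v} \mathbb{P}(v \in \mathcal{C}_G(m,k))$, the plan reduces to showing that $\mathbb{P}(Y_v \equiv j \pmod m) = (1 \pm o(1))/m$ uniformly in $v$ and $j \in \{0, \ldots, m-1\}$.

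\textbf{Interval argument.} I would adapt the proof of Lemma \ref{lemma:balanced-colors-expectation}. Pick an integer interval $I_v \subset [\mu_v - 2\mu_v^{4/5}, \mu_v + 2\mu_v^{4/5}]$ whose smallest element is a multiple of $m$ and whose largest is one below a multiple of $m$; this is possible because $m = o(\sqrt{\mu_v}) = o(\mu_v^{4/5})$. Chebyshev's inequality gives
\[
\mathbb{P}(Y_v \notin I_v) = O\bigl(\mu_v^{-3/5}\bigr) = o(1/m),
\]
where the last step uses $m = o(\sqrt{\mu_v})$. For every $mk \in I_v$ and $i \in \{0, \ldots, m-1\}$, Lemma \ref{lemma:probability-ratio} applies (base point $mk = \mu_v(1 \pm o(1))$, shift $i \le m-1 = o(\sqrt{r_v p_n})$) to yield $\mathbb{P}(Y_v = mk+i) = (1 \pm o(1))\mathbb{P}(Y_v = mk)$. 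Running the summation step of Lemma \ref{lemma:balanced-colors-expectation} verbatim (with $r_v$, $\mu_v$ in place of $n-1$, $(n-1)p_n$) then gives
\[
\mathbb{P}\bigl(Y_v \in I_v,\, Y_v \equiv j \pmod m\bigr) = (1 \pm o(1))\,\frac{\mathbb{P}(Y_v \in I_v)}{m},
\]
uniformly in $j$, and combining with the tail bound produces $\mathbb{P}(Y_v \equiv j \pmod m) = (1 \pm o(1))/m$, as required.

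\textbf{XOR variant and conclusion.} For the XOR model, decompose $\deg_G(v) = Z_v^{(1)} + Z_v^{(2)}$, where $Z_v^{(1)} \sim \mathrm{Bin}(d_v, 1-p_n)$ records which $G_1$-edges survive and $Z_v^{(2)} \sim \mathrm{Bin}(n-1-d_v, p_n)$ records the newly introduced $G_2$-edges, with the two summands independent. Conditioning on $Z_v^{(1)} = j$ reduces the problem to the previous analysis applied to $Z_v^{(2)}$, yielding $\mathbb{P}(v \in \mathcal{C}_G(m,k) \mid Z_v^{(1)}=j) = (1 \pm o(1))/m$ uniformly in $j$, which averages to the same bound. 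Summing over $v \in [n]$ in either model gives $\mathbb{E}[|\mathcal{C}_G(m,k)|] = (1 \pm o(1))n/m$. The main obstacle is ensuring that all $(1 \pm o(1))$ error terms --- coming from Chebyshev, from Lemma \ref{lemma:probability-ratio}, and from the approximation $\mu_v = (1-o(1))np_n$ --- are genuinely \emph{uniform in $v$}, even though $r_v$, $\mu_v$, and the target residue $k-d_v \bmod m$ all depend on $v$ through $d_v \le n^\lambda$. The hypothesis $n^{(2\lambda+1)/3}p_n \to 0$ serves this purpose, keeping $d_v$ small relative to both $\mu_v$ and $\sqrt{\mu_v}$ throughout the computation.
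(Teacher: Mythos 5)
Your argument is correct. For the union case it is essentially the paper's proof: reduce to the statement that $\mathrm{Bin}(n-1-d_v,p_n)$ is equidistributed mod $m$ up to a factor $1\pm o(1)$, via an interval of nicely aligned endpoints around the mean, Lemma~\ref{lemma:probability-ratio} for the within-interval comparison, and Chebyshev for the tail (the paper uses half-width $(np_n)^{3/4}$ rather than your $\mu_v^{4/5}$, which is immaterial). For the XOR case, however, you take a genuinely different and in fact cleaner route: you decompose $\deg_{G'}(v)=Z_v^{(1)}+Z_v^{(2)}$ into independent binomials (surviving $G_1$-edges and fresh $G_2$-edges) and condition on $Z_v^{(1)}$, so the equidistribution of $Z_v^{(2)}$ mod $m$ — uniform over residue classes — immediately transfers to $\deg_{G'}(v)$. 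The paper instead couples $\deg_{G'}(v)$ with $\deg_G(v)$ using $\mathbb{P}(X_v\neq 0)\le d_vp_n$ for $X_v\sim\mathrm{Bin}(d_v,p_n)$, and this is precisely where the hypothesis $n^{(2\lambda+1)/3}p_n\to 0$ enters: it gives $d_vp_nm=o(1)$, i.e.\ $d_vp_n=o(1/m)$, so the coupling error is negligible at the $1/m$ scale. Your conditioning argument does not need that hypothesis at all (beyond guaranteeing $p_n\to 0$ for Lemma~\ref{lemma:probability-ratio}); one small inaccuracy is your closing remark attributing the role of $n^{(2\lambda+1)/3}p_n\to 0$ to uniformity in $v$ — for the union case uniformity only needs $d_v\le n^\lambda=o(n)$, and in your XOR route the hypothesis is simply not used. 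This does not affect the validity of the proof; if anything, your version shows the XOR conclusion under weaker assumptions.
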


\begin{proof}
Let us first consider the case where $G$ is the union of $G_1$ and $G_2$.
Fix a vertex $v \in [n]$, and define as a shorthand $d_v : = \deg_{G_1}(v)$.
Let us also define $I_v$ to be an interval such that
\[
[(n - 1 - d_v) p_n - (np_n)^{3/4}, (n - 1 - d_v) p_n + (np_n)^{3/4}] \subseteq I_v \subseteq [(n-1-d_v)p_n - 2(np_n)^{3/4}, (n - 1 - d_v)p_n + 2(np_n)^{3/4}],
\]
and so that the smallest element of $\mathbb{Z} \cap I_v$ is a multiple of $m$ and the largest element is one less than a multiple of $m$.
As in the proof of Lemma \ref{lemma:balanced-colors-expectation}, if $mk + i \in I_v$ for some $k \in \mathbb{Z}$ and $i \in \{0, \ldots, m - 1 \}$, then it also holds that $mk \in I_v$.
Next, noting that $\deg_G(v) - d_v \sim \mathrm{Bin}(n - 1 - d_v, p_n)$, we have for any $i \in \{0, \ldots, m - 1 \}$ that
\begin{align}
\mathbb{P}(\deg_G(v) - d_v \in I_v, v \in \cC_G(m,d_v + i) ) & = \sum_{k \in \mathbb{Z} : mk + i \in I_v} \mathbb{P}(\deg_G(v)-d_iv = mk + i) \nonumber \\
& = \sum_{k : mk \in I_v}  \mathbb{P}( \deg_G(v)-d_v = mk) \cdot \frac{ \mathbb{P} ( \deg_G(v)-d_v = mk + i) }{ \mathbb{P} ( \deg_G(v)-d_v = mk )} \nonumber  \\
& = (1 \pm o(1)) \sum_{k \in \mathbb{Z} : mk \in I_v} \mathbb{P}(\deg_G(v) - d_v = mk ) \nonumber \\
\label{eq:probability_C_m_i_smoothed}
& = (1 \pm o(1)) \mathbb{P}( \deg_G(v) - d_v \in I_v , v \in  \cC_G(m,d_v) ).
\end{align}
The equality on the second line uses that $mk + i \in I_v \implies mk \in I_v$ for $i \in \{0, \dots, m-1\}$. The equality in the third line follows from Lemma \ref{lemma:probability-ratio}, since $mk = (1 \pm o(1)) np_n$ and $i \le m = o ( \sqrt{n p_n})$. 
Since $\{ \cC_G(m,i) \}_{0 \le i \le m - 1}$ partitions all vertex degrees, \eqref{eq:probability_C_m_i_smoothed} implies that for any $j \in \{0, \ldots, m-1\}$, 
\[
\mathbb{P} ( \deg_G(v)-d_v \in I_v , v \in  \cC_G(m,j) ) = (1 \pm o(1)) \frac{\mathbb{P}( \deg_G(v)-d_v \in I_v) }{m}.
\]
It remains to characterize the probability in the numerator.
This can be done by Chebyshev's inequality:
\begin{equation}
\label{eq:degree_chebyshev}
\mathbb{P} ( \deg_G(v) - d_v \notin I_v ) \le \mathbb{P} ( | \deg_G(v) - (n - 1 - d_v) p_n| \ge (np_n)^{3/4} ) \le \frac{(n-1-d_v) p_n (1 - p_n)}{(np_n)^{3/2}} = o \left( \frac{1}{m} \right),
\end{equation}
which implies in particular that
\[
\mathbb{P}( v \in \cC_G(m,j) ) \ge \mathbb{P}( \deg_G(v) - d_v \in I_v, v \in \cC_G(m,j) ) = \frac{1 \pm o(1)}{m}.
\]
A matching upper bound can be obtained as follows:
\[
\mathbb{P} ( v \in \cC_G(m,j)) \le \mathbb{P} ( \deg_G(v) - d_v \in I_v, v \in \cC_G(m,j) ) + \mathbb{P} ( \deg_G(v) - d_v \notin I_v) = \frac{1 \pm o(1)}{m},
\]
where the final bound uses \eqref{eq:degree_chebyshev}.
It follows that $\E [ |\cC_G(m,j) | ] = (1 \pm o(1)) n / m$, as desired.

Next, we study the case where the graph is the $\mathrm{XOR}$ of $G_1$ and $G_2$.
We denote this graph by $G'$ to distinguish it from the union of $G_1$ and $G_2$, which was denoted by $G$ in the earlier part of the proof.
We can write $\deg_{G'}(v) = d_v - X_v + Y_v$, where $X_v \sim \mathrm{Bin}(d_v, p_n)$ denotes the number of
edges in $G_1$ incident to $v$ that become non-edges after the $\mathrm{XOR}$ with $G_2$, and $Y_v \sim \mathrm{Bin}(n - 1 - d_v, p_n)$ denotes the number of edges in $G_2$ incident to $v$ that are not already edges in $G_1$.
Notice in particular that $\mathbb{P} ( X_v \neq 0) \le d_v p_n$ by Markov's inequality, hence we can couple $\deg_{G'}(v)$ and $\deg_G(v)$ with probability $1 - d_v p_n$. Then
\begin{equation}
\label{eq:d_p_m_product}
d_v p_n m = o \left( n^{\lambda} p_n \sqrt{n p_n} \right) = o \left(  \left( n^{( 2 \lambda + 1) / 3} p_n \right)^{3/2} \right) = o(1).
\end{equation}
Above, the first equality follows since $G_1 \in \cG(\lambda)$ implies that $d_v \le n^{\lambda}$, and since $m = o ( \sqrt{n p_n})$;
the final equality is due to our assumption that $n^{(2 \lambda + 1) / 3} p_n = o(1)$.
A useful consequence of \eqref{eq:d_p_m_product} is that $d_v p_n = o(1/m)$.
Hence, for any $j \in \{0, \ldots, m-1 \}$,
\[
\left| \mathbb{P}( v \in \cC_{G'}(m,j) ) - \mathbb{P}(v \in \cC_G(m,j) ) \right| = o\left( \frac{1}{m} \right).
\]
In particular, $\mathbb{P}(v \in \cC_{G'}(m,j) ) = (1 \pm o(1)) / m$, and the desired result follows.
\end{proof}

\begin{proof}[Proof of Lemma \ref{lemma:balanced-colors-semirandom}]
The proof is nearly identical to the proof of Lemma \ref{lemma:balanced-colors} except for a few minor details, which we elaborate on below.

If $G$ is the union of $G_1$ and $G_2$, then $| \mathcal{C}_G(k) |$ is a function of the deterministic graph $G_1$ as well as the i.i.d. edge indicators of $G_2$. Moreover, since the presence or absence of the edge $(i,j)$ changes $| \mathcal{C}_G(k) |$ by at most 2, we may apply Lemma \ref{lemma:lipschitz_concentration} as in the proof of Lemma \ref{lemma:balanced-colors} to obtain the same probability bound \eqref{eq:color_set_concentration}. For convenience, we state the bound below:
\[
\mathbb{P} \left( \exists k \in \{0, \ldots, m - 1 \} : | | \mathcal{C}_G(k) | - \E [ | \mathcal{C}_G(k) |] | \ge \frac{\epsilon n}{m} \right) \le 2m \exp \left( - \frac{\epsilon^2}{5 m^2 p_n} \right).
\]
If $m = O ( \log n)$ and $p_n = o(1 / \log^3 n)$, then this bound is $o(n^{-2})$. The same arguments also hold in the case where $G$ is the XOR of $G_1$ and $G_2$.
\end{proof}

\section*{Acknowledgements}
A.S. acknowledges support from the U.S. National Science Foundation under Grants  CCF-1908308 and ECCS-2039716, as well as a grant from the C3.ai Digital Transformation Institute. We thank Lutz Warnke for observing that the method of typical bounded differences simplifies the proof of Lemma~\ref{lemma:balanced-colors} (and thus also the proof of Lemma~\ref{lemma:balanced-colors-semirandom}). We thank Aravindan Vijayaraghavan for helpful discussions, as well as the reviewers of an earlier version of this work who gave many helpful comments. 
\bibliography{references}

\begin{thebibliography}{10}

\bibitem{adhikari2022shotgun}
Kartick Adhikari and Sukrit Chakraborty.
\newblock Shotgun assembly of random geometric graphs.
\newblock Preprint available at \url{https://arxiv.org/abs/2202.02968}, 2022.

\bibitem{Angluin1980local}
Dana Angluin.
\newblock Local and global properties in networks of processors.
\newblock In {\em Proceedings of the Twelfth Annual ACM Symposium on Theory of
  Computing (STOC)}, page 82–93, 1980.

\bibitem{babai1980complexity}
L\'{a}szl\'{o} Babai.
\newblock On the complexity of canonical labeling of strongly regular graphs.
\newblock {\em SIAM Journal on Computing}, 9(1):212--216, 1980.

\bibitem{Babai2016}
L{\'a}szl{\'o} Babai.
\newblock Graph isomorphism in quasipolynomial time.
\newblock In {\em Proceedings of the 48th Annual ACM Symposium on Theory of
  Computing (STOC)}, pages 684--697, 2016.

\bibitem{babai2018group}
L{\'a}szl{\'o} Babai.
\newblock Group, graphs, algorithms: the graph isomorphism problem.
\newblock In {\em Proceedings of the International Congress of Mathematicians:
  Rio de Janeiro 2018}, pages 3319--3336. World Scientific, 2018.

\bibitem{babai1980random}
L{\'a}szl{\'o} Babai, Paul Erd\H{o}s, and Stanley~M. Selkow.
\newblock Random graph isomorphism.
\newblock {\em SIAM Journal on Computing}, 9(3):628--635, 1980.

\bibitem{babai1979canonical}
L{\'a}szl{\'o} Babai and Ludik Ku\v{c}era.
\newblock Canonical labelling of graphs in linear average time.
\newblock In {\em Proceedings of the 20th Annual Symposium on Foundations of
  Computer Science (FOCS)}, pages 39--46. IEEE, 1979.

\bibitem{babai1983canonical}
L\'{a}szl\'{o} Babai and Eugene~M. Luks.
\newblock Canonical labeling of graphs.
\newblock In {\em Proceedings of the Fifteenth Annual ACM Symposium on Theory
  of Computing (STOC)}, page 171–183, 1983.

\bibitem{bohman2003many}
Tom Bohman, Alan Frieze, and Ryan Martin.
\newblock {How many random edges make a dense graph Hamiltonian?}
\newblock {\em Random Structures \& Algorithms}, 22(1):33--42, 2003.

\bibitem{Bollobas1982}
B{\'e}la Bollob{\'a}s.
\newblock Distinguishing vertices of random graphs.
\newblock In {\em Graph Theory: Proceedings of the Conference on Graph Theory,
  Cambridge}, volume~13 of {\em Annals of Discrete Mathematics}, pages 33--49.
  North-Holland, 1982.

\bibitem{Bollobas1981}
Bela Bollobás.
\newblock The diameter of random graphs.
\newblock {\em Transactions of the American Mathematical Society},
  267(1):41--52, 1981.

\bibitem{bollobas2001random}
Béla Bollobás.
\newblock {\em {Random Graphs}}.
\newblock Cambridge University Press, second edition, 2001.

\bibitem{Cardon1982}
Alain Cardon and Maxime Crochemore.
\newblock {Partitioning a graph in $O (|A| \log_2|V|)$}.
\newblock {\em Theoretical Computer Science}, 19(1):85--98, 1982.

\bibitem{chen2021polynomial}
Xi~Chen, Anindya De, Chin~Ho Lee, Rocco~A Servedio, and Sandip Sinha.
\newblock Polynomial-time trace reconstruction in the smoothed complexity
  model.
\newblock In {\em Proceedings of the 2021 ACM-SIAM Symposium on Discrete
  Algorithms (SODA)}, pages 54--73, 2021.

\bibitem{Czajka2008}
Tomek Czajka and Gopal Pandurangan.
\newblock Improved random graph isomorphism.
\newblock {\em Journal of Discrete Algorithms}, 6(1):85--92, 2008.

\bibitem{dai2019analysis}
Osman~Emre Dai, Daniel Cullina, Negar Kiyavash, and Matthias Grossglauser.
\newblock {Analysis of a canonical labeling algorithm for the alignment of
  correlated Erd\H{o}s-R{\'e}nyi graphs}.
\newblock {\em Proceedings of the ACM on Measurement and Analysis of Computing
  Systems}, 3(2):1--25, 2019.

\bibitem{ding2022shotgun}
Jian Ding, Yiyang Jiang, and Heng Ma.
\newblock {Shotgun threshold for sparse Erdős-Rényi graphs}.
\newblock Preprint available at \url{https://arxiv.org/abs/2208.09876}, 2022.

\bibitem{Ding2021}
Jian Ding, Zongming Ma, Yihong Wu, and Jiaming Xu.
\newblock Efficient random graph matching via degree profiles.
\newblock {\em Probability Theory and Related Fields}, 179(1):29--115, 2021.

\bibitem{feige2007refuting}
Uriel Feige.
\newblock Refuting smoothed {3CNF} formulas.
\newblock In {\em Proceedings of the 48th Annual IEEE Symposium on Foundations
  of Computer Science (FOCS)}, pages 407--417. IEEE, 2007.

\bibitem{Fraigniaud2018local}
Pierre Fraigniaud, Juho Hirvonen, and Jukka Suomela.
\newblock Node labels in local decision.
\newblock {\em Theoretical Computer Science}, 751:61--73, 2018.

\bibitem{freedman}
David~A. Freedman.
\newblock {On Tail Probabilities for Martingales}.
\newblock {\em The Annals of Probability}, 3(1):100 -- 118, 1975.

\bibitem{ganassali2020tree}
Luca Ganassali and Laurent Massouli\'e.
\newblock From tree matching to sparse graph alignment.
\newblock In {\em Proceedings of the Thirty Third Conference on Learning Theory
  (COLT)}, volume 125 of {\em Proceedings of Machine Learning Research}, pages
  1633--1665. PMLR, 2020.

\bibitem{gml2022correlation}
Luca Ganassali, Laurent Massoulié, and Marc Lelarge.
\newblock {Correlation Detection in Trees for Planted Graph Alignment}.
\newblock In {\em 13th Innovations in Theoretical Computer Science Conference
  (ITCS 2022)}, pages 74:1--74:8, 2022.

\bibitem{gaudio2022shotgun}
Julia Gaudio and Elchanan Mossel.
\newblock {Shotgun assembly of Erdős-Rényi random graphs}.
\newblock {\em Electronic Communications in Probability}, 27:1 -- 14, 2022.

\bibitem{GRS22b}
Julia Gaudio, Mikl{\'o}s~Z R{\'a}cz, and Anirudh Sridhar.
\newblock Local canonical labeling of {Erd\H{o}s--R\'enyi} random graphs.
\newblock Preprint available at \url{https://arxiv.org/abs/2211.16454v1}, 2022.

\bibitem{Goos2013local}
Mika G\"{o}\"{o}s, Juho Hirvonen, and Jukka Suomela.
\newblock Lower bounds for local approximation.
\newblock {\em Journal of the ACM}, 60(5):39:1--23, 2013.

\bibitem{Grohe2020}
Martin Grohe and Pascal Schweitzer.
\newblock The graph isomorphism problem.
\newblock {\em Communications of the ACM}, 63(11):128--134, 2020.

\bibitem{guruswami2022algorithms}
Venkatesan Guruswami, Pravesh~K Kothari, and Peter Manohar.
\newblock {Algorithms and Certificates for Boolean CSP Refutation: Smoothed is
  No Harder Than Random}.
\newblock In {\em Proceedings of the 54th Annual ACM Symposium on Theory of
  Computing (STOC)}, pages 678--689, 2022.

\bibitem{Hasemann2016local}
Henning Hasemann, Juho Hirvonen, Joel Rybicki, and Jukka Suomela.
\newblock Deterministic local algorithms, unique identifiers, and fractional
  graph colouring.
\newblock {\em Theoretical Computer Science}, 610:204--217, 2016.

\bibitem{Helfgott2017}
Harald~Andr{\'e}s Helfgott, Jitendra Bajpai, and Daniele Dona.
\newblock Graph isomorphisms in quasi-polynomial time.
\newblock Preprint available at \url{https://arxiv.org/abs/1710.04574}, 2017.

\bibitem{Huang2021}
Han Huang and Konstantin Tikhomirov.
\newblock Shotgun assembly of unlabeled {Erd\H{o}s-R\'enyi} graphs.
\newblock Preprint available at \url{https://arxiv.org/abs/2108.09636}, 2021.

\bibitem{johnston2022shotgun}
Tom Johnston, Gal Kronenberg, Alexander Roberts, and Alex Scott.
\newblock Shotgun assembly of random graphs.
\newblock Preprint available at \url{https://arxiv.org/abs/2211.14218}, 2022.

\bibitem{karp1979probabilistic}
Richard~M. Karp.
\newblock Probabilistic analysis of a canonical numbering algorithm for graphs.
\newblock In {\em Proceedings of the AMS Symposium in Pure Mathematics},
  volume~34, pages 365--378, 1979.

\bibitem{krivelevich2017bounded}
Michael Krivelevich, Matthew Kwan, and Benny Sudakov.
\newblock Bounded-degree spanning trees in randomly perturbed graphs.
\newblock {\em SIAM Journal on Discrete Mathematics}, 31(1):155--171, 2017.

\bibitem{krivelevich2006smoothed}
Michael Krivelevich, Benny Sudakov, and Prasad Tetali.
\newblock On smoothed analysis in dense graphs and formulas.
\newblock {\em Random Structures \& Algorithms}, 29(2):180--193, 2006.

\bibitem{Linial2017}
Nati Linial and Jonathan Mosheiff.
\newblock On the rigidity of sparse random graphs.
\newblock {\em Journal of Graph Theory}, 85(2):466--480, 2017.

\bibitem{lipton1978beacon}
Richard~J. Lipton.
\newblock {The Beacon Set Approach to Graph Isomorphism}.
\newblock Technical Report 135, Yale University, Department of Computer
  Science, 1978.

\bibitem{makarychev2014constant}
Konstantin Makarychev, Yury Makarychev, and Aravindan Vijayaraghavan.
\newblock {Constant Factor Approximation for Balanced Cut in the PIE Model}.
\newblock In {\em Proceedings of the 46th Annual ACM Symposium on Theory of
  Computing (STOC)}, pages 41--49, 2014.

\bibitem{mao2021random}
Cheng Mao, Mark Rudelson, and Konstantin Tikhomirov.
\newblock {Random Graph Matching with Improved Noise Robustness}.
\newblock In {\em Proceedings of Thirty Fourth Conference on Learning Theory
  (COLT)}, volume 134 of {\em Proceedings of Machine Learning Research}, pages
  3296--3329. PMLR, 2021.

\bibitem{mao2022exact}
Cheng Mao, Mark Rudelson, and Konstantin Tikhomirov.
\newblock {Exact Matching of Random Graphs with Constant Correlation}.
\newblock Preprint available at \url{https://arxiv.org/abs/2110.05000}, 2022.

\bibitem{mao2022otter}
Cheng Mao, Yihong Wu, Jiaming Xu, and Sophie~H. Yu.
\newblock {Random graph matching at Otter's threshold via counting
  chandeliers}.
\newblock Preprint available at \url{https://arxiv.org/abs/2209.12313}, 2022.

\bibitem{mitzenmacher2017probability}
Michael Mitzenmacher and Eli Upfal.
\newblock {\em {Probability and Computing: Randomization and Probabilistic
  Techniques in Algorithms and Data Analysis}}.
\newblock Cambridge University Press, second edition, 2017.

\bibitem{Morgan1965}
Harry~L. Morgan.
\newblock {The Generation of a Unique Machine Description for Chemical
  Structures-a Technique Developed at Chemical Abstracts Service}.
\newblock {\em Journal of Chemical Documentation}, 5(2):107--113, 1965.

\bibitem{Mossel2019}
Elchanan Mossel and Nathan Ross.
\newblock Shotgun assembly of labeled graphs.
\newblock {\em IEEE Transactions on Network Science and Engineering},
  6(2):145--157, 2019.

\bibitem{mossel2015shotgun}
Elchanan Mossel and Nike Sun.
\newblock Shotgun assembly of random regular graphs.
\newblock Preprint available at \url{https://arxiv.org/abs/1512.08473}, 2015.

\bibitem{Pedarsani2011}
Pedram Pedarsani and Matthias Grossglauser.
\newblock On the privacy of anonymized networks.
\newblock In {\em Proceedings of the 17th ACM SIGKDD International Conference
  on Knowledge Discovery and Data Mining (KDD)}, pages 1235--1243, 2011.

\bibitem{piccioli2022aligning}
Giovanni Piccioli, Guilhem Semerjian, Gabriele Sicuro, and Lenka Zdeborov\'a.
\newblock Aligning random graphs with a sub-tree similarity message-passing
  algorithm.
\newblock {\em Journal of Statistical Mechanics: Theory and Experiment}, 2022.

\bibitem{Read1972}
Ronald~C. Read.
\newblock The coding of various kinds of unlabeled trees.
\newblock In {\em Graph Theory and Computing}, pages 153--182. Elsevier, 1972.

\bibitem{Schoning1988}
Uwe Sch{\"o}ning.
\newblock Graph isomorphism is in the low hierarchy.
\newblock {\em Journal of Computer and System Sciences}, 37(3):312--323, 1988.

\bibitem{spielman2001smoothed}
Daniel Spielman and Shang-Hua Teng.
\newblock {Smoothed Analysis of Algorithms: Why the Simplex Algorithm Usually
  Takes Polynomial Time}.
\newblock In {\em Proceedings of the 33rd Annual ACM Symposium on Theory of
  Computing (STOC)}, pages 296--305, 2001.

\bibitem{spielman2004smoothed}
Daniel~A Spielman and Shang-Hua Teng.
\newblock {Smoothed Analysis of Algorithms: Why the Simplex Algorithm Usually
  Takes Polynomial Time}.
\newblock {\em Journal of the ACM (JACM)}, 51(3):385--463, 2004.

\bibitem{Suomela2013}
Jukka Suomela.
\newblock Survey of local algorithms.
\newblock {\em ACM Computing Surveys}, 45(2):24:1--40, 2013.

\bibitem{Thorup1999}
Mikkel Thorup.
\newblock Undirected single-source shortest paths with positive integer weights
  in linear time.
\newblock {\em Journal of the ACM}, 46(3):362--394, 1999.

\bibitem{warnke_2016}
Lutz Warnke.
\newblock On the method of typical bounded differences.
\newblock {\em Combinatorics, Probability and Computing}, 25(2):269–299,
  2016.

\bibitem{Leman1968}
Boris Weisfeiler and Andrei~A. Leman.
\newblock A reduction of a graph to a canonical form and an algebra arising
  during this reduction.
\newblock {\em Nauchno-Technicheskaya Informatsiya}, 2(9):12--16, 1968.

\bibitem{yartseva2016assembling}
Lyudmila Yartseva, Jefferson~Elbert Simões, and Matthias Grossglauser.
\newblock Assembling a network out of ambiguous patches.
\newblock In {\em 2016 54th Annual Allerton Conference on Communication,
  Control, and Computing (Allerton)}, pages 883--890, 2016.

\end{thebibliography}
\bibliographystyle{plain}

\end{document}